\newtheorem{theorem}{Theorem}[section]
\newtheorem{proposition}[theorem]{Proposition}
\newtheorem{lemma}[theorem]{Lemma}
\theoremstyle{remark}
\theoremstyle{definition}
\newtheorem{definition}[theorem]{Definition}
\newtheorem{remark}[theorem]{Remark}
\numberwithin{equation}{section}
\numberwithin{figure}{section}
\numberwithin{table}{section}
\numberwithin{equation}{section}
\numberwithin{figure}{section}
\numberwithin{table}{section}
\def\GG{\mathrm{G}}
\def\d{\mathrm{d}}
\DeclareMathOperator\Spin{Spin}
\begin{document}

\title{Geometric flows of  G\textsubscript{2}-structures on
3-Sasakian 7-manifolds}

\author{Aaron Kennon}

\address{Department of Physics UCSB}
\email{akennon@physics.ucsb.edu}

\author{Jason D. Lotay}
\address{Mathematical Institute, University of Oxford}
\email{lotay@maths.ox.ac.uk}

\date{\today}

\thanks{The research of the first 
author is partially supported by Simons Foundation Award \#488629 (Morrison).  The research of the second author is partially supported by the Simons Collaboration on Special Holonomy in Geometry, Analysis, and Physics (\#724071 Jason Lotay).
}


\begin{abstract}
A 3-Sasakian structure on a 7-manifold may be used to define two distinct Einstein metrics: the  3-Sasakian metric and the squashed Einstein metric. Both  metrics are induced by nearly parallel $\GG_2$-structures which may also be expressed in terms of the 3-Sasakian structure. Just as Einstein metrics are  critical points for the Ricci flow up to rescaling, nearly parallel $\GG_2$-structures provide natural critical points of the (rescaled) geometric flows of $\GG_2$-structures known as the Laplacian flow and Laplacian coflow.  We study each of these flows in the 3-Sasakian setting and see that their behaviour is markedly different, particularly regarding the stability of the nearly parallel $\GG_2$-structures.  We also compare the behaviour of the flows of $\GG_2$-structures with the (rescaled) Ricci flow.
\end{abstract}

\maketitle
\setcounter{tocdepth}{1}
\tableofcontents

\section{Introduction}

\subsection{Nearly parallel \texorpdfstring{G\textsubscript{2}}{G2}-structures} A $\GG_2$-structure on a 7-manifold is encoded by a 3-form $\varphi$ satisfying a certain nondegeneracy condition, and such a 3-form determines a Riemannian metric and orientation.  One of the most important types of  $\GG_2$-structure is a \emph{nearly parallel} $\GG_2$-structure
%
since it defines an Einstein metric with positive scalar curvature, as well as a real Killing spinor \cite{Bar,FKMS}.  Moreover, the cone over a 7-manifold with a nearly parallel $\GG_2$-structure admits a conical metric with exceptional holonomy $\Spin(7)$ (and so is Ricci-flat), and thus nearly parallel $\GG_2$-structures are also important in the study of asymptotically conical and conically-singular $\Spin(7)$ manifolds (cf.~\cite{Lehmann}).


Since the existence of a complete positive Einstein metric will lead to compactness of the underlying manifold by Myers theorem, it is natural to ask which compact 7-manifolds admit nearly parallel $\GG_2$-structures.   Though this general question is currently open, an infinite number of examples of such compact 7-manifolds are known, including the 7-sphere, the Aloff--Wallach spaces $N(k,l)$, the Berger space SO(5)/SO(3), and the Stiefel manifold V$_{5,2}$ \cite{FKMS}.  The largest class of 7-manifolds that are known to admit nearly parallel $\GG_2$-structures are the \emph{$3$-Sasakian} 7-manifolds, which are the focus of this paper.  

\subsection{Geometric flows} 
Nearly parallel $\GG_2$-structures are natural to study from the perspective of several geometric flows. Since a nearly parallel $\GG_2$-structure induces a positive Einstein metric, it is natural to evolve its induced metric $g$ by the \emph{Ricci flow}:
\begin{equation}\label{eq:Ric.flow}
\frac{\partial g}{\partial t}=-2\mathrm{Ric}(g).
\end{equation}
The induced metric will define a self-similarly \emph{shrinking} solution to the Ricci flow, and thus a critical point after rescaling.  
%
%
However, a $\GG_2$-structure contains more information than the metric (since the same metric is induced by a whole family of $\GG_2$-structures), so it is worthwhile to examine flows of $\GG_2$-structures relevant to nearly parallel $\GG_2$-structures, and compare and contrast its behaviour to the Ricci flow.

Two such geometric flows of $\GG_2$-structures which have been the most studied, and we shall examine here, are the \emph{Laplacian flow} (introduced by Bryant \cite{Bryant}) and the \emph{Laplacian coflow} (first considered in \cite{KMT}\footnote{It should be noted that  in \cite{KMT} the opposite sign for the velocity of the Laplacian coflow is used.}).  

\subsubsection{Laplacian flow} The Laplacian flow evolves the 3-form $\varphi$ defining the $\GG_2$-structure by its Hodge Laplacian:
\begin{equation}\label{eq:Lap.flow}
\frac{\partial\varphi}{\partial t} = \Delta_\varphi \varphi=(\d\d^*_{\varphi}\varphi+\d^*_{\varphi}\d)\varphi.
\end{equation}
(Here, we emphasise the nonlinearity in the formal adjoint $\d^*_{\varphi}$ of the exterior derivative, since the metric and orientation depend on $\varphi$.)
The Laplacian flow has received particular attention in the context of \emph{closed} $\GG_2$-structures (when $\d\varphi=0$), where it has many attractive features, particularly with regards to torsion-free $\GG_2$-structures (when $\d\varphi=0$ and $\d^*_{\varphi}\varphi=0$), which define Ricci-flat metrics with holonomy contained in $\GG_2$.  For foundational results and a survey of recent developments in the Laplacian flow for closed $\GG_2$-structures see e.g.~\cite{KLL,LotayWei1,LotayWei2}.

A nearly parallel $\GG_2$-structure defines a self-similarly \emph{expanding} solution to the Laplacian flow \eqref{eq:Lap.flow}, so can be viewed as a critical point up to rescaling.  (We note the immediate difference with the Ricci flow where the induced metric was a shrinker.) 
A nearly parallel $\GG_2$-structure is, however, not closed but \emph{coclosed}: the defining 3-form $\varphi$ satisfies $\d^*_{\varphi}\varphi=0$. Whilst it may seem potentially plausible to study coclosed $\GG_2$-structures using the Laplacian flow \eqref{eq:Lap.flow}, in fact it is not yet known in general whether this flow even has short time existence starting at a coclosed $\GG_2$-structure.  An example situation where it has proved instructive to use the Laplacian flow to study coclosed $\GG_2$-structures can be found in \cite{LSS}.  

\subsubsection{Laplacian coflow} Currently the best candidate\footnote{The Laplacian coflow for coclosed $\GG_2$-structures has many attractive features analogous to the Laplacian flow for closed $\GG_2$-structures, but with the significant difference that the analytic foundations for the Laplacian coflow are currently lacking: see \cite{Grigorian,KLL} for a discussion of the  analytic issues.} for studying coclosed $\GG_2$-structures is the Laplacian coflow, which evolves the closed 4-form $\psi=*_{\varphi}\varphi$ dual to the 3-form $\varphi$ defining the $\GG_2$-structure by its Hodge Laplacian:
\begin{equation}\label{eq:Lap.coflow}
\frac{\partial \psi}{\partial t} = \Delta_\psi\psi=(\d^*_{\psi}\d+\d\d^*_{\psi})\psi=\d\d^*_{\psi}\psi,
\end{equation}
using the fact that $\psi$ is closed.  (The 4-form $\psi$ induces the metric just like $\varphi$, but not the orientation, though an orientation can be fixed by the initial choice of $\GG_2$-structure.)  The Laplacian coflow   preserves the cohomology class $[\psi]$ of $\psi$, where it may be viewed as the gradient flow of the Hitchin volume functional, and the induced flow of the metric $g$ defined by $\psi$ is
\begin{equation}\label{eq:Ric.flow.psi}
    \frac{\partial g}{\partial t}=-2\mathrm{Ric}(g)+Q(\d\varphi),
\end{equation}
where $Q$ is a quadratic expression in $\d\varphi$: see \cite{Grigorian,KLL} for details.  Since $Q$ only depends on first order information on $\psi$, whereas the Ricci tensor involves second order data, one may view \eqref{eq:Ric.flow.psi} as a lower order perturbation of the Ricci flow \eqref{eq:Ric.flow}.  

However, just as for the Laplacian flow, a nearly parallel $\GG_2$-structure defines a self-similarly \emph{expanding} solution to the Laplacian coflow \eqref{eq:Lap.coflow}, whereas its induced metric defines a shrinker for Ricci flow.  Hence the ``lower order terms'' in \eqref{eq:Ric.flow.psi} drastically alter the behaviour of the metric flow in this setting.
 
We should also note that coclosed $\GG_2$-structures satisfy a parametric h-principle (see \cite{CN}). Therefore, coclosed G$_{2}$-structures exist on any (compact or non-compact) 7-manifold admitting a $\GG_2$-structure, which just requires the 7-manifold to be oriented and spin, and so the Laplacian coflow can potentially be studied on any oriented spin 7-manifold.  By contrast, it is currently not clear how restrictive the closed condition is for a $\GG_2$-structure on a compact manifold. 

\subsection{3-Sasakian 7-manifolds} A 3-Sasakian 7-manifold is a Riemannian 7-manifold $M$ so that the metric cone over it is hyperk\"ahler. One can use the 3-Sasakian structure to define two\footnote{In fact, there are three natural nearly parallel $\GG_2$-structures inducing the 3-Sasakian metric, but these are permuted by the symmetries in the 3-Sasakian structure.  The same does \emph{not} occur for the squashed Einstein metric.} distinct nearly parallel $\GG_2$-structures (up to scale), one of which induces the original 3-Sasakian Einstein metric on $M$, and the other induces the so-called squashed Einstein metric on $M$.
This is most easily seen in the example of the 7-sphere, where the 3-Sasakian metric is the round metric, and the squashed Einstein metric is obtained by rescaling the 3-sphere fibres relative to the 4-sphere base in the Hopf fibration of the 7-sphere.    

\subsection{Stability} Our primary goal is to study the stability of   nearly parallel G$_{2}$-structures on 3-Sasakian 7-manifolds under the Laplacian flow and Laplacian coflow, and to compare the behavior of these flows to the Ricci flow near their induced Einstein metrics. 

For geometric flows, one is primarily interested in the question of dynamical stability of a critical point, i.e.~when the flow starting near a critical point will flow back to it.  An easier and weaker thing to check is linear stability:  whether the critical point is stable for the linearized flow at that point.   
In some situations, one can infer dynamical stability from linear stability: e.g.~for complete positive Einstein metrics in Ricci flow, linear stability plus an integrability assumption implies a weak form of dynamical stability \cite{Kroncke}. 

In the context of nearly parallel $\GG_2$-structures on 7-manifolds $M$, it was shown in \cite{SWW} that  if the third Betti number $b^3(M)\neq 0$, then under the Ricci flow any Einstein metric induced by a nearly parallel $\GG_2$ structure is linearly unstable and therefore  dynamically unstable. As 3-Sasakian 7-manifolds $M$ necessarily have $b^3(M)=0$, this class of examples admitting nearly parallel $\GG_2$-structures is particularly interesting for  Ricci flow in light of this result.

In this article, when discussing stability we will always be referring to dynamical stability.


\subsection{Main results} On any 3-Sasakian 7-manifold we introduce two disjoint 3-parameter families of coclosed $\GG_2$-structures defined in terms of the 3-Sasakian structure.  These families of $\GG_2$-structures each include exactly one of the natural nearly parallel $\GG_2$-structures we discussed above (and their rescalings).  We refer the reader to \S2 for details.  

Our main results concern the behaviour of the Laplacian coflow, the Laplacian flow and the Ricci flow for these families of coclosed $\GG_2$-structures and their induced metrics, which we show are preserved by the flows.  (Note, in particular, that the Laplacian flow is shown to preserve the coclosed condition in this setting.) 

Our most significant result is for the Laplacian coflow \eqref{eq:Lap.coflow}.

\begin{theorem}\label{thm:coflow}
The Laplacian coflow starting at any initial coclosed $\GG_2$-structure in either of our families converges, after rescaling, to the nearly parallel $\GG_2$-structure in that family.  In particular, the nearly parallel $\GG_2$-structures are both stable within their families.
\end{theorem}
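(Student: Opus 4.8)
\textit{Proof plan.} The strategy is to reduce the Laplacian coflow \eqref{eq:Lap.coflow} to a finite-dimensional system of ordinary differential equations on the parameters defining our families, and then to analyse the resulting dynamical system.

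First I would set up the ansatz. A member of one of our families is a closed $4$-form $\psi = \psi_{f_1,f_2,f_3}$ assembled from the $3$-Sasakian data --- the three contact $1$-forms $\eta_i$, the horizontal (hyperk\"ahler) $2$-forms $\omega_i$, and the horizontal volume form $\mathrm{vol}_{\mathcal H}$ --- with coefficients given by positive functions $f_1(t),f_2(t),f_3(t)$. Feeding this into the flow, one computes the induced metric $g_\psi$ and Hodge star $*_\psi$ (explicit rational expressions in the $f_i$), hence $\d^*_\psi\psi = -*_\psi\,\d\,*_\psi\psi$ and finally $\Delta_\psi\psi = \d\,\d^*_\psi\psi$; the only non-algebraic input is the $3$-Sasakian structure equations, which express $\d\eta_i$ and $\d\omega_i$ in terms of the $\omega_i$, the $\eta_j\wedge\eta_k$ and the $\eta_j\wedge\omega_k$, together with the standard algebraic relations among the horizontal forms on a $4$-dimensional hyperk\"ahler space. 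The crucial structural observation --- which simultaneously proves that each family is preserved by the coflow, as asserted in the statement --- is that $\Delta_\psi\psi$ is again a $4$-form of exactly the ansatz type. Thus \eqref{eq:Lap.coflow} becomes an autonomous first-order system $\dot f_i = F_i(f_1,f_2,f_3)$ on the positive octant, with short-time existence and uniqueness immediate.

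Second, I would account for the scaling. A nearly parallel $\GG_2$-structure is a self-similar expander for \eqref{eq:Lap.coflow}, hence not a genuine fixed point; but the ODE system is homogeneous under $\psi\mapsto\lambda\psi$ (with weights dictated by how $*_\psi$ scales), so passing to scale-invariant ratios of the $f_i$ --- equivalently, normalising by total volume --- reduces the analysis to a dynamical system on a lower-dimensional quotient on which the nearly parallel structure in the family is an isolated fixed point. Linearising the normalised system there and checking that the spectrum lies in the open left half-plane gives linear stability, a routine eigenvalue computation. It then remains to show that \emph{every} trajectory of the normalised system starting in the open parameter region converges to this fixed point: here I would seek a Lyapunov function for the normalised flow --- natural candidates being the (suitably normalised) Hitchin volume functional, for which the coflow is a gradient flow in a fixed cohomology class, or an energy built from the intrinsic torsion $|\d\varphi|^2$ --- or, if the reduced system is simple enough after a change of variables, integrate it explicitly; one must also exclude trajectories running to the boundary of the positive region (collapse or blow-up of the horizontal or vertical distributions) in finite or infinite time, i.e.\ establish two-sided a priori bounds on the normalised $f_i$. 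Unwinding the normalisation then gives convergence of the rescaled coflow to the nearly parallel $\GG_2$-structure, and hence its stability within the family.

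The step I expect to be the main obstacle is precisely this global convergence: the reduction to ODEs is long but mechanical, and linear stability is a finite computation, but showing that every normalised trajectory limits onto the nearly parallel point --- with no finite-time degeneration and no competing attractors or periodic orbits --- requires either an explicit solution of the reduced system or a monotone quantity tailored to it, and extracting one from the geometry is the real work.
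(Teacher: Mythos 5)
Your overall skeleton coincides with the paper's: reduce the coflow to an ODE system for the coefficient functions (checking that $\Delta_\psi\psi$ stays in the ansatz, so the family is preserved), pass to scale-invariant variables and a rescaled time so that the nearly parallel structure becomes an interior fixed point, verify linear stability by an eigenvalue computation, and then argue global convergence. Up to and including the linearization this is exactly what the paper does (with variables $X=a^2/c^2$, $Y=ab/c^2$ and $\d s/\d t = 1/c^2$), and your remark that the gradient-flow structure rules out periodic orbits is also how the paper uses the Hitchin functional.

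The gap is that the step you yourself flag as the main obstacle --- global convergence --- is left as a wish-list, and the specific routes you propose are not the ones that actually close it. The Hitchin volume is monotone along the unrescaled coflow in a fixed cohomology class, but it does not obviously descend to a Lyapunov function on the normalized $(X,Y)$-system, and the reduced equations are not explicitly integrable; neither is $|\d\varphi|^2$ shown to be monotone here. What is really needed, and what constitutes the bulk of the paper's proof, is a concrete phase-plane analysis: two-sided a priori bounds obtained from the sign of $\dot X$ and $\dot Y$ relative to the nullclines $\gamma_X,\gamma_Y$; a Gr\"onwall comparison argument ruling out $Y\to\infty$ while $X$ stays in a bounded interval (separately for $\epsilon=\pm1$, using $\d Y/\d X \leq C(X)Y$); and a degenerate linearization near the boundary critical point $(X,Y)=(1/2,0)$ (the nearly K\"ahler twistor collapse) to show $Y$ stays bounded away from $0$. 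Only with these bounds in hand does ``gradient flow, hence no periodic orbits, hence convergence to the unique interior critical point'' apply, and the translation back from $s$ to $t$ also requires checking that $c^2$ grows at most linearly. Without carrying out these estimates the proposal establishes linear stability only, not the global convergence asserted in the theorem.
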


Comparing the Laplacian flow \eqref{eq:Lap.flow} and Laplacian coflow \eqref{eq:Lap.coflow}, one might naively expect them to have similar behaviour as their velocities are Hodge dual.  However, in our setting, we have the following, which contrasts sharply with our Laplacian coflow result.

\begin{theorem}\label{thm:flow} 
Both nearly parallel $\GG_2$-structures are unstable sources within their families under the rescaled Laplacian flow, so coclosed $\GG_2$-structure in our families which are not nearly parallel cannot flow to either of them.  
\end{theorem}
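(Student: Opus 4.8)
The plan is to reduce the Laplacian flow on either 3-parameter family to a system of ODEs, using the same philosophy as for Theorem~\ref{thm:coflow}: because the families of coclosed $\GG_2$-structures are defined purely in terms of the 3-Sasakian data, the Hodge Laplacian $\Delta_\varphi\varphi$ of any $\varphi$ in a family is again tangent to that family, so the PDE \eqref{eq:Lap.flow} collapses to an autonomous ODE system in the parameters (with the coclosed condition preserved, as asserted in the statement of the main results). First I would record the explicit form of $\d^*_\varphi\varphi$ and hence $\Delta_\varphi\varphi=\d\d^*_\varphi\varphi$ in terms of the parameters, which is a routine but careful computation in the 3-Sasakian calculus of \S2; then I would extract the resulting ODE system, normalise out the overall scaling (e.g.\ by fixing a volume or a convenient combination of parameters) to obtain the \emph{rescaled} Laplacian flow as a flow on a 2-dimensional (or lower) reduced parameter space, and locate the fixed point(s) corresponding to the nearly parallel $\GG_2$-structure(s).

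The key step is then the linearisation of this reduced ODE system at the nearly parallel fixed point. I would compute the Jacobian of the rescaled flow there and show that all of its eigenvalues have \emph{positive} real part (equivalently, that the fixed point is a source): this is precisely the statement that the nearly parallel $\GG_2$-structure is linearly — and, for an ODE system, hence dynamically — unstable, and that no nearby non–nearly-parallel $\GG_2$-structure in the family can converge to it under the rescaled flow. Since the reduced system is finite-dimensional, linear instability (a source) immediately upgrades to the dynamical statement by the stable manifold theorem / Hartman–Grobman, in contrast to the coflow case where convergence required a genuine global ODE analysis. The sign of these eigenvalues is the whole content of the theorem, and it is exactly the point where the Laplacian flow and coflow diverge: the Hodge-dual velocities produce Jacobians of opposite character on these families.

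I would organise the computation to make the contrast with Theorem~\ref{thm:coflow} transparent: carry both families in parallel, exhibit the one-dimensional scaling direction (responsible for the self-similar expanding behaviour, which is why rescaling is the right normalisation), and then reduce to the transverse directions where the eigenvalue signs are computed. The main obstacle I anticipate is purely computational: getting $\d^*_\varphi\varphi$ and $\Delta_\varphi\varphi$ correct as functions of the parameters — the nonlinear dependence of $*_\varphi$ and $\d^*_\varphi$ on $\varphi$ makes this delicate — and then correctly identifying the rescaling so that the fixed point and its Jacobian are unambiguous. Once the ODE system is in hand, checking that the eigenvalues of the linearisation are positive is a finite eigenvalue computation; I expect it to come down to evaluating a small matrix whose entries are explicit rational expressions in the 3-Sasakian structure constants (and, for the squashed family, in the squashing ratio), and verifying positivity of its spectrum.
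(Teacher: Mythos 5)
Your proposal is correct and is essentially the paper's argument: reduce the flow within the ansatz to an ODE system in the scale-invariant variables, locate the two fixed points (the nearly parallel structures), and conclude instability from the linearisation being a source, exactly as the paper does. One slip to fix: since $\varphi$ is coclosed, $\d^*_{\varphi}\varphi=0$, so the relevant operator is $\Delta_\varphi\varphi=\d^*_{\varphi}\d\varphi$ rather than $\d\d^*_{\varphi}\varphi$; the paper sidesteps the fresh computation you anticipate by noting $\Delta_{\varphi}\varphi=*\Delta_{\psi}\psi$, so that in the variables $X=a^2/c^2$, $Y=ab/c^2$ the rescaled Laplacian flow ODEs are precisely the negatives of the coflow system \eqref{eq:coflow1}--\eqref{eq:coflow2}, and the Jacobian eigenvalues at the two critical points are just the negatives of those already computed for the coflow ($-64/5,-20$ and $-4,-64$), hence positive, giving sources as you predicted.
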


Finally, for the Ricci flow \eqref{eq:Ric.flow}, we have the following, which differs again from our previous two results.

\begin{theorem}\label{thm:Ric} Along the rescaled Ricci flow for our families of metrics,
the 3-Sasakian metric is stable, whereas the squashed Einstein metric is a saddle point and so unstable. 
\end{theorem}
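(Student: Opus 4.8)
The plan is to reduce the Ricci flow on each of our two 3-parameter families of metrics to a system of ODEs in the scaling parameters and then analyse its linearisation at the two Einstein points. First I would parametrise a metric in our family by the three positive constants controlling, respectively, the size of the three Reeb directions of the 3-Sasakian structure and the size of the ``horizontal'' $4$-dimensional distribution (by symmetry one expects the three Reeb parameters to stay equal along a flow starting in the family, so that effectively one is looking at a two-parameter family with parameters $a$ for the $3$-sphere fibres and $b$ for the quaternionic-K\"ahler base). The key input is that 3-Sasakian 7-manifolds carry a canonical Riemannian submersion over a quaternionic-K\"ahler orbifold, so the Ricci curvature of any metric in the family is computable from O'Neill's formulas: it is again diagonal with respect to the same splitting, with components that are explicit rational functions of $a$ and $b$. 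This gives $\frac{\d a}{\d t}$ and $\frac{\d b}{\d t}$ as explicit functions of $(a,b)$.

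Next I would pass to the rescaled (normalised) Ricci flow, which amounts to projectivising: since the system is scale-invariant one can fix a normalisation (e.g.\ the total volume, or simply set $b=1$) and study the single remaining ratio $u=a/b$. The fixed points of this reduced flow are exactly the values of $u$ at which the metric is Einstein, and there should be precisely two of them in the relevant range, corresponding to the 3-Sasakian metric and the squashed Einstein metric (this matches the well-known fact that these are the only Einstein metrics among the canonical variation metrics of a 3-Sasakian space, as on the round versus squashed $7$-sphere). One then linearises the scalar ODE $\dot u = f(u)$ at each fixed point: the 3-Sasakian value should have $f'(u_0)<0$, giving linear — hence, for a scalar autonomous ODE, genuine dynamical — stability, while the squashed value should have $f'(u_1)>0$, giving instability.

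To upgrade this to the stated conclusion I would note two things. First, within the full (two- or three-)parameter family, one must confirm that the squashed point is a \emph{saddle}: stable along the volume-rescaling/overall-scale direction that we quotiented out but unstable in the transverse direction $u$ — this follows because the unnormalised flow has the scaling direction as a neutral-or-attracting mode (it is just reparametrisation by rescaling, removed by the normalisation) together with the unstable $u$-direction, so the critical point has eigenvalues of both signs. Second, for the 3-Sasakian point, stability within the family is exactly the sign computation above; one should remark that this is consistent with, and complements, the result of \cite{SWW} since 3-Sasakian $7$-manifolds have $b^3=0$ and so are not excluded from being stable.

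The main obstacle I anticipate is purely computational bookkeeping: getting the O'Neill-type Ricci formulas exactly right for the three-Reeb-direction splitting (the fibres are not flat $T^3$'s but carry the $SU(2)$/round-$S^3$ geometry, and there is nontrivial curvature of the submersion), and then verifying rigorously that there are exactly two positive fixed points with the claimed signs of $f'$ rather than, say, a degenerate one. A secondary subtlety is justifying that the Ricci flow genuinely preserves the family — i.e.\ that no off-diagonal or non-invariant components are generated — which should follow from the fact that the family is cut out by the symmetry group of the 3-Sasakian structure together with uniqueness of solutions to the flow, but needs to be stated carefully.
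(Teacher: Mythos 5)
Your overall strategy (O'Neill formulas for the submersion $\pi\colon M\to N$, reduction to an ODE system in scale-invariant variables, linearisation at the two Einstein points) is the same as the paper's, and the preservation-of-ansatz point you flag is handled exactly as you suggest. However, there is a genuine gap in how you treat the squashed metric. By imposing at the outset that the three Reeb directions stay equal, you reduce to the one-parameter ratio $u=a/b$, which is precisely the paper's invariant line $A=B$ in the variables $A=a^2/c^2$, $B=b^2/c^2$ of \eqref{eq:Ric.flow.A}--\eqref{eq:Ric.flow.B} (recall the ansatz \eqref{eq:metric.ansatz} scales two Reeb directions by $a$ and the third by $b$). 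Along that line the flow is $\dot A=4(1-A)(5A-1)$ as in \eqref{eq:Ric.AB}, so the squashed point $A=B=\tfrac15$ is a pure \emph{source} of a scalar ODE; the negative eigenvalue that makes it a saddle comes entirely from perturbations which scale one Reeb direction differently from the other two. Indeed, the linearisation of the two-dimensional system at $(\tfrac15,\tfrac15)$ has eigenvalue $16$ along $A=B$ and eigenvalue $-\tfrac{208}{5}$ along the transverse direction $B=-2A$. Your symmetric reduction discards exactly the direction responsible for the saddle, so it cannot establish the claim as stated, which is about stability within the family \eqref{eq:metric.ansatz} (modulo scale), not within the canonical-variation subfamily.

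Your proposed repair --- counting the overall-scale/volume direction as the stable mode so that ``eigenvalues of both signs'' appear --- does not work. The theorem concerns the \emph{rescaled} flow, in which the scale has been quotiented out and is not a direction of the phase space; and for the unnormalised Ricci flow a positive Einstein metric is not a fixed point at all but a shrinking soliton, so ``stability along the scaling direction'' has no meaning of the kind you need. The saddle structure must be exhibited in the two scale-invariant directions $(A,B)$, which requires allowing unequal Reeb scalings. A smaller (easily fixable) issue: stability of the 3-Sasakian metric should also be checked against non-symmetric perturbations; in the full system the linearisation at $(1,1)$ is $\dot A=-16A$, $\dot B=-16B$, so this is immediate, but your scalar argument only gives stability within the symmetric subfamily.
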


\noindent This result again shows that, whilst the Ricci flow and the induced flow of metrics \eqref{eq:Ric.flow.psi} from the Laplacian coflow are closely related, their behaviour can be markedly different.

\subsection{Summary} 
We begin in \S2 by discussing background on 3-Sasakian geometry, the nearly parallel G$_{2}$-Structures determined by these geometries, and our geometric flow ansatz.  We then study the behavior of the Laplacian coflow in \S3, the Laplacian flow in \S4, and the Ricci flow in \S5.  To do this, we reduce the study of each rescaled flow to the analysis of a nonlinear ODE system for two functions.    

\section{\texorpdfstring{$\mathrm{G}_2$}{G2}-structures on 3-Sasakian 7-manifolds}

In this section we recall some of the basics of 3-Sasakian geometry in 7 dimensions and outline its relationship to $\mathrm{G}_2$ geometry.  Further details on 3-Sasakian geometry can be found in \cite{BG,BGBook}.  For information about $\mathrm{G}_2$-structures, we refer the reader to \cite{JoyceBook} or \cite[pp.~3--50]{KLL}.

\subsection{3-Sasakian 7-manifolds}
We first recall the definition of a 3-Sasakian 7-manifold.

\begin{definition}\label{dfn:3Sak}
A complete Riemannian 7-manifold $(M^7,g_M)$ is \emph{3-Sasakian} if it has an orthonormal triple of Killing fields $\{E_1,E_2,E_3\}$ satisfying $[E_i,E_j]=2E_k$ for a cyclic permutation $(i,j,k)$ of $(1,2,3)$, such that each $E_i$ defines a Sasakian structure on $(M,g_M)$.
\end{definition}

If $(M,g_M)$ is 3-Sasakian then $g_M$ is Einstein with positive scalar curvature equal to 42 (so $M$ is compact) and there is a locally free action of $\mathrm{SU}(2)$ on $M$ whose leaf space $N$ is a 4-dimensional orbifold.  Moreover, there is a canonical metric $g_N$ on $N$, which is anti-self-dual Einstein with positive scalar curvature equal to 48, such that $(M, g_{M})$ and $(N, g_{N})$ are related by an orbifold Riemannian submersion: 
\begin{equation}\label{eq:M.Z.pi}
\pi: M \rightarrow N.
\end{equation}

\begin{remark}
The simplest example of a 3-Sasakian 7-manifold is the 7-sphere with its constant curvature $1$ metric. In this setting, \eqref{eq:M.Z.pi} just becomes the usual Hopf fibration with $M=S^7$ and $N=S^4$, and $N=S^4$ has its constant curvature $4$ metric.
\end{remark}

The Levi-Civita connection of $(N,g_N)$ lifts to a connection on the bundle \eqref{eq:M.Z.pi}, and so may be viewed as an $\mathfrak{su}(2)$-valued 1-form $\eta$ on $M$, which can be written as
\begin{equation}\label{eq:eta}
\eta = \sum_{i=1}^3\eta_{i} \otimes T_{i},
\end{equation}
where $\eta_1,\eta_2,\eta_3$ are 1-forms on $M$ and $\{T_1,T_2,T_3\}$ is a basis for $\mathfrak{su}(2)$ satisfying $[T_i,T_j]=2T_k$ for cyclic permutations $(i,j,k)$ of $(1,2,3)$. The curvature of $\eta$ is then an $\mathfrak{su}$(2)-valued 2-form $\omega$ which may be written as 
\begin{equation}\label{eq:omega}
\omega = -2\sum_{i=1}^3\omega_{i} \otimes T_{i}
\end{equation}
for 2-forms $\omega_1,\omega_2,\omega_3$ on $M$ which are, in fact, pullbacks of orthogonal self-dual 2-forms on $N$ since $g_N$ is anti-self-dual Einstein.  (The factor of $2$ and sign are chosen for convenience.)  Moreover, we have that the forms $\omega_1,\omega_2,\omega_3$ are normalized such that
\begin{equation}\label{eq:normalization}
    \omega_i\wedge\omega_j=2\delta_{ij}\pi^*\mathrm{vol}_N.
\end{equation}
For later use, we record the following equations satisfied by $\eta$ and $\omega$ where, in each case, $(i,j,k)$ are taken to be a cyclic permutation of $(1,2,3)$:
\begin{align}
    \mathrm{d}\eta_i&=-2\eta_j\wedge\eta_k-2\omega_i,\label{eq:deta}\\
    \mathrm{d}\omega_i&=-2\eta_j\wedge\omega_k+2\eta_k\wedge\omega_j.\label{eq:domega}
\end{align}

\noindent The 3-Sasakian metric $g_M$ on $M$ may then be given in terms of the $\eta_{i}$ and $g_{N}$ as follows:
\begin{equation}
    g_M=\eta_1^2+\eta_2^2+\eta_3^2+\pi^*g_N,
\end{equation}

We can scale $g_{M}$ by any positive constant $c$ and then $c^2g_{M}$ will still be Einstein with positive scalar curvature. We may also observe the following well-known fact.

\begin{lemma}
The metric 
\begin{equation}
    \tilde{g}_M=\frac{1}{5}(\eta_1^2+\eta_2^2+\eta_3^2)+\pi^*g_N
\end{equation}
is Einstein with positive scalar curvature and is known as the squashed Einstein metric on the 3-Sasakian $M^7$ \cite{FKMS, GS}.
\end{lemma}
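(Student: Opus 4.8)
The plan is to realise $\tilde g_M$ as one member of the \emph{canonical variation} of the 3-Sasakian metric along the fibres of the orbifold Riemannian submersion \eqref{eq:M.Z.pi}, and to compute its Ricci tensor directly from the structure equations. For $s>0$ put
\[
g_s \;=\; s\,(\eta_1^2+\eta_2^2+\eta_3^2)+\pi^*g_N,
\]
so that $g_1=g_M$ and $g_{1/5}=\tilde g_M$. By Definition~\ref{dfn:3Sak} the bracket relations $[E_i,E_j]=2E_k$ make each fibre of $\pi$ isometric to a fixed left-invariant, hence round (up to a finite quotient), metric on $\mathrm{SU}(2)$, and the fibres are totally geodesic; rescaling the fibre directions by $s$ preserves this. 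Since $(N,g_N)$ and the fibres are Einstein, and the whole configuration is invariant under the transitive $\mathrm{SU}(2)$-action on the fibres together with isometries covering those of $N$, the Ricci tensor of $g_s$ must take the form
\[
\mathrm{Ric}(g_s)=\lambda_{\mathcal V}(s)\,(\eta_1^2+\eta_2^2+\eta_3^2)+\lambda_{\mathcal H}(s)\,\pi^*g_N
\]
for functions $\lambda_{\mathcal V},\lambda_{\mathcal H}$ of $s$ alone; thus $g_s$ is Einstein exactly when $\lambda_{\mathcal V}(s)=s\,\lambda_{\mathcal H}(s)$.

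To pin down $\lambda_{\mathcal V}$ and $\lambda_{\mathcal H}$ I would work with the adapted orthonormal coframe $\{\sqrt{s}\,\eta_1,\sqrt{s}\,\eta_2,\sqrt{s}\,\eta_3,f^1,f^2,f^3,f^4\}$, where $\{f^a\}$ is the pullback of a local orthonormal coframe on $(N,g_N)$, and apply Cartan's structure equations. The exterior derivatives feeding into the first structure equation are exactly \eqref{eq:deta} and \eqref{eq:domega} in the vertical directions, while $\mathrm{d}f^a$ is the pullback of the Levi-Civita connection of $(N,g_N)$ acting on the $f^a$. The term $-2\omega_i$ in \eqref{eq:deta}, rescaled by $\sqrt s$, forces the mixed (vertical--horizontal) connection $1$-forms of $g_s$ to be of order $\sqrt s$ and built out of the $\omega_i$: this is the O'Neill $A$-tensor of the submersion. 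The only curvature input from the base is that $(N,g_N)$ is anti-self-dual Einstein with scalar curvature $48$, which kills its anti-self-dual Weyl and trace-free Ricci parts and makes $\omega_1,\omega_2,\omega_3$ an orthogonal, self-dual triple normalised by \eqref{eq:normalization}. Plugging these into the second structure equation and contracting yields $\lambda_{\mathcal V}(s)$ and $\lambda_{\mathcal H}(s)$ explicitly: $\lambda_{\mathcal H}$ is the base Einstein constant minus a multiple of $s$ coming from the squared norm of the $A$-tensor, and $\lambda_{\mathcal V}$ is the fibre Einstein constant plus a multiple of $s$ from the same source.

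Substituting into $\lambda_{\mathcal V}(s)=s\,\lambda_{\mathcal H}(s)$ reduces the Einstein condition to a polynomial in $s$, which by the above normalisations is quadratic with roots $s=1$ (the known 3-Sasakian case) and $s=\tfrac15$ (the squashed case), proving $\tilde g_M$ is Einstein. For the sign of the scalar curvature, the O'Neill formula for a submersion with totally geodesic fibres gives $\mathrm{Scal}(g_s)=s^{-1}\kappa_F+48-s\,a$, where $\kappa_F=6$ is the scalar curvature of the fibre (a unit round $S^3$, by the bracket relations) and $a>0$ is the squared norm of the O'Neill tensor of $g_1$; since $\mathrm{Scal}(g_M)=42$ we get $a=12$, hence $\mathrm{Scal}(g_s)=\tfrac6s+48-12s$ and $\mathrm{Scal}(\tilde g_M)=\mathrm{Scal}(g_{1/5})=\tfrac{378}{5}>0$. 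I expect the main obstacle to be purely computational bookkeeping: correctly tracking how the O'Neill tensor (equivalently, the non-integrability $2$-form $\omega=-2\sum_i\omega_i\otimes T_i$) enters the curvature of $g_s$ and with which powers of $s$, and using \eqref{eq:normalization} together with the anti-self-dual Einstein condition to collapse the many resulting terms into the clean two-eigenvalue form above. An essentially equivalent, shorter route is to invoke the standard O'Neill--Jensen formulas for the Ricci curvature of the canonical variation of a Riemannian submersion with totally geodesic Einstein fibres and specialise them to this normalisation, which is how the fact appears in \cite{FKMS, GS}.
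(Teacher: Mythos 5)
Your proposal is correct and is essentially the paper's own machinery: the paper states this lemma without proof (citing FKMS and GS), but its Lemma \ref{lem:Ricci} in \S 5 computes the Ricci tensor of the ansatz metric by exactly this O'Neill-type submersion computation with totally geodesic fibres, and specializing $a=b=\sqrt{s}$, $c=1$ there reproduces your quadratic $5s^2-6s+1=(5s-1)(s-1)=0$ with roots $s=1,\tfrac15$, as well as your values $\mathrm{Scal}(\tilde g_M)=\tfrac{378}{5}$ and $|A|^2=12$. One small caution: the diagonal two-eigenvalue form of $\mathrm{Ric}(g_s)$ should be justified by the computation itself (the mixed vertical--horizontal terms vanish since the fibres are totally geodesic and $A$ is horizontally divergence-free, and the horizontal block is a multiple of $\pi^*g_N$ because $\omega_1,\omega_2,\omega_3$ span the self-dual $2$-forms normalized by \eqref{eq:normalization}) rather than by the symmetry argument you invoke, since a general anti-self-dual Einstein base has no continuous isometries --- but your structure-equation outline does supply exactly this, so the gap is only in the phrasing.
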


\begin{remark}\label{rmk:holonomy}
The metric cone on $(M,g_M)$ has holonomy contained in $\mathrm{Sp}(2)$, whereas the metric cone on $(M,\tilde{g}_M)$ (once one scales $\tilde{g}_M$ appropriately) has holonomy $\mathrm{Spin}(7)$.  In the first case, the metric cone has the full holonomy $\mathrm{Sp}(2)$ if it is not flat.
\end{remark}

\subsection{Natural \texorpdfstring{$\mathrm{G}_2$}{G2}-structures}
We recall that a $\mathrm{G}_2$-structure on a 7-manifold is determined by a 3-form $\varphi$ on the manifold satisfying a certain nondegeneracy condition.  Such a 3-form determines a metric $g_\varphi$ and volume form $\mathrm{vol}_{\varphi}$, and hence a  dual 4-form $\psi=*_{\varphi}\varphi$, where $*_{\varphi}$ is the Hodge star determined by $\varphi$.

Given the data in \eqref{eq:M.Z.pi}, \eqref{eq:eta} and \eqref{eq:omega} above, we may now write down a natural family of $\mathrm{G}_2$-structures on a 3-Sasakian 7-manifold $(M^7,g_M)$ as follows.

\begin{lemma}\label{lem:ansatz}
Given $a_1,a_2,a_3,c>0$ and $\epsilon\in\{\pm1\}$, if we let $\mathbf{a}=(a_1,a_2,a_3)$ then the $3$-form 
\begin{equation}\label{eq:varphi.general}
    \varphi_{\mathbf{a},c,\epsilon}=\epsilon a_1a_2a_3\eta_{1}\wedge\eta_2\wedge\eta_3-c^2(a_1\eta_1\wedge\omega_1+a_2\eta_2\wedge\omega_2+\epsilon a_3\eta_3\wedge\omega_3)
\end{equation}
defines a $\mathrm{G}_2$-structure on $M$.  Moreover, this $\mathrm{G}_2$-structure induces the following metric, volume form and dual $4$-form:
\begin{align}
    g_{\mathbf{a},c}&=a_1^2\eta_1^2+a_2^2\eta_2^2+a_3^2\eta_3^2+c^2\pi^*g_N;\label{eq:metric.general}\\
   \mathrm{vol}_{\mathbf{a},c,\epsilon}&=\epsilon a_1a_2a_3c^4\eta_1\wedge\eta_2\wedge\eta_3\wedge\pi^*\mathrm{vol}_N;\\
       \psi_{\mathbf{a},c,\epsilon}&=c^4\pi^*\mathrm{vol}_N-c^2(\epsilon a_2a_3\eta_2\wedge\eta_3\wedge\omega_1+\epsilon a_3a_1\eta_3\wedge\eta_1\wedge\omega_2+a_1a_2\eta_1\wedge\eta_2\wedge\omega_3).\label{eq:psi.general}
\end{align}
Note that $g_{\mathbf{a},c}$ is independent of $\epsilon$.
\end{lemma}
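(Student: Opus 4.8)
The plan is to verify Lemma~\ref{lem:ansatz} by direct computation using the linear-algebraic model of a $\GG_2$-structure. First I would fix, at each point of $M$, an orthonormal coframe adapted to the 3-Sasakian data: take $e^0 = \eta_1$, $e^1 = \eta_2$, $e^2 = \eta_3$ as the ``vertical'' coframe, and choose $e^3, e^4, e^5, e^6$ to be a $g_N$-orthonormal coframe on the horizontal distribution (pulled back from $N$) in which the self-dual 2-forms $\omega_1, \omega_2, \omega_3$ take the standard form $\omega_1 = e^{34}+e^{56}$, $\omega_2 = e^{35}+e^{64}$, $\omega_3 = e^{36}+e^{45}$ (or the analogous hyperk\"ahler-triple normalization consistent with \eqref{eq:normalization}). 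One must first check that such a simultaneous normalization exists: this follows because the $\omega_i$ are the pullbacks of an orthogonal triple of self-dual 2-forms on the anti-self-dual Einstein base $N$, satisfying $\omega_i \wedge \omega_j = 2\delta_{ij}\pi^*\mathrm{vol}_N$, so pointwise they span a maximal positive subspace of $\Lambda^2$ and can be put in standard quaternionic form by an $\mathrm{SO}(4)$ rotation of $e^3,\dots,e^6$.

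Next I would substitute this coframe into \eqref{eq:varphi.general}. With the rescaled coframe $f^0 = a_1 e^0$, $f^1 = a_2 e^1$, $f^2 = a_3 e^2$, $f^i = c\, e^i$ for $i=3,\dots,6$, one rewrites
\[
\varphi_{\mathbf{a},c,\epsilon} = \epsilon f^{012} - (f^0 \wedge \tilde\omega_1 + f^1 \wedge \tilde\omega_2 + \epsilon f^2 \wedge \tilde\omega_3),
\]
where $\tilde\omega_i$ denotes $\omega_i$ expressed in the $f^3,\dots,f^6$ coframe. Comparing with the standard $\GG_2$ 3-form $\varphi_0 = f^{012} + f^0\wedge(f^{34}+f^{56}) + f^1\wedge(f^{35}+f^{64}) + f^2\wedge(f^{36}+f^{45})$, one sees that $\varphi_{\mathbf{a},c,\epsilon}$ differs from $\varphi_0$ only by the sign $\epsilon$ on two of the terms; when $\epsilon = -1$ this is exactly the effect of the coordinate change $f^2 \mapsto -f^2$ (which flips $f^{012}$, fixes $f^0\wedge\tilde\omega_1$ since $\omega_1$ involves neither $e^2$-related index after our normalization — more precisely one checks the signs term by term), so $\varphi_{\mathbf{a},c,\epsilon}$ is $\mathrm{GL}(7,\mathbb{R})$-equivalent to $\varphi_0$ and hence is a genuine $\GG_2$-structure 3-form. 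This simultaneously identifies the induced inner product: since the adapted coframe $f^0,\dots,f^6$ becomes orthonormal for $g_\varphi$, we read off \eqref{eq:metric.general} directly, and then \eqref{eq:eta} gives $\mathrm{vol}_{\mathbf{a},c,\epsilon} = f^{0123456} = \epsilon\, a_1 a_2 a_3 c^4\, \eta_{123}\wedge\pi^*\mathrm{vol}_N$ (the $\epsilon$ reflecting the orientation convention built into $\varphi$).

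Finally, to get \eqref{eq:psi.general} I would compute $\psi = *_\varphi \varphi$ in the adapted coframe, where $*_\varphi$ is just the Hodge star of the orthonormal coframe $f^0,\dots,f^6$ with the orientation fixed by $\varphi$. Using the standard identity $*\varphi_0 = f^{3456} + f^{01}\wedge\star(\cdots)+\dots$, i.e.\ the known form $\psi_0 = f^{3456} + f^{36}\wedge f^{45}$-type terms — concretely $\psi_0 = f^{3456} - f^{12}\wedge(f^{34}+f^{56}) - f^{20}\wedge(f^{35}+f^{64}) - f^{01}\wedge(f^{36}+f^{45})$ up to sign conventions — and translating back via $f^i \to$ the $\eta$'s and $c\,e^i$, and using $e^{3456} = \tfrac12 \omega_i\wedge\omega_i$ hence $c^4 e^{3456} = c^4 \pi^*\mathrm{vol}_N$, yields \eqref{eq:psi.general} after matching coefficients. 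The main obstacle is bookkeeping: getting all the signs consistent between the $\epsilon = +1$ and $\epsilon = -1$ cases and ensuring the orientation used for $*_\varphi$ is the one determined by $\varphi$ rather than an a priori fixed one — this is exactly where the $\epsilon$'s in the volume form and in $\psi$ enter, and it is worth doing the $\epsilon = -1$ case by the explicit substitution $e^2 \mapsto -e^2$ applied to the already-verified $\epsilon = +1$ formulas, which makes the sign pattern in \eqref{eq:psi.general} automatic and also makes the final sentence ($g_{\mathbf{a},c}$ independent of $\epsilon$) manifest.
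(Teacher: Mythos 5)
Your proposal is correct and is essentially the paper's own (largely unwritten) argument: the paper dismisses the lemma as an elementary consequence of the normalization \eqref{eq:normalization}, and your pointwise adapted-coframe computation — putting the self-dual triple $\omega_1,\omega_2,\omega_3$ in standard form, rescaling the coframe by $a_1,a_2,a_3,c$, matching against the model $\GG_2$ $3$-form to read off the metric, volume and $*_\varphi\varphi$, and deducing the $\epsilon=-1$ case from $\epsilon=+1$ via the reflection $\eta_3\mapsto-\eta_3$ — is exactly that elementary verification. The only quibble is the phrase that $\varphi_{\mathbf{a},c,\epsilon}$ ``differs from $\varphi_0$ only by the sign $\epsilon$ on two of the terms'' (with your all-plus convention for $\varphi_0$ it also differs by the uniform minus signs on the $\eta_i\wedge\omega_i$ terms, so the model form must be taken with the matching sign convention), but this is precisely the sign bookkeeping you already flag and it does not affect the validity of the argument.
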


\noindent This result is an elementary consequence of the fact that $\omega_1,\omega_2,\omega_3$ are the pullbacks of self-dual 2-forms on $N$ satisfying \eqref{eq:normalization}.

\begin{remark} Initially, one may allow for $a_1,a_2,a_3\in\mathbb{R}\setminus\{0\}$.  However, $\varphi$ and $-\varphi$ are the same $\mathrm{G}_2$-structure up to a change of orientation.  Moreover, there are only two possibilities: either $a_1,a_2,a_3$ all have the same sign, or just two have the same sign.  Therefore, we can take $a_1,a_2,a_3$ to be all positive and use $\epsilon$ to account for the two choices.
\end{remark}

We now compute the exterior derivatives of $\varphi_{\mathbf{a},c,\epsilon}$ and $\psi_{\mathbf{a},c,\epsilon}$, which together encode all of the information about the torsion of the $\GG_2$-structure.

\begin{lemma}\label{lem:torsion}
Let $\varphi_{\mathbf{a},c,\epsilon}$ and $\psi_{\mathbf{a},c,\epsilon}$ be as in Lemma \ref{lem:ansatz}.  Then:
\begin{align*}
    \mathrm{d}\varphi_{\mathbf{a},c,\epsilon}&=4c^2(a_1+a_2+\epsilon a_3)\pi^*\mathrm{vol}_N\\
&\quad    -2(\epsilon a_1a_2a_3-c^2a_1+c^2a_2+\epsilon c^2a_3)\eta_{2}\wedge\eta_3\wedge\omega_1\\
    & \quad -2(\epsilon a_1a_2a_3+c^2a_1-c^2a_2+\epsilon c^2a_3)\eta_3\wedge\eta_1\wedge\omega_2\\
    &\quad-2(\epsilon a_1a_2a_3+c^2a_1+c^2a_2-\epsilon c^2a_3)\eta_1\wedge\eta_2\wedge\omega_3; \\
 \mathrm{d}\psi_{\mathbf{a},c,\epsilon}&=0.   
\end{align*}
\end{lemma}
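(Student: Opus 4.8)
The plan is to compute $\d\varphi_{\mathbf{a},c,\epsilon}$ and $\d\psi_{\mathbf{a},c,\epsilon}$ directly by applying the exterior derivative to the explicit expressions \eqref{eq:varphi.general} and \eqref{eq:psi.general}, using only the structure equations \eqref{eq:deta} and \eqref{eq:domega} together with the normalization \eqref{eq:normalization}. Since exterior differentiation is a derivation and every building block ($\eta_i$, $\omega_i$, $\pi^*\mathrm{vol}_N$) has a known derivative, the computation is purely mechanical; the only subtlety is bookkeeping of signs and of which wedge products vanish. First I would record the auxiliary facts needed: $\d(\pi^*\mathrm{vol}_N)=0$ (it is a pullback of a top form on the $4$-orbifold $N$), $\eta_i\wedge\eta_j\wedge\eta_k\wedge\eta_l=0$ on a manifold where the $\eta$'s span a $3$-dimensional distribution, $\omega_i\wedge\eta_j\wedge\eta_k\wedge\eta_l=0$ for the same reason whenever three distinct $\eta$'s appear, and $\omega_i\wedge\omega_j=2\delta_{ij}\pi^*\mathrm{vol}_N$ from \eqref{eq:normalization}. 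It is also worth noting $\eta_i\wedge\omega_i$ has no $\d\eta$-type contribution surviving against another $\omega$ unless indices match after using \eqref{eq:normalization}.

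For $\d\varphi$: differentiate the term $\epsilon a_1a_2a_3\,\eta_1\wedge\eta_2\wedge\eta_3$ using \eqref{eq:deta}; the $-2\eta_j\wedge\eta_k$ pieces in $\d\eta_i$ produce four-fold $\eta$ wedges which vanish, so only the $-2\omega_i$ pieces survive, giving $-2\epsilon a_1a_2a_3(\omega_1\wedge\eta_2\wedge\eta_3+\eta_1\wedge\omega_2\wedge\eta_3+\eta_1\wedge\eta_2\wedge\omega_3)$. Then differentiate each $-c^2 a_i\,\eta_i\wedge\omega_i$ (with the extra $\epsilon$ on the $i=3$ term): $\d\eta_i\wedge\omega_i$ contributes $-2\omega_i\wedge\omega_i=0$ from the $\omega_i$ part and $-2\eta_j\wedge\eta_k\wedge\omega_i$ from the $\eta\wedge\eta$ part, while $-\eta_i\wedge\d\omega_i=-\eta_i\wedge(-2\eta_j\wedge\omega_k+2\eta_k\wedge\omega_j)$ contributes terms of the form $\eta_i\wedge\eta_j\wedge\omega_k$. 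Collecting all contributions by the basis $\{\eta_2\wedge\eta_3\wedge\omega_1,\eta_3\wedge\eta_1\wedge\omega_2,\eta_1\wedge\eta_2\wedge\omega_3,\pi^*\mathrm{vol}_N\}$ — the last only arising from cross terms $\eta_i\wedge\eta_k\wedge\omega_j$ when $\{i,k\}$ forces a repeated $\omega$ index via \eqref{eq:normalization}, or more precisely reorganizing $\eta_i\wedge\eta_j\wedge\omega_k$ is \emph{not} a $\mathrm{vol}_N$ term, so actually the $\pi^*\mathrm{vol}_N$ coefficient $4c^2(a_1+a_2+\epsilon a_3)$ must come from differentiating something whose derivative hits $\omega_i\wedge\omega_i$; rechecking, it comes from $-c^2 a_i\,\d(\eta_i\wedge\omega_i)$ where $\d\eta_i$ against $\omega_i$ gives $-2\omega_i\wedge\omega_i = -4\pi^*\mathrm{vol}_N$, yielding coefficient $-c^2a_i\cdot(-(-2))\cdot\ldots$ — I would track the precise sign carefully here. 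Matching all four coefficients against the claimed formula finishes $\d\varphi$.

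For $\d\psi=0$: differentiate \eqref{eq:psi.general} term by term. The $c^4\pi^*\mathrm{vol}_N$ term is closed. For $-c^2\epsilon a_2a_3\,\eta_2\wedge\eta_3\wedge\omega_1$ (and its cyclic companions), use $\d(\eta_2\wedge\eta_3\wedge\omega_1)=\d\eta_2\wedge\eta_3\wedge\omega_1-\eta_2\wedge\d\eta_3\wedge\omega_1+\eta_2\wedge\eta_3\wedge\d\omega_1$. The $\d\eta$ terms via \eqref{eq:deta}: the $\omega_i$ parts give $\omega_2\wedge\eta_3\wedge\omega_1$ and $\omega_3\wedge\eta_2\wedge\omega_1$, which vanish by \eqref{eq:normalization} since the $\omega$-indices differ and a stray $\eta$ blocks forming $\mathrm{vol}_N$ correctly — actually $\omega_i\wedge\omega_j\wedge\eta_k$ is a $5$-form, nonzero only if $i=j$, so these vanish; the $\eta\wedge\eta$ parts give $\eta_3\wedge\eta_1\wedge\eta_3\wedge\omega_1=0$ and similar. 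The $\d\omega_1$ term via \eqref{eq:domega} gives $\eta_2\wedge\eta_3\wedge(-2\eta_2\wedge\omega_3+2\eta_3\wedge\omega_2)=0$. Summing the three cyclic terms, everything cancels, so $\d\psi=0$. Alternatively, I would remark that $\d\psi=0$ also follows abstractly from Lemma \ref{lem:ansatz} together with the fact that $\psi_{\mathbf{a},c,\epsilon}$ differs from a $\pi$-pullback by exact-type combinations; but the direct check is cleanest.

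The main obstacle is not conceptual but combinatorial: keeping the signs from \eqref{eq:deta}, \eqref{eq:domega}, the graded Leibniz rule, and the reordering of wedge factors all consistent, especially the asymmetry introduced by the single $\epsilon$ attached to the index-$3$ slots in \eqref{eq:varphi.general}. I would organize the computation by fixing the ordered basis $(\eta_1,\eta_2,\eta_3)$ and the oriented triples $(2,3,1),(3,1,2),(1,2,3)$, compute the coefficient of each basis $5$-form and of $\pi^*\mathrm{vol}_N$ exactly once, and then read off the four coefficients $4c^2(a_1+a_2+\epsilon a_3)$, $-2(\epsilon a_1a_2a_3-c^2a_1+c^2a_2+\epsilon c^2a_3)$, and its two cyclic analogues, verifying the pattern of signs on the $c^2a_i$ terms matches the slot structure ($-,+,+$ for the first; $+,-,+$ for the second; $+,+,-$ for the third, with an $\epsilon$ wherever index $3$ appears). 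Once the $\d\varphi$ bookkeeping is done correctly, $\d\psi=0$ is a short verification.
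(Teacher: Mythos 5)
Your proposal is correct and is essentially the paper's own argument: the paper gives no further proof beyond noting that the lemma "follows quickly from \eqref{eq:deta} and \eqref{eq:domega}", i.e.\ precisely the direct Leibniz computation you describe, with the $\pi^*\mathrm{vol}_N$ coefficient $4c^2(a_1+a_2+\epsilon a_3)$ arising, as you identify, from the $-2\omega_i$ part of $\d\eta_i$ wedging against $\omega_i$ via $\omega_i\wedge\omega_i=2\pi^*\mathrm{vol}_N$, and $\d\psi_{\mathbf{a},c,\epsilon}=0$ holding because each term $\d(\eta_j\wedge\eta_k\wedge\omega_i)$ vanishes identically (no cross-cancellation between the cyclic terms is needed). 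Two small slips that do not affect the method: in $\d(\eta_1\wedge\eta_2\wedge\eta_3)$ the middle contribution is $+2\eta_1\wedge\omega_2\wedge\eta_3=-2\,\eta_3\wedge\eta_1\wedge\omega_2$ (so all three terms have coefficient $-2$ in the cyclic order $\eta_j\wedge\eta_k\wedge\omega_i$, not the sign you wrote for that slot), and your auxiliary claim that $\omega_i\wedge\eta_j\wedge\eta_k\wedge\eta_l=0$ with three distinct $\eta$'s is false (e.g.\ $\omega_1\wedge\eta_1\wedge\eta_2\wedge\eta_3\neq0$) but is never actually invoked, since every vanishing you need comes from a repeated $\eta$ factor or from $\omega_i\wedge\omega_j=0$ for $i\neq j$.
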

\noindent This result follows quickly from \eqref{eq:deta} and \eqref{eq:domega}.  Notice in particular that the $\GG_2$-structures are all \emph{coclosed}.  

\begin{remark}\label{rmk:special}
We note the following special cases of our family of $\mathrm{G}_2$-structures.
\begin{itemize}
\item We can always make an overall rescaling so that $c=1$.  (However, we shall see that we will require the freedom to vary the scale $c$ along our flows.)  
\item Taking $a_1=a_2=a_3=c=1$ and $\epsilon=1$ gives a coclosed $\mathrm{G}_2$-structure inducing the 3-Sasakian  metric. 
It has been referred to as the ``canonical'' $\mathrm{G}_2$-structure on a 3-Sasakian 7-manifold (see e.g.~\cite{AF}). 
\item Taking $a_1=a_2=a_3=a$ and $c=1$ gives the family of $\mathrm{G}_2$-structures considered in \cite{LO}.  The subfamily where $\epsilon=1$ was also studied earlier in \cite{FKMS}. 
\end{itemize}
\end{remark}

\subsection{Nearly parallel G\textsubscript{2}-structures} We recall the definition of the distinguished class of $\GG_2$-structures that will be the focus of this paper.

\begin{definition}
A $\GG_2$-structure on a 7-manifold $M$ defined by a 3-form $\varphi$ with dual 4-form $\psi$ is \emph{nearly parallel} if
$$\mathrm{d}\varphi=\lambda \psi$$
for some non-zero constant $\lambda$. 
(A priori $\lambda$ could be a function on $M$, but a short argument using $\d\psi=0$ and some representation theory shows that it must in fact be constant.)

A nearly parallel $\GG_2$-structure $\varphi$ induces an Einstein metric $g_{\varphi}$ with positive scalar curvature.  If $\lambda$ is chosen so that the scalar curvature of $g_{\varphi}$ is 42, then the cone metric $\mathrm{d}r^2+r^2g_{\varphi}$ on $\mathbb{R}^+\times M$ is Ricci-flat and has holonomy contained in $\mathrm{Spin}(7)$, and $\varphi$ is  \emph{strictly nearly parallel} if the holonomy of this cone metric is $\mathrm{Spin}(7)$.  (One should compare this to Remark \ref{rmk:holonomy}.)
\end{definition}

We now record the following facts, which follow immediately from \eqref{eq:deta} and \eqref{eq:domega}, that show that our family of $\GG_2$-structures contains two nearly parallel $\GG_2$-structures (up to scale).

\begin{lemma}\label{lem:np} Take $a_1=a_2=a_3=a$ in $\varphi_{\mathbf{a},c,\epsilon}$.
\begin{itemize}
    \item If $a=\frac{1}{\sqrt{5}}c$ and $\epsilon=1$, then the resulting $\mathrm{G}_2$-structure, which we may write  $c^3\varphi^{np}$ with $\varphi^{np}$ independent of $c$, is (strictly) nearly parallel and its induced metric is $c^2\tilde{g}_M$.
    \item If $a=c$ and $\epsilon=-1$, then the resulting $\mathrm{G}_2$-structure, which we may write  $c^3\varphi^{ts}$ where $\varphi^{ts}$ is independent of $c$, is nearly parallel and its induced metric is $c^2g_M$.
\end{itemize}
\end{lemma}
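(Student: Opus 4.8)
The plan is to prove both items by a direct specialisation of Lemmas \ref{lem:ansatz} and \ref{lem:torsion} to $a_1=a_2=a_3=a$, followed by an elementary comparison of $\d\varphi_{\mathbf{a},c,\epsilon}$ with $\psi_{\mathbf{a},c,\epsilon}$; the $\GG_2$-structure condition itself needs no further argument, being already furnished by Lemma \ref{lem:ansatz} whenever $a,c>0$, which holds in both cases. First I would put $a_1=a_2=a_3=a$ into the formula for $\d\varphi_{\mathbf{a},c,\epsilon}$ in Lemma \ref{lem:torsion}. A short calculation shows that the coefficients of $\eta_2\wedge\eta_3\wedge\omega_1$ and $\eta_3\wedge\eta_1\wedge\omega_2$ both collapse to $-2\epsilon a(a^2+c^2)$, that the coefficient of $\eta_1\wedge\eta_2\wedge\omega_3$ becomes $-2a\big(\epsilon a^2+(2-\epsilon)c^2\big)$, and that the coefficient of $\pi^*\mathrm{vol}_N$ becomes $4c^2a(2+\epsilon)$; in parallel, specialising \eqref{eq:psi.general} gives $\psi_{\mathbf{a},c,\epsilon}=c^4\pi^*\mathrm{vol}_N-c^2a^2\big(\epsilon\,\eta_2\wedge\eta_3\wedge\omega_1+\epsilon\,\eta_3\wedge\eta_1\wedge\omega_2+\eta_1\wedge\eta_2\wedge\omega_3\big)$.

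Next I would impose $\d\varphi=\lambda\psi$ in each of the two cases. For $\epsilon=1$ all three $\eta$-$\omega$ coefficients of $\d\varphi$ coincide and equal $-2a(a^2+c^2)$, while those of $\psi$ equal $-c^2a^2$, so proportionality reduces to the single equation $12a/c^2=2(a^2+c^2)/(c^2a)$, i.e. $a^2=c^2/5$; taking $a=c/\sqrt5$ then gives $\d\varphi=\lambda\psi$ with $\lambda=12/(\sqrt5\,c)\neq0$, so this $\GG_2$-structure is nearly parallel. For $\epsilon=-1$ the symmetric $\eta$-$\omega$ coefficients of $\d\varphi$ are $2a(a^2+c^2)$ whereas the third, the coefficient of $\eta_1\wedge\eta_2\wedge\omega_3$, is $2a(a^2-3c^2)$, and the same sign asymmetry on the third term occurs in $\psi$; requiring all coefficients to be proportional forces $a=c$, and then $\d\varphi=(4/c)\psi$ with $4/c\neq0$, so again it is nearly parallel. (Conversely, these computations show that $a=c/\sqrt5,\ \epsilon=1$ and $a=c,\ \epsilon=-1$ are the only parameter choices in the slice $a_1=a_2=a_3$ yielding a nearly parallel structure.) In both cases the rewriting as $c^3\varphi^{np}$ and $c^3\varphi^{ts}$ with $\varphi^{np},\varphi^{ts}$ independent of $c$ is immediate from \eqref{eq:varphi.general}, because the two coefficient types $a^3$ and $c^2a$ are constant multiples of $c^3$ once $a$ is a fixed multiple of $c$; and the induced metric is read directly off \eqref{eq:metric.general}, giving $g_{\mathbf{a},c}=\tfrac{c^2}{5}(\eta_1^2+\eta_2^2+\eta_3^2)+c^2\pi^*g_N=c^2\tilde{g}_M$ when $a^2=c^2/5$, and $g_{\mathbf{a},c}=c^2(\eta_1^2+\eta_2^2+\eta_3^2)+c^2\pi^*g_N=c^2g_M$ when $a=c$.

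Finally, to account for the parenthetical ``strictly'' in the first item I would invoke Remark \ref{rmk:holonomy}: the induced metric is a positive constant multiple of $\tilde{g}_M$, whose cone — once rescaled to be Ricci-flat, equivalently normalised so that the scalar curvature of the induced metric is $42$ as in the definition of strictly nearly parallel — has full holonomy $\mathrm{Spin}(7)$. By contrast, in the second item the induced metric is a multiple of $g_M$, whose cone has holonomy contained in $\mathrm{Sp}(2)\subsetneq\mathrm{Spin}(7)$ and is non-flat for any genuine 3-Sasakian $M$, so that $\GG_2$-structure is nearly parallel but not strictly so; this is why ``strictly'' is bracketed only in the first item. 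I do not expect a genuine obstacle anywhere: once Lemmas \ref{lem:ansatz} and \ref{lem:torsion} are available the whole argument is routine algebra, and the only thing requiring care is keeping track of the sign $\epsilon$ and of the distinguished role played by the third $\eta$-$\omega$ term in the ansatz \eqref{eq:varphi.general}, whose asymmetry must be checked to wash out in the proportionality $\d\varphi=\lambda\psi$.
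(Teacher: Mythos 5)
Your computations check out, and your route — specialising Lemma \ref{lem:torsion} and \eqref{eq:psi.general} to $a_1=a_2=a_3=a$, matching coefficients in $\d\varphi=\lambda\psi$, reading the metric off \eqref{eq:metric.general}, and citing Remark \ref{rmk:holonomy} for strictness — is exactly the argument the paper has in mind, which it leaves as an immediate consequence of \eqref{eq:deta}--\eqref{eq:domega} via Lemma \ref{lem:torsion}. No gaps; the proposal is correct and essentially identical in approach to the paper's (omitted) proof.
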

\noindent Hence, within each branch (determined by $\epsilon\in\{\pm1\}$) of our family of $\GG_2$-structures, there is one natural critical point (up to scale) for our geometric flows.  For $\epsilon=1$, this is the strictly nearly parallel $\GG_2$-structure $\varphi^{np}$ inducing the squashed Einstein metric $\tilde{g}_M$ on $M$, and for $\epsilon=-1$ this is the nearly parallel $\GG_2$-structure $\varphi^{ts}$ (where ``ts" stands for 3-Sasakian) inducing the 3-Sasakian metric $g_M$.

\begin{remark}
If we take $a=\frac{1}{\sqrt{5}}c$ and $\epsilon=-1$ in Lemma \ref{lem:np} then we obtain a coclosed $\GG_2$-structure which  induces the Einstein metric $c^2\tilde{g}_M$, but is not  nearly parallel.  The same occurs when we  $a=c$ and $\epsilon=1$, but now for the Einstein metric $c^2g_M$: this gives a multiple of the ``canonical'' $\GG_2$-structure  we saw earlier (cf.~Remark \ref{rmk:special}). 
\end{remark}

\begin{remark}
    It is worth noting that, by Lemma \ref{lem:np} and \cite[Examples 1.14 and 1.15]{CN}, the $\GG_2$-structures defined by $\varphi_{\mathbf{a},c,+1}$ and $\varphi_{\mathbf{a},c,-1}$ cannot be homotopic through $\GG_2$-structures.
\end{remark}

\subsection{The ansatz}\label{ss:ansatz} Motivated by Lemma \ref{lem:np}, we will take our ansatz to be a special case of that of Lemma \ref{lem:ansatz} where
\begin{equation}\label{eq:ansatz}
    a_1=a_2=a(t),\qquad a_3=b(t)\quad\text{and}\quad c=c(t),
\end{equation}
for positive time-dependent functions $a(t),b(t),c(t)$. These then define 1-parameter families of $\GG_2$ 3-forms $\varphi_{\epsilon}(t)$ depending on $t$, with induced metric $g(t)$, volume form $\mathrm{vol}_{\epsilon}(t)$ and dual 4-form $\psi_{\epsilon}(t)$ as follows: 
\begin{align}
      \varphi_{\epsilon}(t)&=\epsilon a(t)^2b(t)\eta_{1}\wedge\eta_2\wedge\eta_3-a(t)c(t)^2(\eta_1\wedge\omega_1+\eta_2\wedge\omega_2)-\epsilon b(t)c(t)^2\eta_3\wedge\omega_3;\label{eq:varphi.ansatz}\\
    g(t)&=a(t)^2(\eta_1^2+\eta_2^2)+b(t)^2\eta_3^2+c(t)^2\pi^*g_N;\label{eq:metric.ansatz}\\
   \mathrm{vol}_{\epsilon}(t)&=\epsilon a(t)^2b(t)c(t)^4\eta_1\wedge\eta_2\wedge\eta_3\wedge\pi^*\mathrm{vol}_N;\\
       \psi_{\epsilon}(t)&=c(t)^4\pi^*\mathrm{vol}_N-\epsilon a(t)b(t)c(t)^2(\eta_2\wedge\eta_3\wedge\omega_1+\eta_3\wedge\eta_1\wedge\omega_2)-a(t)^2c(t)^2\eta_1\wedge\eta_2\wedge\omega_3.\label{eq:psi.ansatz}
\end{align}
We include the subscript $\epsilon$ to emphasise the choice of branch given by $\epsilon\in\{\pm 1\}$, as we shall see different behaviour for distinct choices of $\epsilon$, but drop the subscript for $g(t)$ since it is independent of $\epsilon$.  We shall make the restriction in \eqref{eq:ansatz} henceforth in this article.

\begin{remark}
The reader way wonder why we do not simply choose $a=b$ in \eqref{eq:ansatz} given that this holds for the nearly parallel $\GG_2$-structures in Lemma \ref{lem:np}.  We shall see that the simpler ansatz when $a=b$ is not necessarily preserved along the geometric flows we consider, and so we broaden our study to consider the larger class of 1-parameter families of $\GG_2$-structures given by the condition \eqref{eq:ansatz}.  One could also consider curves in the full family of $\GG_2$-structures in Lemma \ref{lem:ansatz}, but this would be much more challenging and we already exhibit interesting behaviour within the framework provided by \eqref{eq:ansatz}. 
\end{remark}

  For the ansatz, we have the following simplification and slight extension of Lemma \ref{lem:torsion}.

\begin{lemma}\label{lem:torsion2}
Let $\varphi_{\epsilon}=\varphi_{\epsilon}(t)$ and $\psi_{\epsilon}=\psi_{\epsilon}(t)$ be given by Lemma \ref{lem:ansatz} with the conditions in \eqref{eq:ansatz}.  Then:
\begin{align*}
    \mathrm{d}\varphi_{\epsilon}&=4c^2(2a+\epsilon b)\pi^*\mathrm{vol}_N -2\epsilon b(a^2+c^2)\eta_{2}\wedge\eta_3\wedge\omega_1\\
    & \quad -2\epsilon b(a^2+c^2)\eta_3\wedge\eta_1\wedge\omega_2-2\epsilon(a^2b+2\epsilon ac^2-bc^2)\eta_1\wedge\eta_2\wedge\omega_3; \\
 \mathrm{d}\psi_{\epsilon}&=0.   
\end{align*}
Moreover, we may write $\mathrm{d}\varphi_{\epsilon}=\tau_0\psi_{\epsilon}+*\tau_3$ where
\begin{align*}
   \tau_0&=\frac{4}{7a^2c^2}\left(4a(a^2+c^2)+\epsilon b(2a^2-c^2) \right)
\end{align*}
and $\tau_3\wedge\varphi_{\epsilon}=0=\tau_3\wedge\psi_{\epsilon}$.
\end{lemma}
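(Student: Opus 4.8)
The plan is to prove the two assertions of the lemma in turn. First come the explicit formulas for $\mathrm{d}\varphi_\epsilon$ and $\mathrm{d}\psi_\epsilon$; for these I would simply substitute the ansatz $a_1=a_2=a$, $a_3=b$ into Lemma~\ref{lem:torsion}. The coefficient $4c^2(a_1+a_2+\epsilon a_3)$ of $\pi^*\mathrm{vol}_N$ becomes $4c^2(2a+\epsilon b)$; in the coefficients of $\eta_2\wedge\eta_3\wedge\omega_1$ and of $\eta_3\wedge\eta_1\wedge\omega_2$ the terms $\pm c^2(a_1-a_2)$ vanish, leaving the common value $-2\epsilon b(a^2+c^2)$; and the coefficient of $\eta_1\wedge\eta_2\wedge\omega_3$ reduces to $-2(\epsilon a^2b+2c^2a-\epsilon c^2b)=-2\epsilon(a^2b+2\epsilon ac^2-bc^2)$ after pulling out a factor $\epsilon$. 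The identity $\mathrm{d}\psi_\epsilon=0$ is inherited verbatim from Lemma~\ref{lem:torsion}. This step is a routine substitution.

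For the decomposition $\mathrm{d}\varphi_\epsilon=\tau_0\psi_\epsilon+*\tau_3$ I would invoke the standard decomposition of the intrinsic torsion of a $\GG_2$-structure (see~\cite{Bryant} or~\cite{KLL}): writing $\tau_0\in\Omega^0$, $\tau_1\in\Omega^1$, $\tau_2\in\Omega^2_{14}$, $\tau_3\in\Omega^3_{27}$ for the torsion forms, one has $\mathrm{d}\varphi=\tau_0\psi+3\tau_1\wedge\varphi+*\tau_3$, while $\mathrm{d}\psi$ has $\Omega^5_7$-component a multiple of $\tau_1\wedge\psi$ and $\Omega^5_{14}$-component $*\tau_2$. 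Since these two summands lie in complementary subbundles, $\mathrm{d}\psi_\epsilon=0$ forces $\tau_1=0$ and $\tau_2=0$, so that $\mathrm{d}\varphi_\epsilon=\tau_0\psi_\epsilon+*\tau_3$ with $\tau_3\in\Omega^3_{27}$, which is exactly the statement that $\tau_3\wedge\varphi_\epsilon=0=\tau_3\wedge\psi_\epsilon$.

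To pin down the function $\tau_0$, I would wedge the identity $\mathrm{d}\varphi_\epsilon=\tau_0\psi_\epsilon+*\tau_3$ with $\varphi_\epsilon$. Using $\psi_\epsilon\wedge\varphi_\epsilon=|\varphi_\epsilon|^2\,\mathrm{vol}_\epsilon=7\,\mathrm{vol}_\epsilon$ and $*\tau_3\wedge\varphi_\epsilon=\langle\tau_3,\varphi_\epsilon\rangle\,\mathrm{vol}_\epsilon=0$ (since $\tau_3\in\Omega^3_{27}$ is orthogonal to $\varphi_\epsilon$), this yields $7\tau_0\,\mathrm{vol}_\epsilon=\mathrm{d}\varphi_\epsilon\wedge\varphi_\epsilon$. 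It then remains to evaluate the top-degree form $\mathrm{d}\varphi_\epsilon\wedge\varphi_\epsilon$. Writing $V:=\eta_1\wedge\eta_2\wedge\eta_3\wedge\pi^*\mathrm{vol}_N$, so that $\mathrm{vol}_\epsilon=\epsilon a^2bc^4\,V$, I would expand the product of $\mathrm{d}\varphi_\epsilon$ (from the first step) with $\varphi_\epsilon$ (from Lemma~\ref{lem:ansatz}), using repeatedly $\omega_i\wedge\omega_j=2\delta_{ij}\pi^*\mathrm{vol}_N$ from~\eqref{eq:normalization} and $\pi^*\mathrm{vol}_N\wedge\omega_i=0$. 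Only finitely many terms survive: the pairing of $\pi^*\mathrm{vol}_N$ with the $\eta_{123}$-term of $\varphi_\epsilon$, and the pairings of the $\eta_{jk}\wedge\omega_i$-terms of $\mathrm{d}\varphi_\epsilon$ with the corresponding $\eta_i\wedge\omega_i$ (or $\eta_3\wedge\omega_3$) terms of $\varphi_\epsilon$. Collecting coefficients, dividing by $7\epsilon a^2bc^4$, and factoring out the common factor of $b$ that appears, one arrives at $\tau_0=\frac{4}{7a^2c^2}\bigl(4a(a^2+c^2)+\epsilon b(2a^2-c^2)\bigr)$.

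I do not expect a serious obstacle. The only genuine care required is the bookkeeping of wedge-product signs, namely commuting each $\eta_i$ past each $\omega_j$ (these are of complementary parity so that no sign appears) and handling the cyclic reorderings of $\eta_1\wedge\eta_2\wedge\eta_3$ correctly. The single conceptual input is the fact that coclosedness of the $\GG_2$-structure forces the torsion forms $\tau_1$ and $\tau_2$ to vanish; everything else is direct computation.
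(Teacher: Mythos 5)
Your proposal is correct and follows essentially the same route as the paper: the exterior derivative formulas come from direct substitution of $a_1=a_2=a$, $a_3=b$ into Lemma \ref{lem:torsion}, and $\tau_0$ is extracted by computing $\mathrm{d}\varphi_{\epsilon}\wedge\varphi_{\epsilon}=7\tau_0\,\mathrm{vol}_{\epsilon}$, with coclosedness killing $\tau_1$ and $\tau_2$ so that only the $\tau_0$ and $\tau_3$ terms remain. Your computed value matches the paper's $\mathrm{d}\varphi_{\epsilon}\wedge\varphi_{\epsilon}=4\epsilon bc^2\bigl(4a(a^2+c^2)+\epsilon b(2a^2-c^2)\bigr)\eta_1\wedge\eta_2\wedge\eta_3\wedge\pi^*\mathrm{vol}_N$, so no gaps.
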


\begin{proof}
The formulas for $\d\varphi_{\epsilon}$ and $\d\psi_{\epsilon}$ are immediate from Lemma \ref{lem:torsion}.  We then compute that
\begin{align*}
    \mathrm{d}\varphi_{\epsilon}\wedge\varphi_{\epsilon}&=4\big(\epsilon a^2bc^2(2a+\!\epsilon b)+2\epsilon abc^2(a^2+c^2)+bc^2(a^2b+2\epsilon ac^2-\!bc^2)\big)\eta_1\wedge\eta_2\wedge\eta_3\wedge\pi^*\mathrm{vol}_N\\
    &=4\epsilon bc^2\big(4a(a^2+c^2)+\epsilon b(2a^2-c^2)\big)\eta_1\wedge\eta_2\wedge\eta_3\wedge\pi^*\mathrm{vol}_N.
\end{align*}
 The formula for $\tau_0$ follows. 
\end{proof}

\begin{remark}
Lemma \ref{lem:torsion2} shows that, regardless of the choice of $\epsilon\in\{\pm 1\}$, we can always choose initial conditions for our flows of $\GG_2$-structures such that $\tau_0=0$ (and necessarily $\tau_3\neq 0$), even though we are trying to flow to nearly parallel $\GG_2$-structures, which must have $\tau_0\neq 0$ and $\tau_3=0$.
\end{remark}

\section{Laplacian coflow}

We start by studying the Laplacian coflow, which is arguably the natural flow for our ansatz of coclosed $\mathrm{G}_2$-structures since it manifestly preserves the coclosed condition.  We recall that this flow, if it is well-posed and stays within the ansatz, is given by
\begin{equation}\label{eq:coflow.ansatz}
    \frac{\partial}{\partial t}\psi_{\epsilon}(t)=\Delta_{\psi_{\epsilon}(t)}\psi_{\epsilon}(t)=\mathrm{d}\mathrm{d}^*_{\psi_{\epsilon}(t)}\psi_{\epsilon}(t),
\end{equation}
for the closed 4-forms $\psi_{\epsilon}(t)$ in \eqref{eq:psi.ansatz}.

\subsection{The flow equations} Since we have that
\[
\mathrm{d}\mathrm{d}^*_{\psi_{\epsilon}}\psi_{\epsilon}=\mathrm{d}*\mathrm{d}\varphi_{\epsilon},
\]
it is straightforward to compute the right-hand side of \eqref{eq:coflow.ansatz} from Lemma \ref{lem:torsion2} as follows.

\begin{lemma}\label{lem:Lap.psi}
The Hodge Laplacian of $\psi_{\epsilon}$ in \eqref{eq:psi.ansatz} is given by:
\begin{equation}\label{eq:Lap.psi}
\begin{split}
\Delta_{\psi_{\epsilon}}\psi_{\epsilon} &= 8\left(2a^2 +b^2 +2c^2 + \frac{2\epsilon bc^2}{a} - \frac{b^2c^2}{a^2}\right) \pi^{\ast} \textnormal{vol}_{N} \\
&\qquad - 4\Big(b^2+\frac{4\epsilon a^3 b}{c^2} + \frac{2a^2b^2}{c^2} + \frac{2\epsilon bc^2}{a} - \frac{b^2c^2}{a^2} \Big) (\eta_{2}\wedge\eta_3\wedge\omega_{1}+ \eta_{3}\wedge\eta_1\wedge\omega_{2} ) \\
&\qquad - 4\Big(2a^2 -b^2 + 2c^2+\frac{4\epsilon a^3 b}{c^2} + \frac{2a^2b^2}{c^2} -\frac{2\epsilon bc^2}{a} + \frac{b^2c^2}{a^2} \Big) \eta_{1}\wedge\eta_2\wedge\omega_{3}
\end{split}
\end{equation}
In particular, \eqref{eq:Lap.psi} is in  the same form as \eqref{eq:psi.ansatz} and so the Laplacian coflow  \eqref{eq:coflow.ansatz} is well-defined.
\end{lemma}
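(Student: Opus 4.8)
The plan is a direct computation in two stages: first apply the Hodge star $*$ (with respect to the metric $g_{\mathbf a,c}$ of \eqref{eq:metric.ansatz}) to the explicit expression for $\d\varphi_\epsilon$ recorded in Lemma \ref{lem:torsion2}, and then apply $\d$ to the result using the structure equations \eqref{eq:deta} and \eqref{eq:domega}. Since $\d\d^*_{\psi_\epsilon}\psi_\epsilon=\d{*}\d\varphi_\epsilon$, these two steps deliver the left-hand side of \eqref{eq:Lap.psi}; the content of the lemma is then simply that the resulting 4-form lies again in the span of $\pi^*\mathrm{vol}_N$, $\eta_2\wedge\eta_3\wedge\omega_1+\eta_3\wedge\eta_1\wedge\omega_2$, and $\eta_1\wedge\eta_2\wedge\omega_3$, i.e.\ has the shape of \eqref{eq:psi.ansatz}, which is exactly what is needed for the coflow to preserve the ansatz.

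First I would set up a convenient coframe. On the base we may work with a local orthonormal coframe for $g_N$ in which the self-dual forms $\omega_i$ take the standard hyperk\"ahler normal form (compatible with \eqref{eq:normalization}); together with the vertical coframe $\eta_1,\eta_2,\eta_3$ this gives an orthogonal coframe for $g_{\mathbf a,c}$ with $|\eta_1|=|\eta_2|=a$, $|\eta_3|=b$, and the horizontal legs of length $c$. From this one reads off the Hodge duals of the three building blocks: $*\pi^*\mathrm{vol}_N$, $*(\eta_2\wedge\eta_3\wedge\omega_1)$, etc., each being (a scalar in $a,b,c$ times) a 3-form of the type $\eta_i\wedge\omega_i$ appearing in $\varphi_\epsilon$. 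Concretely $*(\pi^*\mathrm{vol}_N)=\tfrac{a^2 b}{c^4}\,\eta_1\wedge\eta_2\wedge\eta_3$ up to the obvious sign conventions, and similarly $*(\eta_2\wedge\eta_3\wedge\omega_i)$ is a multiple of $\eta_1\wedge\omega_i$ with coefficient a monomial in $a,b,c$, using $*_N\omega_i=\omega_i$. Plugging the coefficients from Lemma \ref{lem:torsion2} into these gives $*\d\varphi_\epsilon$ as an explicit linear combination of $\eta_1\wedge\eta_2\wedge\eta_3$, $\eta_1\wedge\omega_1$, $\eta_2\wedge\omega_2$, $\eta_3\wedge\omega_3$ — a 3-form of exactly the form \eqref{eq:varphi.general}.

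Second I would differentiate. Using \eqref{eq:deta} in the form $\d(\eta_1\wedge\eta_2\wedge\eta_3)=-2(\omega_1\wedge\eta_2\wedge\eta_3+\eta_1\wedge\omega_2\wedge\eta_3+\eta_1\wedge\eta_2\wedge\omega_3)$ (the $\eta_j\wedge\eta_k$ terms in $\d\eta_i$ drop out upon wedging with the remaining $\eta$'s), and $\d(\eta_i\wedge\omega_i)=\d\eta_i\wedge\omega_i-\eta_i\wedge\d\omega_i$, where $\d\eta_i\wedge\omega_i=-2\eta_j\wedge\eta_k\wedge\omega_i-2\omega_i\wedge\omega_i=-2\eta_j\wedge\eta_k\wedge\omega_i$ by \eqref{eq:normalization} (note $\omega_i\wedge\omega_i=0$ is false — rather $\omega_i\wedge\omega_i=2\pi^*\mathrm{vol}_N$, which \emph{does} contribute) and $\eta_i\wedge\d\omega_i$ is handled by \eqref{eq:domega}. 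Collecting the $\pi^*\mathrm{vol}_N$ terms, the $\eta_2\wedge\eta_3\wedge\omega_1$ and $\eta_3\wedge\eta_1\wedge\omega_2$ terms (which by the $1\leftrightarrow 2$ symmetry of the ansatz must have equal coefficients), and the $\eta_1\wedge\eta_2\wedge\omega_3$ terms separately, and simplifying, should reproduce \eqref{eq:Lap.psi} exactly. The final sentence of the lemma — that $\Delta_{\psi_\epsilon}\psi_\epsilon$ has the same form as \eqref{eq:psi.ansatz} — is then immediate, since no other 4-form types were generated; this is the structural point that legitimizes reducing the coflow to an ODE system in $a,b,c$.

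The main obstacle is bookkeeping rather than conceptual: one must be scrupulous about the Hodge-star sign and orientation conventions (fixed by $\mathrm{vol}_{\mathbf a,c,\epsilon}$) and about the $\epsilon$-dependent signs in $\varphi_\epsilon$, since $\eta_3$ enters asymmetrically. I expect the cross-check $\omega_i\wedge\omega_i=2\pi^*\mathrm{vol}_N$ versus $\omega_i\wedge\omega_j=0$ for $i\neq j$ to be the place where a careless computation would go wrong, because these products feed the $\pi^*\mathrm{vol}_N$ coefficient, which is the most intricate one in \eqref{eq:Lap.psi}. A good consistency test is to specialize to the nearly parallel cases of Lemma \ref{lem:np} ($a=b=c$, $\epsilon=-1$, and $a=b=\tfrac{1}{\sqrt5}c$, $\epsilon=1$): there Lemma \ref{lem:torsion2} gives $\d\varphi_\epsilon=\lambda\psi_\epsilon$, so \eqref{eq:Lap.psi} must collapse to $\lambda^2\psi_\epsilon$, and one can verify the three coefficients in \eqref{eq:Lap.psi} indeed become proportional to the corresponding coefficients in \eqref{eq:psi.ansatz} with the common factor $\lambda^2$.
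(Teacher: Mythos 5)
Your proposal is correct and follows essentially the same route as the paper, which proves the lemma precisely by writing $\Delta_{\psi_\epsilon}\psi_\epsilon=\mathrm{d}\mathrm{d}^*_{\psi_\epsilon}\psi_\epsilon=\mathrm{d}\,{*}\,\mathrm{d}\varphi_\epsilon$, applying the Hodge star of the ansatz metric (with the $\epsilon$-dependent orientation, so e.g. $*\pi^*\mathrm{vol}_N=\frac{\epsilon a^2b}{c^4}\eta_1\wedge\eta_2\wedge\eta_3$, as recorded in \S4) to the expression in Lemma \ref{lem:torsion2}, and then differentiating with \eqref{eq:deta}, \eqref{eq:domega} and \eqref{eq:normalization}, exactly as you outline. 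Two literal slips to fix before carrying out the bookkeeping you rightly identify as the only real hazard: the correct identity is $\mathrm{d}(\eta_1\wedge\eta_2\wedge\eta_3)=-2(\eta_2\wedge\eta_3\wedge\omega_1+\eta_3\wedge\eta_1\wedge\omega_2+\eta_1\wedge\eta_2\wedge\omega_3)$, whereas your middle term $\eta_1\wedge\omega_2\wedge\eta_3=-\eta_3\wedge\eta_1\wedge\omega_2$ enters with the wrong sign as written (and would spuriously break the $1\leftrightarrow2$ symmetry), and the chain for $\mathrm{d}\eta_i\wedge\omega_i$ should read $\mathrm{d}\eta_i\wedge\omega_i=-2\eta_j\wedge\eta_k\wedge\omega_i-4\pi^*\mathrm{vol}_N$, consistent with your own parenthetical remark that $\omega_i\wedge\omega_i=2\pi^*\mathrm{vol}_N$ does contribute.
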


Given this result and \eqref{eq:psi.ansatz}, we may write down the Laplacian coflow \eqref{eq:coflow.ansatz} as the following system of ordinary differential equations for the coefficient functions $a(t)$, $b(t)$, $c(t)$:
\begin{align*}
\frac{\mathrm{d}}{\mathrm{d} t}(c^4) & = 8\left(2a^2 +b^2+ 2c^2 +\frac{2\epsilon bc^2}{a} - \frac{b^2c^2}{a^2}\right);  \\
\frac{\mathrm{d} }{\mathrm{d} t}(a^2c^2)& = 4\left(2a^2-b^2+2c^2+\frac{4\epsilon a^3b}{c^2}+\frac{2a^2b^2}{c^2}-\frac{2\epsilon bc^2}{a}   + \frac{b^2c^2}{a^2}\right); \\  
\frac{\mathrm{d}}{\mathrm{d} t}(abc^2) &= 4\left(\epsilon b^2+\frac{4a^3b}{c^2} + \frac{2\epsilon a^2b^2}{c^2} + \frac{2 bc^2}{a} -  \frac{\epsilon b^2c^2}{a^2}\right). 
\end{align*}
We can simplify the analysis of these equations by introducing new variables as follows.

\begin{lemma}\label{lem:coflow.eqns}
Define
\[
X=\frac{a^2}{c^2} \quad\text{and}\quad Y=\frac{ab}{c^2}
\]
and introduce a new variable $s$ by 
\[
\frac{\mathrm{d} s}{\mathrm{d} t}=\frac{1}{c^2}.
\]
If we let $\dot{X}=\frac{\mathrm{d} X}{\mathrm{d} s}$ and  $\dot{Y}=\frac{\mathrm{d} Y}{\mathrm{d} s}$, then the Laplacian coflow equations for \eqref{eq:psi.ansatz} imply that
\begin{align}
    \dot{X} &= \frac{4}{X^2}
    \left((X+1)Y^2+2\epsilon (2X^2-2X-1)XY-2X^2(2X-1)(X+1)\right);\label{eq:coflow1}\\
    \dot{Y} &=\frac{4Y}{X^2}
    \left(2(1-X)Y^2+\epsilon (2X^2-3X-1)Y+2X(1-2X)\right).\label{eq:coflow2}
\end{align}
\end{lemma}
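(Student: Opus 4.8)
The plan is to start from the three ODEs for $c^4$, $a^2c^2$, $abc^2$ displayed just before the lemma, and substitute the definitions $X = a^2/c^2$, $Y = ab/c^2$ together with the reparametrisation $\d s/\d t = 1/c^2$, to obtain closed expressions for $\dot X$ and $\dot Y$. First I would use the product rule to relate the $t$-derivatives of $X$ and $Y$ to those of $c^4$, $a^2c^2$, $abc^2$. The cleanest route is to note that, writing $u = a^2c^2$, $v = abc^2$, $w = c^4$, we have $X = u/w^{1/2}\cdot w^{-1/2}$—better yet, observe $X^2 = u^2/(a^2 w)\cdot\ldots$; rather than juggle square roots, I would instead compute logarithmic derivatives: since $\log X = \log(a^2 c^2) - \log(c^4) + \log(c^2) \cdot 0$—more directly, $X = (a^2c^2)^2/((c^4)(a^2c^2))$ is circular, so the honest thing is $X = a^2/c^2$, hence $\dot X/X = 2\dot a/a - 2\dot c/c$ where now dots are $s$-derivatives, and similarly $\dot Y/Y = \dot a/a + \dot b/b - 2\dot c/c$. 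From the three given ODEs (converted to $s$ via multiplication by $c^2$) one reads off $\frac{\d}{\d s}(c^4) = c^2\cdot(\text{RHS}_1)$, etc., and then solves the resulting linear system for $\dot a/a$, $\dot b/b$, $\dot c/c$.

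The key algebraic steps, carried out in order: (i) convert each of the three $t$-ODEs to an $s$-ODE by multiplying through by $c^2$; (ii) expand the left-hand sides — e.g. $\frac{\d}{\d s}(a^2c^2) = (a^2c^2)(2\dot a/a + 2\dot c/c)$ — and divide by the respective monomial to get three linear equations in the unknowns $p := \dot a/a$, $q := \dot b/b$, $r := \dot c/c$ whose right-hand sides, after dividing, become rational functions of $X$ and $Y$ only (this is the crucial point: every term on the right, when divided by $c^4$, $a^2c^2$ or $abc^2$ respectively, and using $c^2 \cdot \d s = \d t$, reduces to a function of the two ratios, because of the scaling weights — each RHS term has the same total degree in $(a,b,c)$ as the monomial it accompanies); (iii) solve the $3\times 3$ linear system for $p,q,r$; (iv) assemble $\dot X = X(2p - 2r)$ and $\dot Y = Y(p + q - 2r)$ and simplify to the stated forms \eqref{eq:coflow1}, \eqref{eq:coflow2}. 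One should double-check the scaling claim in (ii): writing the first RHS as $8(2a^2 + b^2 + 2c^2 + 2\epsilon bc^2/a - b^2c^2/a^2)$, dividing by $c^4$ and multiplying by the extra $c^2$ from the $s$-conversion gives $8(2a^2/c^2 + b^2/c^2 + 2 + \ldots)/c^0$; note $a^2/c^2 = X$, and $b^2/c^2 = Y^2/X$, and $bc^2/a \cdot 1/c^2 = b/a = Y/X$, and $b^2c^2/(a^2c^2) = Y^2/X^2$ — so indeed everything is a rational function of $X,Y$, confirming the mechanism.

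I expect the main obstacle to be purely the bookkeeping in step (iv): after solving the linear system one gets $p, q, r$ as somewhat bulky rational functions of $X$ and $Y$ with denominator a power of $X$ (and possibly $Y$), and then forming $2(p-r)$ and $(p+q-2r)$ and collecting terms to match the compact polynomial numerators $(X+1)Y^2 + 2\epsilon(2X^2-2X-1)XY - 2X^2(2X-1)(X+1)$ and $2(1-X)Y^2 + \epsilon(2X^2-3X-1)Y + 2X(1-2X)$ requires careful cancellation — in particular confirming that the overall denominator collapses to exactly $X^2$ (for $\dot X$) and that $\dot Y$ acquires the single extra factor of $Y$. A useful sanity check at the end is to verify that the nearly parallel fixed points of Lemma \ref{lem:np} are stationary: for $\varphi^{np}$ one has $a = b$ and $a/c = 1/\sqrt5$, i.e. $X = 1/5$, $Y = 1/5$, and for $\varphi^{ts}$ one has $a = b = c$, i.e. $X = Y = 1$; substituting these into \eqref{eq:coflow1}–\eqref{eq:coflow2} with the appropriate $\epsilon$ should give $\dot X = \dot Y = 0$, which both pins down sign conventions and catches arithmetic slips.
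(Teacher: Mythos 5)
Your proposal is correct and is exactly the computation the paper leaves implicit: converting the displayed ODEs for $c^4$, $a^2c^2$, $abc^2$ to the parameter $s$, extracting $\dot a/a$, $\dot b/b$, $\dot c/c$ (the system is effectively triangular, so ``solving'' it is immediate), and assembling $\dot X = X(2\dot a/a - 2\dot c/c)$ and $\dot Y = Y(\dot a/a + \dot b/b - 2\dot c/c)$; the scaling observation that every term reduces to a rational function of $X,Y$ is the right justification, and carrying out the bookkeeping does reproduce \eqref{eq:coflow1}--\eqref{eq:coflow2}. Your fixed-point sanity check against Lemma \ref{lem:np} is a sensible addition but not needed.
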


\noindent We note that $X$ and $Y$ are scale-invariant quantities and that solutions to \eqref{eq:coflow1}--\eqref{eq:coflow2}  give the solutions to the Laplacian coflow \eqref{eq:coflow.ansatz} up to rescaling.

\subsection{Critical points and dynamics} To understand the dynamics of the flow  \eqref{eq:coflow1}--\eqref{eq:coflow2}, we need to study its critical points.  Some straightforward calculations show the following.

\begin{lemma}\label{lem:coflow.critpts}
The only critical points for $X,Y>0$ to the system \eqref{eq:coflow1}--\eqref{eq:coflow2} are:
\begin{equation}\label{eq:coflow.crit.1}
X=Y=\frac{1}{5} \quad\text{and}\quad \epsilon=1
\end{equation}
and
\begin{equation}\label{eq:coflow.crit.2}
X=Y=1\quad\text{and}\quad \epsilon=-1.
\end{equation}
Moreover, if $\epsilon=1$ the condition $X=Y$ is preserved, but if $\epsilon=-1$ the condition $X=Y$ is not preserved except when $X=Y=1$.
\end{lemma}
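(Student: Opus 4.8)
The plan is to treat the two assertions of Lemma~\ref{lem:coflow.critpts} in turn; both come down to polynomial algebra, because the right-hand sides of \eqref{eq:coflow1}--\eqref{eq:coflow2} are polynomial once the overall factors $4/X^2$ and $4Y/X^2$ are stripped away.

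For the critical points, the first step is to note that since $X,Y>0$ the factors $4/X^2$ and $4Y/X^2$ never vanish, so a critical point is exactly a common zero of the two bracketed polynomials. I would then pass to the scale-free ratio $u=Y/X>0$ and divide the first bracket by $X^2$ and the second by $X$, so that a critical point with $X,Y>0$ becomes a common solution $u>0$ of the two quadratics
\begin{align*}
  &(X+1)u^2+2\epsilon(2X^2-2X-1)u-2(2X-1)(X+1)=0,\\
  &2X(1-X)u^2+\epsilon(2X^2-3X-1)u+2(1-2X)=0,
\end{align*}
whose coefficients are polynomial in $X$ and $\epsilon$. Since $\epsilon^2=1$, the resultant of these two quadratics with respect to $u$ is independent of $\epsilon$, and the computation works out (up to a harmless constant) to
\[
  \mathrm{Res}_u = 2(2X-1)^2(X-1)(5X-1)(6X^3+X^2+1).
\]
As $6X^3+X^2+1>0$ for $X>0$, this leaves only the candidates $X\in\{\tfrac12,1,\tfrac15\}$. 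I would finish by substituting each back into the two quadratics and reading off the common value of $u$ together with the sign of $\epsilon$ making it positive: at $X=\tfrac12$ the only common root is $u=0$, i.e.\ $Y=0$, which is not a $\GG_2$-structure and is discarded; at $X=\tfrac15$ the common root is $u=1$, positive only for $\epsilon=1$, giving \eqref{eq:coflow.crit.1}; and at $X=1$ the common root is $u=1$, positive only for $\epsilon=-1$, giving \eqref{eq:coflow.crit.2}. Note that at $X=1$ the second quadratic degenerates to a linear equation in $u$, so that value should be checked directly rather than trusted to the resultant.

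For the invariance statements, the plan is simply to set $Y=X$ in \eqref{eq:coflow1}--\eqref{eq:coflow2} and compare. When $\epsilon=1$ this gives $\dot X=\dot Y=4(1-5X)$ on $\{X=Y\}$, so the vector field is tangent to the line $\{X=Y\}$ and uniqueness of solutions forces the line to be invariant (with the induced one-dimensional flow having its single critical point at $X=Y=\tfrac15$). When $\epsilon=-1$ one instead gets $\dot X=-4(8X+5)(X-1)$ and $\dot Y=-4(4X+3)(X-1)$, hence $\dot Y-\dot X=8(2X+1)(X-1)$, which is nonzero for all $X>0$ except $X=1$; therefore any solution contained in $\{X=Y\}$ must have $X\equiv 1$, so the only invariant part of $\{X=Y\}$ is the critical point \eqref{eq:coflow.crit.2}.

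The main obstacle is the resultant computation for the critical points: it is the one genuinely lengthy calculation, and the two points requiring care are (i) recognising the factor $2X-1$ as spurious — it records the locus $Y=0$, which is excluded — and (ii) the degeneration of the second quadratic at $X=1$, which forces a separate direct check there. Beyond these, the argument is routine bookkeeping.
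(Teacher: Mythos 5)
Your proposal is correct, and it actually supplies more than the paper does: the paper simply asserts that the lemma follows from ``straightforward calculations'' and records no argument, so there is no written proof to diverge from. Your route — passing to $u=Y/X$, stripping the positive prefactors $4/X^2$ and $4Y/X^2$, and eliminating $u$ by a resultant — is a clean way to organise that calculation, and the key steps check out: the resultant is indeed independent of $\epsilon$ (every $\epsilon$-carrying coefficient enters quadratically), and I verified that it factors as $2(2X-1)^2(X-1)(5X-1)(6X^3+X^2+1)$, so the only candidates with $X>0$ are $X\in\{\tfrac12,\tfrac15,1\}$; your case checks at these values (including treating $X=1$ separately because the second quadratic degenerates to the linear equation $u=-\epsilon$, and discarding $X=\tfrac12$ since its only common root is $u=0$, i.e.\ $Y=0$) correctly yield exactly \eqref{eq:coflow.crit.1} and \eqref{eq:coflow.crit.2}. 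The invariance computations are also right: on $\{X=Y\}$ with $\epsilon=1$ one gets $\dot X=\dot Y=4(1-5X)$ (matching the equation displayed after the lemma in the paper), so tangency plus uniqueness gives invariance, while for $\epsilon=-1$ one gets $\dot Y-\dot X=8(2X+1)(X-1)\neq0$ for $X>0$, $X\neq1$, so the diagonal is not preserved except at the critical point. A more pedestrian alternative (presumably what the authors had in mind) is to solve the two bracketed equations directly by taking suitable combinations, but your elimination argument has the advantage of making the completeness of the list of critical points transparent.
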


 \begin{remark}\label{rmk:psi.np}  By Lemma \ref{lem:np}, the critical points \eqref{eq:coflow.crit.1}--\eqref{eq:coflow.crit.2} correspond to the 4-forms $\psi^{np}$ and $\psi^{ts}$ dual to the nearly parallel $\GG_2$-structures $\varphi^{np}$ and $\varphi^{ts}$ respectively.  Thus, Lemma \ref{lem:coflow.critpts} shows that the only critical points for \eqref{eq:coflow.ansatz} up to rescaling are $\psi^{np}$ and $\psi^{ts}$. 
\end{remark}

Before considering the general ansatz, we note that if we set $X=Y$ and $\epsilon=1$ in \eqref{eq:coflow1}--\eqref{eq:coflow2} then we obtain:
\begin{equation*}
    \dot{X}=4(1-5X).
\end{equation*}
Hence, $\dot{X}$ is positive for $X<1/5$ and negative for $X>1/5$,
which clearly shows the stability along the line $X=Y$ of the critical point \eqref{eq:coflow.crit.1}.  Thus $\psi^{np}$ is stable within the restricted ansatz \eqref{eq:ansatz} with $a=b$.

\begin{remark} \label{rem:CoflowSimple} 
Lemma \ref{lem:coflow.critpts} shows  that the coclosed $\GG_2$-structure with $\epsilon=1$ inducing the 3-Sasakian metric (up to scale), as well as the one with $\epsilon=-1$ inducing the squashed Einstein metric, have no significance for the Laplacian coflow.  It also shows that we need to use the ansatz \eqref{eq:ansatz} with $a$ and $b$ distinct (i.e.~allowing $X\neq Y$) to understand the Laplacian coflow for $\epsilon=-1$.
\end{remark}

We provide dynamic plots of the equations \eqref{eq:coflow1}--\eqref{eq:coflow2} in Figure \ref{fig:coflow.plots} for $\epsilon=\pm 1$.  In the plots, the curves $\gamma_X$ and $\gamma_Y$ across which $\dot{X}$ and $\dot{Y}$ change sign respectively are also shown, along with the line $X=Y$.

\begin{figure}[ht]\caption{Dynamic plots for Laplacian coflow for $\epsilon=1$ and $\epsilon=-1$}
    \centering
    \includegraphics[width=0.45\textwidth]{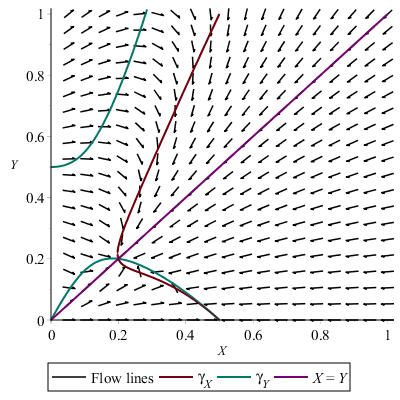}\qquad
     \includegraphics[width=0.45\textwidth]{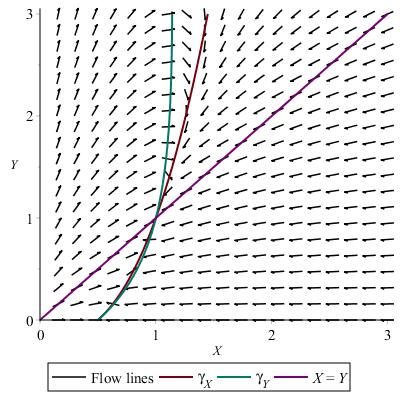}
    \label{fig:coflow.plots}
\end{figure}

Figure \ref{fig:coflow.plots} indicates that the critical points \eqref{eq:coflow.crit.1} and \eqref{eq:coflow.crit.2}, which correspond to $\psi^{np}$ and $\psi^{ts}$ as in Remark \ref{rmk:psi.np}, are both \emph{stable}.  We now show that this is indeed true.

\begin{proposition}
The $4$-forms $\psi^{np}$ and $\psi^{ts}$ dual to the nearly parallel $\GG_2$-structures $\varphi^{np}$ and $\varphi^{ts}$ are stable sinks under the Laplacian coflow \eqref{eq:coflow.ansatz}, after rescaling.
\end{proposition}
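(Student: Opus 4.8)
The plan is to work entirely with the planar autonomous system \eqref{eq:coflow1}--\eqref{eq:coflow2} on the open quadrant $\{X,Y>0\}$, since by the remark following Lemma \ref{lem:coflow.eqns} its trajectories recover the solutions of the Laplacian coflow \eqref{eq:coflow.ansatz} up to rescaling, and by Lemma \ref{lem:coflow.critpts} together with Remark \ref{rmk:psi.np} the only critical points there are $(X,Y)=(\tfrac15,\tfrac15)$ when $\epsilon=1$, corresponding to $\psi^{np}$, and $(X,Y)=(1,1)$ when $\epsilon=-1$, corresponding to $\psi^{ts}$. The first step is the linearization: compute the Jacobian of the right-hand side of \eqref{eq:coflow1}--\eqref{eq:coflow2} at the relevant critical point and check that its trace is negative and its determinant positive. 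Then both eigenvalues have strictly negative real part, so the critical point is (linearly, and hence locally) asymptotically stable, and in particular a sink rather than a saddle or a source — the contrast with the Laplacian flow and Ricci flow analyzed later.

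To upgrade local stability to genuine convergence (as in Theorem \ref{thm:coflow}), I would carry out a phase-plane analysis in $\{X,Y>0\}$. Observe first that $\{Y=0\}$ is invariant because of the overall factor of $Y$ in $\dot Y$, so trajectories starting in the open quadrant remain there; and near $X=0$ one has $\dot X\sim 4Y^2/X^2>0$, so $X$ is pushed away from the line $X=0$. Next, using the explicit nullclines $\gamma_X=\{\dot X=0\}$ and $\gamma_Y=\{\dot Y=0\}$ drawn in Figure \ref{fig:coflow.plots}, record the sign of $(\dot X,\dot Y)$ on each connected component of the quadrant that they bound, and check that for $X$ or $Y$ large the vector field turns back towards a bounded region, the dominant contributions being the cubic term $-8(2X-1)(X+1)$ in $\dot X$ and the term $-8(X-1)Y^3/X^2$ in $\dot Y$. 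Combining these observations should produce a compact subset of the open quadrant that every trajectory eventually enters and never leaves.

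On such a compact forward-invariant region containing the single critical point, Poincaré--Bendixson leaves only one alternative to convergence, namely a periodic orbit. I would rule this out by a Dulac-type argument: seek a positive weight $B(X,Y)$, a first guess being something like $B=X^{2}$ or $B=X^{2}/Y$ chosen to clear the $1/X^{2}$ (and possibly the factor $Y$), so that $\partial_X(B\dot X)+\partial_Y(B\dot Y)$ has a fixed sign throughout $\{X,Y>0\}$. With closed orbits excluded, Poincaré--Bendixson forces every trajectory in the quadrant to converge to the unique critical point; translating back through the scale-invariant variables $X,Y$ then says exactly that the rescaled Laplacian coflow converges to $\psi^{np}$ when $\epsilon=1$ and to $\psi^{ts}$ when $\epsilon=-1$, which are therefore stable sinks.

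The main obstacle is this global step, and specifically finding a device that eliminates periodic orbits (or, more directly, forces convergence) on the whole open quadrant for both values of $\epsilon$. A quick computation shows the obvious Dulac weights do not have a constant-sign divergence, so some care is needed here; the case $\epsilon=-1$ is the genuinely two-dimensional one, since the invariant line $X=Y$ that simplifies the $\epsilon=1$ analysis (along which $\dot X=4(1-5X)$) is lost. If no clean Dulac function presents itself, the fallback is a hands-on argument: subdivide the quadrant along $\gamma_X$ and $\gamma_Y$ into sectors on which $(X,Y)$ is monotone, track how trajectories cross these nullclines, and show directly that they funnel into the critical point — or alternatively produce a Lyapunov function from a suitable norm of the displacement from the critical point. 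The linearization and the sign bookkeeping of the earlier steps are then routine, and this is where the real effort lies.
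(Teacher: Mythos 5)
Your first step is exactly the paper's proof of this Proposition: linearize \eqref{eq:coflow1}--\eqref{eq:coflow2} at $(X,Y)=(\tfrac15,\tfrac15)$, $\epsilon=1$ and at $(X,Y)=(1,1)$, $\epsilon=-1$, and check both eigenvalues are negative (the paper finds $-64/5,\,-20$ and $-4,\,-64$ respectively, so your trace/determinant test indeed passes), which is all that the statement ``stable sinks'' requires. The remainder of your proposal --- a trapping region, Poincar\'e--Bendixson and a Dulac function --- is aimed at the global convergence statement, which in the paper is the separate Theorem \ref{thm:coflow.2}; there the authors do not search for a Dulac weight (you admit you have not found one) but instead exclude periodic orbits by noting the Laplacian coflow is a gradient flow, and they obtain the needed bounds on $X$ and $Y$ by nullcline sign analysis plus a Gr\"onwall comparison, together with a linearized analysis near the collapsed point $(\tfrac12,0)$ to keep $Y$ away from $0$. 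So for the Proposition itself your argument is correct and identical in method to the paper's; the extra global programme is not needed here and, as written, still has its key device (the constant-sign Dulac divergence, or equivalently the compact forward-invariant region) unestablished.
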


\begin{proof}
We  study the linearization of the flow equations \eqref{eq:coflow1}--\eqref{eq:coflow2} at the critical points  \eqref{eq:coflow.crit.1} and \eqref{eq:coflow.crit.2} to determine their stability. 

At $X=Y=1/5$ and $\epsilon=1$, the linearized equations are
\begin{align*}
\dot{X} = -\frac{76}{5} X - \frac{24}{5}Y\quad\text{and}\quad
\dot{Y} = -\frac{12}{5}X -\frac{88}{5}Y.
\end{align*}
(Note that $X=Y$ is preserved by the above system as expected.)
The associated $2\times 2$ matrix of  coefficients of $X,Y$ in the above equations has two negative eigenvalues ($-64/5$ and $-20$) and so \eqref{eq:coflow.crit.1} is a stable critical point. 

Similarly, at $X=Y=1$ and $\epsilon=-1$, the linearized equations are 
\begin{align*}
\dot{X} = -76X + 24Y \quad\text{and}\quad
 \dot{Y} = -36X + 8Y,
\end{align*}
noting that this time $X=Y$ is not preserved.  Here, the matrix one obtains again has two negative eigenvalues, which are $-4$ and $-64$, so the critical point \eqref{eq:coflow.crit.2} is stable.
\end{proof}

\begin{remark}\label{rmk:coflow.collpase}
We see from \eqref{eq:coflow1}--\eqref{eq:coflow2} that if we allow $X=0$ or $Y=0$ then there are additional critical points:
\[
(X,Y)=\left(\frac{1}{2},0\right)\quad\text{for $\epsilon=\pm 1$.}
\]
We can see these critical points  in Figure \ref{fig:coflow.plots}. We can also consider $(X,Y)=(0,0)$ to be a degenerate critical point, even though the equations \eqref{eq:coflow1}--\eqref{eq:coflow2} are not defined there. 
%
%
We can understand these additional critical points geometrically as follows.

Recall the fibration \eqref{eq:M.Z.pi} of $M^7$ over $N^4$.  The point $(0,0)$ corresponds to sending the 3-dimensional fibres of \eqref{eq:M.Z.pi} to zero size (since $a=b=0$), and so $M$ has collapsed to $N$ (or a point).  In this setting, the 4-form $\psi_{\epsilon}$ reduces to simply the volume form of $N$ (or zero if the collapse is to a point).

If we instead view the fibres of \eqref{eq:M.Z.pi} as circle bundles over $S^2$ (where  $E_3$ is tangent to the circle direction in the notation of Definition \ref{dfn:3Sak}), then at $(1/2,0)$ the circle fibres have now collapsed (as $b=0$ in \eqref{eq:ansatz}).  Since $a\neq 0$ there $M^7$ has collapsed to a 6-manifold $Z$ which is a 2-sphere bundle over $N$.  This 6-manifold $Z$ is the \emph{twistor space} of $N$, and at this critical point it will be endowed with its \emph{nearly K\"ahler} metric $g_Z$, which is an Einstein metric on $Z$ with positive scalar curvature.  This Einstein metric $g_Z$ is  not K\"ahler (unlike the  standard choice of metric on the twistor space), but instead is related to $\GG_2$ geometry as the metric cone over a 6-dimensional nearly K\"ahler manifold will have holonomy $\GG_2$.
\end{remark}

\subsection{Long-time behaviour}  The plots in Figure \ref{fig:coflow.plots} suggest that, within our ansatz,  any initial condition  flows to the unique (up to scale) nearly parallel $\GG_2$-structure in the family.  We now show that this is indeed the case.  For the statement, as in Remark \ref{rmk:psi.np}, we denote by $\psi^{np}$ and $\psi^{ts}$ the duals of the nearly parallel $\GG_2$-structures $\varphi^{np}$ and $\varphi^{ts}$ defined in Lemma \ref{lem:np}, and recall that they induce the squashed Einstein metric  and 3-Sasakian  metric  respectively.


\begin{theorem}\label{thm:coflow.2}
Let $\psi_{\epsilon}=\psi_\epsilon(0)$ be a closed $4$-form as in \eqref{eq:psi.ansatz} dual to a $\GG_2$-structure.  The solution to the Laplacian coflow \eqref{eq:coflow.ansatz} starting at $\psi_\epsilon$   converges, after rescaling, to $\psi^{np}$ if $\epsilon=1$ and to  $\psi^{ts}$ if $\epsilon=-1$, which are the only critical points of the rescaled flow.  In particular, the nearly parallel $\GG_2$-structures given by $\psi^{np}$ and $\psi^{ts}$ are stable for \eqref{eq:coflow.ansatz} after rescaling.
\end{theorem}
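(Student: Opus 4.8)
The plan is to analyze the planar autonomous system \eqref{eq:coflow1}--\eqref{eq:coflow2} on the open first quadrant $\{X>0,Y>0\}$ for each fixed $\epsilon\in\{\pm1\}$, and show that every forward trajectory converges to the unique critical point identified in Lemma \ref{lem:coflow.critpts}. Since the $2$-form plots in Figure \ref{fig:coflow.plots} already suggest the nullclines $\gamma_X$ (where $\dot X=0$) and $\gamma_Y$ (where $\dot Y=0$) divide the quadrant into a handful of regions with definite signs of $\dot X,\dot Y$, the first step is to make this rigorous: write $\dot X = \tfrac{4}{X^2}P(X,Y)$ and $\dot Y=\tfrac{4Y}{X^2}R(X,Y)$ with $P,R$ polynomial, and for each $\epsilon$ describe the curves $\{P=0\}$ and $\{R=0\}$ explicitly (they are graphs of $Y$ over $X$, since $P$ and $R$ are quadratic in $Y$ with coefficients of one sign on the relevant range). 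The unique critical point is the transverse intersection of these two curves in the open quadrant, and one reads off from the sign pattern of $(P,R)$ in the complementary regions that the vector field points ``inward'' toward the critical point along the boundaries between regions.

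Next I would establish that the flow is forward-complete in the $s$-variable and that trajectories cannot escape to the boundary $\{X=0\}\cup\{Y=0\}$ or to infinity. The line $Y=0$ is invariant (it is a factor of $\dot Y$), so by uniqueness no trajectory with $Y(0)>0$ can reach it; a short argument shows $X$ is likewise bounded away from $0$ along any trajectory — e.g.\ by checking the sign of $\dot X$ when $X$ is small, where $P(X,Y)\approx (X+1)Y^2+2\epsilon(\cdot)XY - 2X^2(\cdots)$ has the $Y^2$ term dominating, forcing $\dot X>0$. For the upper bound, one exhibits a large box or a trapping region: on the curves $X=$ const and $Y=$ const with the constants large, $P<0$ and $R<0$ respectively, so $\dot X<0$ and $\dot Y<0$ there, confining the trajectory. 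This produces a compact positively invariant region containing the critical point.

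With a compact invariant region in hand, I would invoke the Poincaré--Bendixson theorem: the $\omega$-limit set of any trajectory is either the critical point, a periodic orbit, or a cycle of connecting orbits. To rule out periodic orbits and homoclinic/heteroclinic cycles I would use a Dulac/Bendixson criterion — find a positive function $h(X,Y)$ (a power $X^\alpha Y^\beta$ is the natural first guess, given the $1/X^2$ and $Y/X^2$ prefactors) such that $\partial_X(h\dot X)+\partial_Y(h\dot Y)$ has a fixed sign on the quadrant. Combined with the earlier linearization computations (both eigenvalues negative at each critical point, so it is a sink), Poincaré--Bendixson then forces every trajectory in the invariant region to converge to the critical point. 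Finally I would observe that the trapping region can be taken to exhaust the whole open quadrant — any initial $(X(0),Y(0))$ lies in some such box — so convergence holds for all admissible initial data; translating back via $X=a^2/c^2$, $Y=ab/c^2$ and the reparametrization $\d s/\d t = 1/c^2$, and recalling from Lemma \ref{lem:np} that $(X,Y)=(\tfrac15,\tfrac15)$ with $\epsilon=1$ gives $\psi^{np}$ and $(X,Y)=(1,1)$ with $\epsilon=-1$ gives $\psi^{ts}$, yields the stated convergence after rescaling.

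The main obstacle I expect is the Dulac-function step for $\epsilon=-1$: the $\epsilon=1$ case collapses essentially to the one-dimensional equation $\dot X=4(1-5X)$ on the invariant line $X=Y$ and is handled by a monotonicity argument, but for $\epsilon=-1$ the line $X=Y$ is not invariant, the cubic/quartic terms in $P,R$ make the divergence $\partial_X(h\dot X)+\partial_Y(h\dot Y)$ genuinely a polynomial of higher degree, and finding weights $(\alpha,\beta)$ that make it sign-definite on all of the first quadrant may require either a more clever multiplier or splitting the quadrant and arguing region-by-region (e.g.\ showing a trajectory, once it enters the region between the two nullclines near the critical point, is monotone in both coordinates and hence convergent). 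A secondary subtlety is making the ``trapping box exhausts the quadrant'' claim airtight — one must check the outer edges of arbitrarily large boxes are indeed ingoing, which comes down to the leading-order signs of $P$ and $R$, and those should be unambiguous.
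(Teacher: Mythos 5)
Your overall framing (nullclines, sign regions, boundedness, then convergence) matches the paper's strategy, but there is a genuine gap at the step you rely on for compactness: the ``large box'' trapping region does not exist. Both $P$ and $R$ are quadratic in $Y$ with leading coefficients $(X+1)>0$ and $2(1-X)$ respectively, so for any fixed $X\in(0,1)$ and $Y$ large one has $\dot Y>0$ (outgoing through the top edge of your box over the strip $X<1$), and for $Y\gg X$ one also has $\dot X>0$; equivalently, both nullclines $\gamma_X$ and $\gamma_Y$ are unbounded (the branch of $\gamma_Y$ asymptotes to $X=1$), so no rectangle with large, ingoing outer edges exists, and the claim that ``the trapping region can be taken to exhaust the whole open quadrant'' fails. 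This is precisely the hard analytic content of the theorem: one must rule out trajectories with $X$ trapped in a bounded interval ($(0,1)$ for $\epsilon=1$, $(0,\bar X)$ for $\epsilon=-1$) while $Y\to\infty$. The paper does this by a comparison argument: in that regime $\dot X\geq \tfrac{4}{X}Y^2$ while $\dot Y\leq \tfrac{8(1-X)}{X^2}Y^3$ (resp.\ $\tfrac{8(2-X)}{X^2}Y^3$), giving $\tfrac{\d Y}{\d X}\leq \tfrac{2(1-X)}{X}Y$ (resp.\ $\tfrac{2(2-X)}{X}Y$), and Gr\"onwall then bounds $Y$ in terms of $X$, a contradiction. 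Some argument of this kind (or another substitute) is indispensable and is missing from your proposal.

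Two further points. First, the invariance of $\{Y=0\}$ only prevents a trajectory from reaching $Y=0$ in finite time; it does not prevent the $\omega$-limit set from containing the boundary critical point $(X,Y)=(\tfrac12,0)$, which geometrically corresponds to collapse to the nearly K\"ahler twistor space rather than convergence to $\psi^{np}$ or $\psi^{ts}$. The paper excludes this by linearizing at $(\tfrac12,0)$ and showing $Y$ stays bounded away from $0$ in the relevant region; you need an analogue. Second, for excluding periodic orbits you propose a Dulac function that you have not produced (and you correctly flag $\epsilon=-1$ as problematic); the paper sidesteps this entirely by observing that the Laplacian coflow is a gradient flow (of the Hitchin volume functional), so closed orbits are impossible -- you could simply adopt that argument in place of the Dulac/Poincar\'e--Bendixson machinery once boundedness and the boundary behaviour are settled.
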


Theorem \ref{thm:coflow.2} gives Theorem \ref{thm:coflow} in the introduction. 
The proof of Theorem \ref{thm:coflow.2} is quite lengthy, so we break it up into several smaller results.

\subsubsection{Strategy}\label{sss:strategy} 
Our aim is to prove that, given any initial condition, the flow \eqref{eq:coflow1}--\eqref{eq:coflow2} for $(X(s),Y(s))$ exists for all $s$ and converges to the critical point with $X>0$ and $Y>0$.  From \eqref{eq:coflow1}--\eqref{eq:coflow2}, long time existence will be guaranteed as long as $X,Y$ remain bounded and $X$ remains bounded away from $0$.  Moreover,  periodic orbits are not possible as we know the Laplacian coflow is a gradient flow. Hence, long time existence will imply convergence to the critical point with $X,Y>0$ if we additionally know that $Y$ remains bounded away from $0$.   

To deduce the result for the Laplacian coflow we observe that the evolution equation for $c^2$, which determines the parameter $s$ by Lemma \ref{lem:coflow.eqns}, is:
\[
\frac{\d}{\d t}(c^2)=8\left(2X+\frac{Y^2}{X}+2+2\epsilon\frac{Y}{X}-\frac{Y^2}{X^2}\right),
\]
so $c^2$ cannot blow up in finite time because $X,Y$ are bounded and bounded away from zero.  In fact, $c^2$ grows at most linearly, so we can integrate $\frac{\d s}{\d t}=\frac{1}{c^2}$ to find $s$ as a function of the Laplacian coflow parameter $t$.

Throughout the proof, we denote the curves where $\dot{X}$ and $\dot{Y}$ change sign, respectively, by $\gamma_X$ and $\gamma_Y$ as in Figure \ref{fig:coflow.plots}.  

\subsubsection{Bounds on $X$} We start by looking at the behaviour of $X$.

\begin{lemma}\label{lem:X.bound}
The function $X(s)$ is bounded away from zero and can only diverge in the region to the left of $\gamma_X$.
\end{lemma}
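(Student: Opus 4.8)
The plan is to analyse the sign of $\dot X$ near the boundary $X=0$ and on the curve $\gamma_X$, using the explicit formula \eqref{eq:coflow1}. First I would look at the behaviour of $\dot X$ as $X\to 0^+$ with $Y$ bounded: in that regime the dominant term in $X^2\dot X = 4\bigl((X+1)Y^2+2\epsilon(2X^2-2X-1)XY-2X^2(2X-1)(X+1)\bigr)$ is $4Y^2>0$, so $\dot X$ is positive and in fact blows up like $4Y^2/X^2$ as $X\to 0$. Hence near $X=0$ the flow pushes $X$ strictly \emph{away} from zero, so $X$ cannot reach or approach $0$ along the flow as long as $Y$ stays bounded away from $\infty$ on the relevant time interval. (If one is worried about $Y$ simultaneously degenerating, one should combine this with the fact, to be established separately in the companion lemmas, that $Y$ is bounded and bounded away from $0$; but for the qualitative statement here the key point is just the sign of $\dot X$ near the left edge.) This gives the ``bounded away from zero'' half of the statement.

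For the second half — that $X$ can only diverge to the left of $\gamma_X$ — the plan is to recall that $\gamma_X$ is exactly the locus $\{\dot X = 0\}$, i.e.\ the zero set of the cubic-in-$X$ expression $P(X,Y):=(X+1)Y^2+2\epsilon(2X^2-2X-1)XY-2X^2(2X-1)(X+1)$, and to show that to the \emph{right} of $\gamma_X$ (the region where $\dot X<0$) the function $X$ is decreasing, hence cannot diverge there. The one subtlety is that ``right of $\gamma_X$'' must be interpreted along the actual flow: I would argue that the flow cannot cross $\gamma_X$ from the region $\{\dot X<0\}$ into $\{\dot X>0\}$ in a way that lets $X\to\infty$, because on $\{\dot X<0\}$ the value of $X$ is monotonically decreasing in $s$ and so stays below its initial value; divergence of $X$ can therefore only happen while the trajectory sits in the region $\{\dot X>0\}$, which (for fixed bounded $Y$) is precisely the region to the left of $\gamma_X$ where $P>0$. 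So the statement reduces to: (i) $P>0$ near $X=0$, giving a repelling left boundary; (ii) $\{\dot X<0\}$ is forward-invariant against $X$-divergence since $X$ decreases there; (iii) identifying $\{\dot X>0\}$ with ``left of $\gamma_X$''. Steps (i)–(iii) are each short sign computations with the polynomial $P$.

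The main obstacle I anticipate is bookkeeping the interaction with $Y$: the formula for $\dot X$ depends on $Y$, so ``left of $\gamma_X$'' is a moving target, and to make ``$X$ decreasing on $\{\dot X<0\}$'' genuinely prevent divergence one needs the trajectory not to oscillate across $\gamma_X$ infinitely while $Y$ drifts. I would handle this by noting that $Y$ is controlled by the parallel bounds on $Y$ (the next lemmas in the paper) so that $\gamma_X$ lies in a bounded band, and then a straightforward comparison argument — whenever $X$ is large, $X$ lies to the right of the whole band $\gamma_X$ and hence $\dot X<0$, forcing $X$ to come back down — closes the loop. Thus the crux is the elementary but slightly fiddly claim that for $X$ sufficiently large (and $Y$ in its a priori range) one has $P(X,Y)<0$, which follows because the leading term $-2X^2(2X-1)(X+1)\sim -4X^4$ dominates the $Y^2 X$ and $X^3 Y$ terms; this I would state and verify as the key estimate.
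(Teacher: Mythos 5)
Your proposal is correct and takes essentially the same route as the paper: the paper's proof is precisely the sign analysis of $\dot X$ relative to $\gamma_X$ --- $X$ is decreasing to the right of $\gamma_X$, so it stays below its initial value and cannot diverge there, and increasing in the region to the left of $\gamma_X$, which contains $X=0$, so it stays bounded away from zero. Your additional worries about $Y$ and the large-$X$ comparison are not needed for this lemma as stated; as in the paper, the actual boundedness of $X$ is deferred to the later lemmas controlling $Y$.
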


\begin{proof}
We first see that $X$ is decreasing whenever it is to the right of $\gamma_X$ and so $X$ will remain bounded by its initial condition in this region.  We then see that $X$ is increasing in the region to the left of $\gamma_X$, which is the region containing $X=0$, and so $X$ is bounded away from zero in this region by its initial condition.  
\end{proof}

Since $\gamma_X$ meets $Y=X$ (at the critical point), the unbounded part of $\gamma_X$ lies above the line $Y=X$, i.e.~where $Y>X$.  Therefore, it suffices to show that $Y$ remains bounded in the region to the left of $\gamma_X$ where additionally $Y>X$ to deduce that $X$ is bounded everywhere.

\subsubsection{Bounds on $Y$: $\epsilon=1$} Given our earlier discussion, we now turn to showing that $Y$ remains bounded and bounded away from zero. We start with the case $\epsilon=1$.  

\begin{lemma}\label{lem:1.Y.bound}
When $\epsilon=1$, $Y(s)$ can only diverge in the region above the upper part of $\gamma_Y$ and can only tend to zero in the region above the lower part of $\gamma_Y$ but below the line $Y=X$.
\end{lemma}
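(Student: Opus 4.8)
The plan is to analyse the sign of $\dot{Y}$ as given by \eqref{eq:coflow2}. Since $Y>0$, the sign of $\dot{Y}$ is governed by the cubic-in-$Y$ factor
\[
F(X,Y) = 2(1-X)Y^2+(2X^2-3X-1)Y+2X(1-2X),
\]
obtained by setting $\epsilon=1$. The curve $\gamma_Y$ is exactly $\{F(X,Y)=0\}$ together with $\{Y=0\}$. First I would record the elementary geometry of $\gamma_Y$: it passes through the critical point $(1/5,1/5)$, and for $X\neq 1$ the equation $F(X,Y)=0$ is a quadratic in $Y$ whose two roots (where real) I would locate relative to the line $Y=X$ by evaluating $F(X,X) = X(1-5X)(X+1)$ — note this is exactly (a multiple of) the restriction $\dot X$ computed just before Remark \ref{rem:CoflowSimple}, consistent with $\gamma_X$ and $\gamma_Y$ both meeting $Y=X$ only at the critical point. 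So $F(X,X)>0$ for $X<1/5$ and $F(X,X)<0$ for $X>1/5$, which pins down on which side of $Y=X$ each branch of $\gamma_Y$ lies.

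The substantive point is the divergence claim. Suppose $Y\to\infty$ along the flow. For large $Y$ the leading term of $F$ is $2(1-X)Y^2$, so if $X$ stays bounded away from $1$ from below (i.e. $X<1-\delta$) then $F>0$ eventually, hence $\dot Y>0$ is consistent; if $X>1+\delta$ then $F<0$ eventually, forcing $\dot Y<0$, a contradiction. The delicate case is $X\to 1$. Here I would combine Lemma \ref{lem:X.bound}: $X$ can only diverge to the left of $\gamma_X$, and more importantly the region where $Y$ is large forces us to be to the \emph{left} of $\gamma_X$ as well (since, examining \eqref{eq:coflow1}, for large $Y$ with $X$ bounded, $\dot X \sim \frac{4(X+1)Y^2}{X^2}>0$, so $X$ is increasing there and cannot be decreasing, i.e. we are not to the right of $\gamma_X$). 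Thus divergence of $Y$ occurs to the left of $\gamma_X$, and I need to show this region meets $\{Y \text{ large}\}$ only \emph{above the upper branch of }$\gamma_Y$. Concretely: the upper branch of $\gamma_Y$ for $X<1$ has $Y\to\infty$ as $X\uparrow 1$ (since the $Y^2$ coefficient degenerates), while for $X>1$ the product of the two roots is $\frac{2X(1-2X)}{2(1-X)}>0$ and their sum is $\frac{-(2X^2-3X-1)}{2(1-X)}$, which I would show is negative for $X>1$, so both roots are negative and there is \emph{no} positive root — meaning for $X>1$ all of $\{Y>0\}$ lies "above" $\gamma_Y$ in the sense that $F<0$, i.e. $\dot Y<0$. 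Assembling these facts shows any trajectory with $Y$ large and increasing must sit above the upper part of $\gamma_Y$.

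For the second claim — $Y\to 0$ only in the region above the lower part of $\gamma_Y$ but below $Y=X$ — I would argue similarly near $Y=0$: $F(X,0)=2X(1-2X)$, which is positive for $X<1/2$ and negative for $X>1/2$. So for $X<1/2$ we have $\dot Y>0$ near $Y=0$, pushing $Y$ up, hence $Y$ cannot tend to zero there unless it approaches along the lower branch of $\gamma_Y$ from above; for $X>1/2$ one checks using Lemma \ref{lem:X.bound} and the sign of $\dot X$ that the relevant part of configuration space is excluded, or that $X$ itself is controlled. The constraint "below $Y=X$" comes from the sign analysis of $F(X,X)$ above: on and above $Y=X$ with $X<1/5$ we have $F>0$ so $\dot Y>0$, incompatible with $Y\downarrow 0$; combined with $X$ bounded away from $0$ (Lemma \ref{lem:X.bound}), $Y\to 0$ forces $Y<X$, and then tracking which side of the lower branch of $\gamma_Y$ one lies on gives the full statement.

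The main obstacle I anticipate is the case $X\to 1$ (where the quadratic in $Y$ degenerates to linear): one must rule out the scenario of $X$ converging to $1$ while $Y\to\infty$ in a balanced way that keeps $F$ small. I expect this is handled by showing the product $(1-X)Y^2$ or $(1-X)Y$ stays bounded below along such a hypothetical trajectory using \eqref{eq:coflow1}–\eqref{eq:coflow2} directly — comparing the growth rates of $X$ and $Y$ — but it will require a careful, slightly fiddly estimate rather than the clean sign-chasing that suffices elsewhere.
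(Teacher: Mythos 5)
Your basic approach---reading off the sign of $\dot Y$ from the quadratic factor $F(X,Y)=2(1-X)Y^2+(2X^2-3X-1)Y+2X(1-2X)$ whose zero set is $\gamma_Y$---is the same sign-chasing that the paper's short proof uses, but several concrete steps in your write-up are wrong or missing. (i) The claim that ``on and above $Y=X$ with $X<1/5$ we have $F>0$'' is false: e.g.\ $F(0.1,0.3)=0.162-0.384+0.16<0$, since between the two branches of $\gamma_Y$ one has $F<0$ even above the line $Y=X$. The ``below $Y=X$'' clause is instead obtained from the invariance of the line $Y=X$ (Lemma \ref{lem:coflow.critpts}) together with $X$ being bounded away from zero (Lemma \ref{lem:X.bound}); your fallback remark that $X\geq\delta>0$ forces $Y<X$ once $Y\to 0$ does salvage this clause, but the sign claim you lead with fails. (Also $F(X,X)=X(1-5X)$, not $X(1-5X)(X+1)$, though the sign is unaffected.) (ii) Your root analysis for $X>1$ is not correct as stated: the sum of roots $\frac{1+3X-2X^2}{2(1-X)}$ is negative only for $1<X<\frac{3+\sqrt{17}}{4}$, and $F$ does become positive at some points with $X$ large (e.g.\ $F(12,5)=153>0$), so ``no positive root for $X>1$'' cannot be shown the way you propose; moreover your phrase that for $X>1$ the set $\{Y>0\}$ lies ``above $\gamma_Y$ in the sense that $F<0$'' inverts the lemma's convention, since ``above the upper part of $\gamma_Y$'' is precisely where $\dot Y>0$ and that half-plane is \emph{not} part of the divergence region. (iii) For $X>1/2$ near $Y=0$ you try to ``exclude'' $Y\to 0$, but the lemma does not exclude it: that region lies above the lower branch of $\gamma_Y$ and below $Y=X$, and is exactly the approach to the collapsed point $(1/2,0)$, which is ruled out only later in Lemma \ref{lem:1.Y.bound.0}; the appeal to ``$X$ itself is controlled'' is not an argument.

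Finally, the ``main obstacle'' you flag---a trajectory with $X\to 1$ and $Y\to\infty$ keeping $F$ small---is not an obstacle for this lemma, so leaving it as an admitted fiddly estimate creates a gap where none is needed: if such a trajectory has $X<1$ it lies above the upper branch of $\gamma_Y$, which is consistent with the statement, and wherever $\dot Y<0$ the value of $Y$ cannot be increasing. The genuinely delicate question of whether $Y$ can in fact blow up with $X\in(0,1)$ is the content of the separate Proposition \ref{prop:1.Y.unbounded}, proved by exactly the Gr\"onwall-type comparison you anticipate. For the lemma itself the paper only needs the bookkeeping: $\dot Y<0$ in the region above the lower part of $\gamma_Y$ and below or to the right of the upper part (so $Y$ cannot diverge there), $\dot Y>0$ in the bounded region under the lower branch (so $Y$ can neither diverge nor tend to zero there), and the $Y=X$-invariance/$X$-bounded-below argument for the final clause. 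As written, your proposal both leaves an unnecessary admitted gap and rests on the incorrect sign claims above, so it does not yet prove the statement.
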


\begin{proof}
In this case,  $\dot{Y}<0$ in the region above the lower part of $\gamma_Y$ and below or to the right of the upper part of $\gamma_Y$, so  $Y$ remains bounded by its initial value in this region.  Note that $\gamma_Y$ meets the line $X=0$ at $Y=0$ and $Y=1/2$.  Hence, in the same region we just considered, we are either to the left of $\gamma_X$ and so $\dot{X}>0$, which means  we cannot reach $(0,0)$, or we are to the right of $\gamma_X$ but also above $\gamma_Y$.  In this latter region, if we are above the line $Y=X$ we cannot cross it by Lemma \ref{lem:coflow.critpts}, and so $Y$ remains bounded away from zero here.   

We then notice that $\dot{Y}>0$  in the region bounded by the lower part of $\gamma_Y$, in which $Y$ is bounded, and so $Y$ is additionally bounded away from $0$ here.  
\end{proof}

We now have our crucial observation for the case $\epsilon=1$. 

\begin{lemma}\label{lem:1.Y.bound.2}
When $\epsilon=1$, $Y(s)$ is decreasing and hence bounded in the region where $Y>X$ and $X\geq 1$.
\end{lemma}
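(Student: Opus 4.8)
The plan is to show that $\dot Y<0$ at every point of the region $\{Y>X,\ X\geq 1\}$; boundedness of $Y$ there then follows immediately, since by Lemma~\ref{lem:coflow.critpts} the condition $Y>X$ is preserved when $\epsilon=1$, so any trajectory entering this region has $Y$ strictly decreasing for as long as it remains in it, and hence $Y$ stays bounded above by its value at the moment of entry.

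Setting $\epsilon=1$ in \eqref{eq:coflow2}, the prefactor $4Y/X^2$ is positive, so the sign of $\dot Y$ is the sign of the quadratic in $Y$
\[
Q(X,Y):=2(1-X)Y^2+(2X^2-3X-1)Y+2X(1-2X).
\]
First I would dispose of the boundary case $X=1$, where $Q(1,Y)=-2Y-2<0$ for all $Y>0$. For $X>1$ the leading coefficient $2(1-X)$ is negative, so $Q(X,\cdot)$ is a downward-opening parabola in $Y$, and it suffices to check two facts: (i) $Q(X,X)<0$ for $X\geq 1$; and (ii) the vertex $Y_\ast$ of $Q(X,\cdot)$ satisfies $Y_\ast<X$ when $X>1$. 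Granting these, $Q(X,\cdot)$ is strictly decreasing on $[X,\infty)\subseteq[Y_\ast,\infty)$, so $Q(X,Y)<Q(X,X)<0$ for every $Y>X$, which is exactly the claim.

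Both (i) and (ii) reduce to one-line verifications. For (i), a direct expansion collapses to the clean identity $Q(X,X)=X(1-5X)$, which is negative for $X\geq 1$ and is reassuringly consistent with the restriction $\dot X=4(1-5X)$ of \eqref{eq:coflow1} to the invariant line $X=Y$ recorded just before Remark~\ref{rem:CoflowSimple}. For (ii), one has $Y_\ast=(2X^2-3X-1)/\big(4(X-1)\big)$ and
\[
Y_\ast-X=\frac{-2X^2+X-1}{4(X-1)};
\]
the numerator is a quadratic with negative leading coefficient and discriminant $1-8<0$, hence strictly negative, while the denominator is positive for $X>1$, so $Y_\ast<X$. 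I do not expect a genuine obstacle here: the argument is entirely elementary, and the only step deserving a little care is (ii) — it is precisely the fact that the vertex lies to the left of the line $Y=X$ that makes the monotonicity of $Q(X,\cdot)$ on $[X,\infty)$ available and so turns the sign estimate $Q(X,X)<0$ into the global bound $Q(X,Y)<0$ for all $Y>X$.
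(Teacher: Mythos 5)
Your argument is correct: with $\epsilon=1$ the sign of $\dot Y$ is that of the quadratic $Q(X,Y)=2(1-X)Y^2+(2X^2-3X-1)Y+2X(1-2X)$, and your checks — $Q(1,Y)=-2Y-2<0$, $Q(X,X)=X(1-5X)<0$ for $X\geq1$, and the vertex estimate $Y_\ast-X=\frac{-2X^2+X-1}{4(X-1)}<0$ for $X>1$ (negative discriminant) — do establish $Q(X,Y)<0$ on all of $\{Y>X,\ X\geq1\}$, whence $\dot Y<0$ there and $Y$ is bounded in that region. The route differs from the paper's, which avoids any parabola/vertex analysis by a single algebraic regrouping of the same bracket,
\[
\dot{Y}=\frac{4Y}{X^2}\bigl(2(1-X)(Y-X)Y-XY-Y+2X(1-2X)\bigr),
\]
in which every term is manifestly nonpositive (indeed the last three strictly negative) once $Y>X$ and $X\geq1$, so negativity is read off in one line. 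Your approach costs a little more computation but is systematic — it would detect the sign change wherever it occurs rather than relying on finding the right regrouping — and your identity $Q(X,X)=X(1-5X)$ gives a nice consistency check against the restriction $\dot X=4(1-5X)$ on the invariant line $X=Y$ noted before Remark~\ref{rem:CoflowSimple}. One small remark: the appeal to Lemma~\ref{lem:coflow.critpts} for preservation of $Y>X$ is not needed for the statement as used later (one only needs that $Y$ cannot diverge while the trajectory sits in this region), though it is harmless and correct.
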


\begin{proof}
In this setting, we can rewrite \eqref{eq:coflow2} in the following manner:
\begin{equation*}
\dot{Y}=\frac{4Y}{X^2}\left(2(1-X)(Y-X)Y-XY-Y+2X(1-2X)\right).    
\end{equation*}
Hence, $\dot{Y}<0$ when $Y>X$ and $X\geq 1$.  Therefore, $Y$ will be bounded in this region.  
\end{proof}

Since the only part of the quadrant with $X,Y>0$ where $Y$ can become unbounded is where $Y>X$ by Lemma \ref{lem:1.Y.bound}, we deduce that $Y$ can only become unbounded, when $\epsilon=1$, if $X$ remains in $(0,1)$.  We now show that this is impossible.

\begin{proposition}\label{prop:1.Y.unbounded} For $\epsilon=1$, there are no solutions $(X(s),Y(s))$ above the upper part of $\gamma_Y$ with $X(s)\in(0,1)$  and $Y(s)$ unbounded.
\end{proposition}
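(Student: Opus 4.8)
The plan is to argue by contradiction, using the observation that once $Y$ is large the velocity $\dot X$ grows quadratically in $Y$, which forces $X$ to leave the strip $\{0<X<1\}$.

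Suppose $(X(s),Y(s))$ is a solution of \eqref{eq:coflow1}--\eqref{eq:coflow2} with $\epsilon=1$ which lies above the upper part of $\gamma_Y$, has $X(s)\in(0,1)$ throughout, and has $Y(s)$ unbounded. First I would record the elementary qualitative facts. Above the upper part of $\gamma_Y$ the quadratic $2(1-X)Y^2+(2X^2-3X-1)Y+2X(1-2X)$ is positive (its leading coefficient $2(1-X)$ is positive since $X<1$), so $\dot Y>0$ there; hence $Y$ is strictly increasing, and being unbounded it satisfies $Y(s)\to+\infty$. By Lemma \ref{lem:X.bound}, $X$ is bounded away from zero, say $X(s)\ge\delta>0$, so $X(s)\in[\delta,1)$ for all $s$ while $Y(s)\to\infty$.

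Next I would extract the dominant behaviour of the two velocities. In the bracket of \eqref{eq:coflow1} the term $(X+1)Y^2$ has coefficient $\ge 1$, the remaining terms are of the form $O(Y)$ with universal constants on $[0,1]$, and $\tfrac{4}{X^2}\ge 4$ because $X<1$; this yields a lower bound of the shape $\dot X\ge 2Y^2$ once $Y$ is large enough. In \eqref{eq:coflow2} one has $(2X^2-3X-1)Y\le 0$ on $(0,1)$ and $2(1-X)<2$, so the bracket is $\le 3Y^2$ for $Y\ge 2$, while $\tfrac{4Y}{X^2}\le\tfrac{4}{\delta^2}Y$; hence $\dot Y\le CY^3$ for a constant $C=C(\delta)$ once $Y$ is large. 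The final step couples these: since $\dot Y>0$ we may regard $X$ as a function of $Y$ along the trajectory, and for $Y$ large
\[
\frac{\d X}{\d Y}=\frac{\dot X}{\dot Y}\ge\frac{2Y^2}{CY^3}=\frac{2}{CY}.
\]
Integrating from some large $Y_0$ to $Y$ gives $X(Y)\ge X(Y_0)+\tfrac{2}{C}\ln(Y/Y_0)$, which tends to $+\infty$ as $Y\to\infty$, contradicting $X(s)<1$. Therefore no such solution exists.

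The only real obstacle is careful bookkeeping of the inequalities: one must verify that the coefficient of $Y^2$ in the $\dot X$-bracket stays bounded below away from $0$ (this is exactly where $X<1$, and hence $\tfrac{X+1}{X^2}\ge\tfrac{1}{X^2}>1$, enters) so that the positive quadratic term genuinely dominates the lower-order terms, and that $\tfrac{4Y}{X^2}$ appearing in $\dot Y$ is controlled from above (which is where the lower bound $X\ge\delta$ from Lemma \ref{lem:X.bound} is essential). Reparametrising by $Y$ rather than $s$ has the pleasant side effect of making the argument insensitive to whether the solution exists for all $s$ or blows up in finite $s$, since in either case $Y$ ranges over an interval of the form $[Y_0,\infty)$.
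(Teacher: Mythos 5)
Your proposal is correct and takes essentially the same route as the paper: both arguments establish that once $Y$ is large one has $\dot X\gtrsim Y^2$ and $\dot Y\lesssim Y^3$, and then integrate the resulting slope inequality along the trajectory to reach a contradiction. The only cosmetic difference is that you invoke the lower bound $X\ge\delta$ from Lemma \ref{lem:X.bound} and integrate $\frac{\mathrm{d}X}{\mathrm{d}Y}\ge\frac{2}{CY}$ to contradict $X<1$, whereas the paper keeps the $X$-dependence, obtains $\frac{\mathrm{d}Y}{\mathrm{d}X}\le\frac{2(1-X)}{X}Y$, and uses Gr\"onwall to contradict the unboundedness of $Y$.
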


\begin{proof}
Suppose not and that we have such a solution.  Note that there is a finite $\bar{Y}>0$ such that for all $X\in(0,1)$,
\begin{equation} \label{eq:coflowterms}
Y^2+ 2 (2X^2-2X-1)XY -2X^2(2X-1)(X+1) > 0\quad\text{and} \quad (2X^2-3X-1)Y+2X(1-2X)<0 
\end{equation}
for all $Y>\bar{Y}$.  Since $Y$ is increasing and unbounded in the region under consideration, we may assume that $Y(s)>\bar{Y}$.  Then, comparing \eqref{eq:coflowterms} and \eqref{eq:coflow1}--\eqref{eq:coflow2}, we see that
\begin{equation*}
\dot{X} \geq \frac{4}{X}Y^2\quad\text{and}\quad \dot{Y}\leq \frac{8(1-X)}{X^2}Y^3.
\end{equation*}
Hence, we see that
\[
\frac{\d Y}{\d X}\leq \frac{2(1-X)}{X}Y.
\]
Gr\"onwall's inequality then shows that there are constants $C_0,C_1$ depending only on the initial conditions so that $Y$ is bounded by $C_0X^2e^{-2X}+C_1$.  Since we assumed $X(s)\in(0,1)$, this is a contradiction.
\end{proof}

Our results so far show that, when $\epsilon=1$, both $X$ and $Y$ are bounded and that $X$ is bounded away from zero. 
To complete the proof of Theorem \ref{thm:coflow.2} in the $\epsilon=1$ case we therefore only need the following.

\begin{lemma}\label{lem:1.Y.bound.0}
When $\epsilon=1$, $Y(s)$ is bounded away from zero.
\end{lemma}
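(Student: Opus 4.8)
The plan is to combine the gradient–flow structure recalled in \S\ref{sss:strategy} with a local analysis of the ``collapse'' critical point $(1/2,0)$. By the estimates already established for $\epsilon=1$, along the rescaled flow we have $0<X_{\min}\le X(s)\le X_{\max}$ and $0<Y(s)\le Y_{\max}$ for constants depending only on the initial data, so the right-hand side of \eqref{eq:coflow1}--\eqref{eq:coflow2} is bounded and the solution exists for all $s\ge 0$; moreover, writing $\dot Y=\frac{4Y}{X^2}G$ with $G$ bounded, one gets $|\dot Y|\le CY$ and hence $Y(s)\ge Y(0)e^{-Cs}>0$, so the orbit remains in the open quadrant. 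Since the flow is gradient-like (\S\ref{sss:strategy}) and its critical points are isolated, the $\omega$-limit set of the orbit is a single critical point, and by Lemma \ref{lem:coflow.critpts} and Remark \ref{rmk:coflow.collpase} the only candidates are $(1/5,1/5)$, $(1/2,0)$ and the degenerate point $(0,0)$. The bound $X\ge X_{\min}>0$ rules out $(0,0)$, and if the orbit converges to $(1/5,1/5)$ then $Y(s)\to 1/5>0$, so (being continuous on $[0,\infty)$ with a positive limit) $Y$ is bounded away from zero; thus everything reduces to ruling out convergence to $(1/2,0)$.

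To do this I would analyse the critical point $(1/2,0)$ directly. Its Jacobian has eigenvalues $-24$ and $0$, with eigenvector $(1,0)$ for $-24$ and $(1,-1)$ for $0$. The line $\{Y=0\}$ (where $b=0$) is invariant and tangent to the $(1,0)$ eigendirection, and since the flow restricted to it runs into $(1/2,0)$, it is the one-dimensional stable manifold; the remaining local dynamics are carried by the one-dimensional centre manifold $W^c$, a curve tangent to the line $X+Y=\tfrac12$. I would then compute $W^c$ to second order: writing $X=\tfrac12+h(Y)$ with $h(Y)=-Y+aY^2+O(Y^3)$ and letting $F,G$ denote the bracketed expressions in \eqref{eq:coflow1} and \eqref{eq:coflow2} (with $\epsilon=1$), the invariance relation $h'(Y)=F(\tfrac12+h(Y),Y)\big/\big(Y\,G(\tfrac12+h(Y),Y)\big)$ forces the $Y^2$-coefficient of $F$ on $W^c$ to vanish, which gives $a=-\tfrac53$, so $W^c=\{X=\tfrac12-Y-\tfrac53Y^2+O(Y^3)\}$. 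Substituting this back shows $G|_{W^c}=\tfrac43Y^2+O(Y^3)$, so the reduced flow on $W^c$ is $\dot Y=\tfrac{64}{3}Y^3+O(Y^4)>0$ for small $Y>0$: the centre manifold is repelling in the half-space $Y>0$.

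By the centre-manifold reduction (Shoshitaishvili's theorem), the local flow near $(1/2,0)$ is topologically a product of the flow on $W^c$ with a stable node, so $(1/2,0)$ attracts no orbit with $Y>0$. Combining this with the second paragraph, the orbit converges to $(1/5,1/5)$, and in particular $Y(s)\to 1/5$, which is the claim.

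The main obstacle is the centre-manifold step: because $(1/2,0)$ is non-hyperbolic the linearisation alone is inconclusive, and the sign of the reduced flow on $W^c$ only emerges after carrying the expansion of $W^c$ to second order — indeed it is precisely the quadratic correction $-\tfrac53Y^2$ that flips $G|_{W^c}$ from negative (its value $-2Y^2+O(Y^3)$ on the tangent line $X+Y=\tfrac12$) to positive. The remaining points (convergence to a single critical point, and exclusion of $(0,0)$) are routine once the bounds established earlier are in hand.
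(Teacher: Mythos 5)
Your proposal is correct, but it takes a genuinely different route from the paper. The paper's proof is purely local: using Lemma \ref{lem:1.Y.bound} it restricts attention to points near $(1/2,0)$ above $\gamma_Y$, linearizes there (obtaining exactly your Jacobian, with eigenvalues $-24$ and $0$), and argues that to leading order $Y_1$ stays constant while $X_1+Y_1$ decays, so $Y$ cannot reach $0$; it never needs to know where the orbit ultimately converges. You instead prove convergence first: the bounds from Lemma \ref{lem:X.bound}, Lemma \ref{lem:1.Y.bound}, Lemma \ref{lem:1.Y.bound.2} and Proposition \ref{prop:1.Y.unbounded} (none of which presuppose this lemma, so there is no circularity) give global existence and a compact forward orbit with $Y>0$ for all finite $s$; the gradient-like structure from \S\ref{sss:strategy} forces the $\omega$-limit set to be a single critical point; $(0,0)$ is excluded by $X\geq X_{\min}$; and $(1/2,0)$ is excluded by a centre-manifold computation — your expansion $X=\tfrac12-Y-\tfrac53Y^2+O(Y^3)$ and reduced dynamics $\dot Y=\tfrac{64}{3}Y^3+O(Y^4)$ check out against \eqref{eq:coflow1}--\eqref{eq:coflow2}, and the Y=0 axis is indeed the local stable manifold, so no orbit with $Y>0$ converges to $(1/2,0)$. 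What your approach buys is rigour precisely where the point is non-hyperbolic: the paper's ``to leading order'' step is delicate because of the zero eigenvalue, and your calculation shows the decisive repelling sign only appears at quadratic order in the centre manifold (on the tangent line the relevant quantity is still negative). What the paper's approach buys is brevity and locality: it avoids centre-manifold machinery and does not presuppose the single-limit-point consequence of gradient-likeness, so the lemma can then feed into the convergence argument rather than be derived from it. One shared caveat: the claim that the rescaled, reparametrized $(X,Y)$ system is gradient-like is inherited from \S\ref{sss:strategy} in both arguments, and since the Hitchin functional is not scale-invariant this descent is not spelled out in the paper either; your use of it is no less justified than the paper's, but it is the one step in your proof that rests on that same unproved assertion.
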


\begin{proof}
By Remark \ref{rmk:coflow.collpase} and Lemma \ref{lem:1.Y.bound},  we need only consider points near $(X,Y)=(1/2,0)$ above $\gamma_Y$.  Linearizing around the critical point $(1/2,0)$, so writing $X=\frac{1}{2}+\delta X_1$ and $Y=\delta Y_1$ for $\delta$ small, we find that the linear term in $\delta$ gives
\[
\dot{X_1}+\dot{Y_1} = -24(X_1+Y_1) \quad\text{and}\quad \dot{Y_1}=0.
\]
These equations are degenerate along the line $X_1+Y_1=0$, which is  tangent to $\gamma_Y$ at $(1/2,0)$. 
However, $\gamma_Y$ lies above this line for all points near $(1/2,0)$, so we may restrict to the region where $X_1+Y_1>0$.  Therefore, to leading order, $Y_1$ remains a non-zero constant and $X_1+Y_1$ decreases with an exponential rate.  Hence, $Y$ will be bounded away from $0$, as claimed.
\end{proof}

We can now put our results together so far.

\begin{proof}[Proof of Theorem \ref{thm:coflow.2} for $\epsilon=1$]
Lemma \ref{lem:X.bound} shows that $X$ is bounded away from $0$ and bounded above if $Y$ is bounded.  Lemma \ref{lem:1.Y.bound}, Lemma \ref{lem:1.Y.bound.2} and Proposition \ref{prop:1.Y.unbounded} show that $Y$ is bounded and hence $X$ is bounded.  Finally, Lemma \ref{lem:1.Y.bound.0} shows that $Y$ is bounded away from $0$.  
The observations in \S\ref{sss:strategy} then give the   result.
\end{proof}

\subsubsection{Bounds on $Y$: $\epsilon=-1$} Having proved Theorem \ref{thm:coflow.2} for $\epsilon=1$ now move on to the case where $\epsilon=-1$.  The arguments here are similar to the $\epsilon=1$ case, but often easier.

\begin{lemma}\label{lem:-1.Y.bound}
When $\epsilon=-1$, $Y(s)$ can only diverge in the region to the left of $\gamma_Y$ and above the line $Y=X$, and can only tend to $0$ to the right of $\gamma_Y$ but below the line $Y=X$.
\end{lemma}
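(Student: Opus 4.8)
The plan is to mimic the sign-analysis used for the $\epsilon=1$ case (Lemmas \ref{lem:1.Y.bound} and \ref{lem:1.Y.bound.2}), reading off the behaviour of $\dot Y$ directly from \eqref{eq:coflow2} with $\epsilon=-1$, i.e.\ from
\begin{equation*}
\dot{Y}=\frac{4Y}{X^2}\left(2(1-X)Y^2-(2X^2-3X-1)Y+2X(1-2X)\right).
\end{equation*}
Since $Y>0$ and $X>0$, the sign of $\dot Y$ is governed by the quadratic-in-$Y$ factor, whose zero set is precisely the curve $\gamma_Y$ appearing in Figure \ref{fig:coflow.plots} (right). First I would identify on which side of $\gamma_Y$ this factor is positive and on which it is negative: for $X$ small ($2X^2-3X-1<0$ and $1-2X>0$) the factor is positive for all $Y>0$, which pins down the orientation, and then $\gamma_Y$ separates the first quadrant into the region where $\dot Y>0$ (to the right of $\gamma_Y$, roughly, for the relevant branch) and where $\dot Y<0$ (to the left). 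Consequently $Y$ is monotone non-increasing in the region to the left of $\gamma_Y$, so it stays bounded there unless it has already escaped; combined with Lemma \ref{lem:coflow.critpts}, which says that when $\epsilon=-1$ the line $Y=X$ is invariant only through the critical point $(1,1)$, one concludes that $Y$ can only diverge in the part of the left region lying above $Y=X$ (below $Y=X$ the line acts as a barrier keeping $Y\le X$, hence bounded, as $X$ is controlled by Lemma \ref{lem:X.bound}). The statement about $Y\to 0$ is the dual assertion: $\dot Y>0$ to the right of $\gamma_Y$, so $Y$ is bounded away from $0$ there, forcing any decay to $0$ to happen to the right of $\gamma_Y$ and below $Y=X$.

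More concretely, the steps are: (i) write $\dot Y$ in the factored form above and record that the quadratic factor $q(X,Y):=2(1-X)Y^2-(2X^2-3X-1)Y+2X(1-2X)$ vanishes exactly along $\gamma_Y$; (ii) check the sign of $q$ at a convenient reference point (e.g.\ $X$ near $0$, where $q>0$ for all $Y>0$) to fix which component of the complement of $\gamma_Y$ has $\dot Y>0$; (iii) deduce that in the region to the left of $\gamma_Y$ we have $\dot Y\le 0$, so $Y$ is bounded by its value upon entering that region, whence divergence of $Y$ can only occur in a region where it is increasing, i.e.\ to the right of $\gamma_Y$ — but there, as in the $\epsilon=1$ argument, one uses the invariance/barrier property of $Y=X$ from Lemma \ref{lem:coflow.critpts} together with Lemma \ref{lem:X.bound} to rule out divergence unless additionally $Y>X$, and intersecting with step (iii) shows the only possible divergence locus is to the left of $\gamma_Y$ \emph{and} above $Y=X$; (iv) run the mirror-image argument for $Y\to 0$, using $\dot Y\ge 0$ to the right of $\gamma_Y$ (so $Y$ cannot decay there) and the $Y=X$ barrier (so below $Y=X$ it cannot cross down through a region forcing it up), to conclude decay to $0$ can only occur to the right of $\gamma_Y$ and below $Y=X$.

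The main subtlety I expect is not the monotonicity bookkeeping but getting the \emph{geometry of $\gamma_Y$ relative to the line $Y=X$} exactly right: one must verify that $\gamma_Y$ passes through the critical point $(1,1)$, that it meets $X=0$ at $Y=0$ and $Y=1/2$ (as noted for the $\epsilon=1$ case, and similarly here), and that its two branches sit on the claimed sides of $Y=X$, so that the phrases ``to the left of $\gamma_Y$ and above $Y=X$'' and ``to the right of $\gamma_Y$ but below $Y=X$'' actually describe the regions intended in Figure \ref{fig:coflow.plots}. This is a finite check — substitute $Y=X$ into $q(X,Y)$ and factor, and analyse $q(0,Y)$ and the large-$X$ asymptotics of the branches — but it is the place where an error would silently propagate. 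Once the picture is pinned down, the argument is a direct transcription of the $\epsilon=1$ reasoning, and indeed the authors remark that this case is ``similar\dots but often easier,'' precisely because with $\epsilon=-1$ the relevant sign regions are more cleanly separated by the invariant line through $(1,1)$.
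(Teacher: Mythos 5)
There is a genuine gap here: your sign analysis is stated backwards, and the auxiliary ``barrier'' you lean on does not exist for $\epsilon=-1$. Your reference-point check is correct — for $\epsilon=-1$ the factor is $q(X,Y)=2(1-X)Y^2+(1+3X-2X^2)Y+2X(1-2X)$, and for small $X$ all three coefficients are positive, so $\dot Y>0$ there — but small $X$ lies to the \emph{left} of $\gamma_Y$ (which emanates from $(1/2,0)$, passes through $(1,1)$, satisfies $X\le\bar X<2$ and asymptotes to $X=1$ as $Y\to\infty$, cf.\ Lemma \ref{lem:-1.Y.bound.2}). Hence $\dot Y>0$ to the left of $\gamma_Y$ and $\dot Y<0$ to the right, which is precisely the paper's one-line proof: $Y$ can only diverge where it is increasing (left of $\gamma_Y$) and can only tend to $0$ where it is decreasing (right of $\gamma_Y$). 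You instead assign $\dot Y>0$ to the right and $\dot Y<0$ to the left, and the argument becomes self-contradictory: your step (iii) concludes that divergence can only happen to the right of $\gamma_Y$ and then declares the divergence locus to be on the left, while step (iv) asserts $\dot Y\ge0$ to the right ``so $Y$ cannot decay there'' even though the conclusion you want is that decay happens exactly there.

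The second error is the appeal to ``the invariance/barrier property of $Y=X$ from Lemma \ref{lem:coflow.critpts}'': that lemma says $X=Y$ is preserved for $\epsilon=1$ but explicitly \emph{not} for $\epsilon=-1$ (except at $(1,1)$), so no such barrier is available in the case at hand, and the paper does not use one; nor can Lemma \ref{lem:X.bound} be quoted for an upper bound on $X$ — in the paper that bound is only obtained after $Y$ is controlled, so your use of it is circular. The ``above/below $Y=X$'' qualifiers come instead from the position of $\gamma_Y$: since $X\le\bar X<2$ on $\gamma_Y$, at heights $Y>\bar X$ the whole region to the left of $\gamma_Y$ lies above the line $Y=X$, so any divergence occurs there; and since $\gamma_Y$ meets the $X$-axis at $X=1/2$, points to the right of $\gamma_Y$ with $Y$ small have $X>1/2>Y$, i.e., lie below $Y=X$. (Note also that for $\epsilon=-1$ the curve $\gamma_Y$ does not hit $X=0$ at $Y=1/2$: $q(0,Y)=2Y^2+Y$ has no positive root — that feature belongs to the $\epsilon=1$ case, so the geometry you propose to verify is partly the wrong one.) With the signs corrected and these elementary checks on $\gamma_Y$ carried out, the lemma follows as in the paper.
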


\begin{proof}
These are elementary observations given that $Y$ is decreasing to the right of $\gamma_Y$ and increasing to the left of $\gamma_Y$.
\end{proof}

We now see that the evolution equation \eqref{eq:coflow2} for $Y$ has a useful feature when $\epsilon=-1$.

\begin{lemma}\label{lem:-1.Y.bound.2} When $\epsilon=-1$,
there exists a least $\bar{X}\in(1,2)$ such that $X\leq \bar{X}$ whenever $(X,Y)$ is on $\gamma_Y$.  Hence, $\dot{Y}\leq 0$ and thus $Y$ is bounded whenever $X\geq \bar{X}$.
\end{lemma}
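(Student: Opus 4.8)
The plan is to reduce everything to the sign of the scalar polynomial appearing in \eqref{eq:coflow2}. Setting $\epsilon=-1$ there, write
\[
q(X,Y)=2(1-X)Y^2-(2X^2-3X-1)Y+2X(1-2X),
\]
so that $\dot Y=\tfrac{4Y}{X^2}\,q(X,Y)$ and $\gamma_Y=\{(X,Y):X,Y>0,\ q(X,Y)=0\}$. I will take $\bar X$ to be the supremum of the $X$-coordinates of points of $\gamma_Y$; the assertions of the lemma are then (i) that this supremum is finite and lies in $(1,2)$ — which makes it the least $\bar X$ with the stated property — and (ii) that $\dot Y\le 0$ on the region $\{X\ge\bar X\}$.

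For the upper bound I would argue directly with the sign of $q$ on the positive quadrant. For $X\ge 7/4$ one has $2(1-X)\le-\tfrac32$; also $2X^2-3X-1\ge-\tfrac18$, because $2X^2-3X-\tfrac78$ vanishes at $X=7/4$ and is increasing there; and $2X(1-2X)\le-\tfrac{35}4$, since $2X(1-2X)$ is decreasing for $X\ge\tfrac14$ and equals $-\tfrac{35}4$ at $X=7/4$. Hence for all $X\ge 7/4$ and all $Y>0$,
\[
q(X,Y)\le-\tfrac32 Y^2+\tfrac18 Y-\tfrac{35}4<0 .
\]
Thus $\gamma_Y$ has no point with $X\ge 7/4$, so the supremum $\bar X$ is finite and $\bar X\le 7/4<2$. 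For the lower bound, $q(1,1)=0$, so $(1,1)\in\gamma_Y$ (this is the critical point \eqref{eq:coflow.crit.2}), while $\partial_Y q(1,1)=2\ne 0$; the implicit function theorem then displays $\gamma_Y$ near $(1,1)$ as a graph $Y=Y(X)$ with $Y(1)=1>0$, so $\gamma_Y$ contains points with $X$ slightly above $1$, and therefore $\bar X>1$. Hence $\bar X\in(1,2)$.

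It remains to deduce $\dot Y\le 0$ for $X\ge\bar X$. If $X>\bar X$ then $q(X,\cdot)$ has no zero in $(0,\infty)$ by the definition of $\bar X$, hence has constant sign there; since $X>1$ the leading coefficient $2(1-X)$ is negative, so $q(X,Y)\to-\infty$ as $Y\to\infty$, forcing $q(X,Y)<0$ for all $Y>0$ and hence $\dot Y<0$. Letting $X\downarrow\bar X$ gives $q(\bar X,Y)\le 0$, so $\dot Y\le 0$ at $X=\bar X$ as well. Finally, along any solution, on an interval where $X(s)\ge\bar X$ we have $\dot Y\le 0$, so $Y$ is non-increasing there and thus stays bounded by its value on entering $\{X\ge\bar X\}$; this is the last claim of the lemma.

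All of the computations are elementary; the one subtlety is that $\bar X$ must be defined as the supremum of $X$-values on $\gamma_Y$ rather than through the discriminant $D(X)=(2X^2-3X-1)^2-16X(1-X)(1-2X)$ of $q(X,\cdot)$ in $Y$, since $D$ is positive again for large $X$ (the corresponding roots of $q$ having simply moved into $Y<0$). So the main thing to get right is to control the sign of $q$ on $\{X>0,\ Y>0\}$ directly, exactly as above, rather than via its discriminant.
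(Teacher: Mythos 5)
Your proposal is correct and is essentially the ``elementary calculation'' the paper alludes to: you analyse the sign of the quadratic $q(X,Y)$ (with $\epsilon=-1$) whose zero set is $\gamma_Y$, showing $q<0$ for all $Y>0$ once $X\geq 7/4$ and using the critical point $(1,1)\in\gamma_Y$ to get $\bar X>1$, which yields $\bar X\in(1,2)$ and $\dot Y\leq 0$ for $X\geq\bar X$. The explicit bounds check out, and your closing caution about not arguing via the discriminant alone (since its positive region for large $X$ corresponds to negative roots) is a correct and worthwhile precaution.
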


\begin{proof}
This is an elementary calculation, showing that $X$ takes a maximum value on the curve $\gamma_Y$.
\end{proof}
 
We deduce from Lemma \ref{lem:-1.Y.bound} and Lemma \ref{lem:-1.Y.bound.2} that the only way $Y$ can become unbounded when $\epsilon=-1$ is if $X$ remains in the interval $(0,\bar{X})$.  We now show that this is not possible.

\begin{proposition}\label{prop:-1.Y.bound}  Recall $\bar{X}$ from Lemma \ref{lem:-1.Y.bound.2}. 
For $\epsilon=-1$, there are no solutions $(X(s),Y(s))$ to the left of $\gamma_Y$ and above $Y=X$ with $X(s)\in (0,\bar{X})$ and $Y(s)$ unbounded.
\end{proposition}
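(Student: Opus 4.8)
The plan is to mimic closely the argument for Proposition~\ref{prop:1.Y.unbounded}, which covered the analogous situation for $\epsilon=1$. Suppose for contradiction that there is a solution $(X(s),Y(s))$ lying to the left of $\gamma_Y$ and above $Y=X$, with $X(s)$ confined to $(0,\bar X)$ and $Y(s)\to\infty$. In this region $\dot Y>0$ (we are to the left of $\gamma_Y$), so $Y$ is monotone increasing and we may pass to the regime $Y>\bar Y$ for a suitably large threshold $\bar Y$ depending only on $\bar X$. For $X\in(0,\bar X)$ and $Y$ large, the $Y^2$ terms dominate in both \eqref{eq:coflow1} and \eqref{eq:coflow2}: the leading behaviour is $\dot X\sim \tfrac{4(X+1)}{X^2}Y^2$ and $\dot Y\sim \tfrac{8(1-X)}{X^2}Y^3$. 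In particular, for $Y$ large enough there are constants so that
\begin{equation*}
\dot X\ \geq\ \frac{2(X+1)}{X^2}Y^2\qquad\text{and}\qquad \dot Y\ \leq\ \frac{16(1-X)}{X^2}Y^3 ,
\end{equation*}
valid wherever $X\in(0,1)$ — note that the upper bound on $\dot Y$ is only useful, and only needed, where $1-X>0$, while for $X\in[1,\bar X)$ we already have $\dot Y\le 0$ by Lemma~\ref{lem:-1.Y.bound.2} (since $X\ge 1>$ nothing on $\gamma_Y$ forces increase there — more precisely one checks $\dot Y\le 0$ for $X\ge 1$ in the relevant strip, exactly as in the $\epsilon=1$ case where Lemma~\ref{lem:1.Y.bound.2} handled $X\ge 1$). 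So the only way to accumulate unbounded $Y$ is while $X\in(0,1)$.

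Dividing the two differential inequalities gives
\begin{equation*}
\frac{\d Y}{\d X}\ =\ \frac{\dot Y}{\dot X}\ \leq\ \frac{8(1-X)}{X+1}\,Y\ \leq\ 8(1-X)\,Y
\end{equation*}
on the part of the trajectory where $X\in(0,1)$ and $Y$ is large. Gr\"onwall's inequality applied in the variable $X$ (which is increasing there, since $\dot X>0$) yields $Y(X)\le C_0\exp\!\big(8X-4X^2\big)\le C_1$ for $X\in(0,1)$, where $C_0,C_1$ depend only on the initial data at the moment $Y$ crossed $\bar Y$. This contradicts $Y\to\infty$. Combining with the case $X\in[1,\bar X)$ — where $Y$ is non-increasing — shows $Y$ cannot diverge while $X$ stays in $(0,\bar X)$, which is the only escape region left open by Lemma~\ref{lem:-1.Y.bound} and Lemma~\ref{lem:-1.Y.bound.2}. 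Hence no such solution exists.

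The main obstacle, as in the $\epsilon=1$ case, is making the ``leading-order domination'' step rigorous: one must pin down an explicit finite $\bar Y=\bar Y(\bar X)$ below which the lower-order polynomial terms in $X$ (namely $2(2X^2-2X-1)XY$ and $-2X^2(2X-1)(X+1)$ in $\dot X$, and the $\epsilon(2X^2-3X-1)Y+2X(1-2X)$ piece in $\dot Y$) are absorbed into a fixed fraction of the cubic/quadratic leading terms, uniformly over the \emph{compact} interval $X\in[0,\bar X]$. Since all these are polynomials and $\bar X<2$ is fixed, such a $\bar Y$ exists by elementary estimates; the only mild care needed is that the coefficient $1-X$ of the $Y^3$ term in $\dot Y$ changes sign at $X=1$, which is precisely why the argument is split at $X=1$ and the Gr\"onwall step is run only on $(0,1)$. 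Everything else is a routine integration.
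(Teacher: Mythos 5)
There is a genuine gap, and it sits exactly where the difficulty of the $\epsilon=-1$ case lies, namely at $X$ near $1$. Your key differential inequality $\dot Y\le \tfrac{16(1-X)}{X^2}Y^3$, claimed to hold for all $X\in(0,1)$ once $Y>\bar Y(\bar X)$, is false with any threshold depending only on $\bar X$: for $\epsilon=-1$ the exact bracket in \eqref{eq:coflow2} is $2(1-X)Y^2+(1+3X-2X^2)Y+2X(1-2X)$, and as $X\to 1^-$ the leading coefficient $2(1-X)$ vanishes while the linear coefficient tends to $2>0$. At $X=1$ one has $\dot Y=8Y(Y-1)>0$ for all $Y>1$, whereas your bound would force $\dot Y\le 0$; absorbing the linear term into a fixed fraction of $2(1-X)Y^2$ requires $Y\gtrsim (1-X)^{-1}$, so no uniform $\bar Y$ exists. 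This cannot be dismissed as a technicality: since $\gamma_Y$ asymptotes to $X=1$ as $Y\to\infty$, a putative solution with $Y$ unbounded must spend its late time precisely in the regime $X$ near $1$, i.e.\ exactly where your estimate fails. The fallback sentence ``for $X\in[1,\bar X)$ we already have $\dot Y\le 0$ by Lemma \ref{lem:-1.Y.bound.2}'' misquotes that lemma, which gives $\dot Y\le 0$ only for $X\ge\bar X$; the curve $\gamma_Y$ genuinely enters the strip $1<X<\bar X$ (that is why $\bar X\in(1,2)$ is strictly bigger than $1$), so the computation of Lemma \ref{lem:1.Y.bound.2} does not transfer from $\epsilon=1$ to $\epsilon=-1$.

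The paper's proof repairs both points. First, it rules out the solution ever entering $\{X\ge 1\}$: there $X$ would be strictly increasing, yet to remain to the left of $\gamma_Y$ (which asymptotes to $X=1$ as $Y\to\infty$) $X$ would eventually have to decrease, a contradiction; hence $X(s)\in(0,1)$ throughout. Second, on $(0,1)$ it does not insist on keeping the sharp factor $(1-X)$, but uses the cruder bound $\dot Y\le \tfrac{8(2-X)}{X^2}Y^3$, whose coefficient is bounded away from zero on $(0,1)$, so the lower-order terms are absorbed with a threshold $\bar Y$ independent of $X$ (the extra $Y^2$ coming from $2(2-X)-2(1-X)=2$ dominates $(1+3X-2X^2)Y+2X(1-2X)$ once $Y\ge 2$). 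Gr\"onwall then gives $\tfrac{\d Y}{\d X}\le\tfrac{2(2-X)}{X}Y$, hence $Y\lesssim X^4e^{-2X}$, still bounded on $(0,1)$, which is the desired contradiction. Your lower bound on $\dot X$ and the general Gr\"onwall strategy are fine; to salvage your write-up, replace the $(1-X)$-bound by one with coefficient bounded below (as in the paper) and replace the $X\ge 1$ step by the asymptote argument above.
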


\begin{proof}
We suppose, for a contradiction, that there is such a solution.  Suppose that the solution enters the part of the region where $X\geq 1$.  Then $X$ is strictly increasing, so $X>1$ for all subsequent times.  However, the line $\gamma_Y$ asymptotes to $X=1$ as $Y\to\infty$, so we must eventually have that $X$ is decreasing, which is a contradiction.  

We deduce that   $X(s)\in(0,1)$ for all $s$.
We note that there is a finite $\bar{Y}>0$ such that
\begin{equation}
\begin{split}
\label{eq:coflow.-1.terms}
    Y^2-2(2X^2-2X-1)XY-2X^2(2X-1)(X+1)>0\\
    \text{and}\quad (2X^2-3X-1)Y-2X(1-2X)>0
\end{split}
\end{equation}
for all $X\in(0,1)$ and $Y>\bar{Y}$.
Since $Y$ is increasing in the region we are studying and we are assuming it is unbounded, we may restrict to the case where $Y(s)>\bar{Y}$.  Comparing \eqref{eq:coflow.-1.terms} to \eqref{eq:coflow1}--\eqref{eq:coflow2} yields the differential inequalities
\[
\dot{X}\geq \frac{4}{X}Y^2\quad\text{and}\quad \dot{Y}\leq \frac{8(2-X)Y^3}{X^2}.
\]
We deduce that
\[
\frac{\d Y}{\d X}\leq \frac{2(2-X)}{X}Y
\]
and so, by Gr\"onwall's inequality, we have that $Y$ is bounded by $X^4e^{-2X}$ (up to multiplicative factors and additive constants depending only on the initial conditions).  Since $X\in(0,1)$, this forces our required contradiction.
\end{proof}

To complete the proof in the $\epsilon=-1$ we now only need to show that $Y$ stays away from $0$.

\begin{lemma}\label{lem:-1.Y.bound.0}
When $\epsilon=-1$, $Y(s)$ is bounded away from zero.
\end{lemma}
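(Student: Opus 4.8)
The plan is to mirror the structure of Lemma \ref{lem:1.Y.bound.0}, the analogous statement for $\epsilon=1$, by combining the global picture provided by Lemma \ref{lem:-1.Y.bound} with a local linearization near the relevant collapse critical point. First I would invoke Lemma \ref{lem:-1.Y.bound}: when $\epsilon=-1$, the only region where $Y(s)$ can tend to zero is to the right of $\gamma_Y$ but below the line $Y=X$. In that region, as $Y\to 0$ the trajectory must approach the part of the boundary $\{Y=0\}$ that is stationary, which by Remark \ref{rmk:coflow.collpase} is the critical point $(X,Y)=(1/2,0)$ (the twistor-space collapse, valid for both signs of $\epsilon$); the degenerate critical point $(0,0)$ is excluded here since $X$ is bounded away from zero by Lemma \ref{lem:X.bound} together with the earlier bounds. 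So it suffices to show that no trajectory with $\epsilon=-1$ can limit onto $(1/2,0)$ from the admissible region.

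The key step is then the local analysis at $(1/2,0)$. Writing $X=\tfrac12+\delta X_1$ and $Y=\delta Y_1$ and extracting the linear-in-$\delta$ terms from \eqref{eq:coflow1}--\eqref{eq:coflow2} with $\epsilon=-1$, I expect to obtain a degenerate linear system, just as in the $\epsilon=1$ case, with $\dot{Y_1}=0$ to leading order and a single nontrivial combination (of the form $\dot{X_1}+\alpha \dot{Y_1}$ for an explicit constant $\alpha$) decaying exponentially, with the degenerate direction being tangent to $\gamma_Y$ at $(1/2,0)$. The crucial geometric input is then the position of $\gamma_Y$ relative to this tangent line near $(1/2,0)$: I would check that, for $\epsilon=-1$, the region below $Y=X$ and to the right of $\gamma_Y$ — the only place $Y$ could decay — lies on the side of the degenerate line on which $Y_1$ is pushed to a positive constant rather than to zero. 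Concluding, to leading order $Y_1$ remains a nonzero constant along any trajectory trapped near $(1/2,0)$ in the admissible region, so $Y$ cannot decay to zero there, and hence $Y(s)$ is bounded away from zero globally.

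The main obstacle I anticipate is the sign/geometry bookkeeping in the last step: one must be careful that the admissible region (right of $\gamma_Y$, below $Y=X$) is genuinely the ``good'' side of the degenerate eigendirection at $(1/2,0)$ for $\epsilon=-1$, since the coefficients differ from the $\epsilon=1$ case and the curve $\gamma_Y$ has a different shape (it asymptotes to $X=1$ rather than staying bounded, as used in Proposition \ref{prop:-1.Y.bound}). A secondary subtlety is ensuring that the linearization is not only formally degenerate but actually controls the dynamics — i.e.\ that higher-order terms do not allow $Y_1\to 0$ — which I would handle by the same reasoning as in Lemma \ref{lem:1.Y.bound.0}, namely restricting to the half-region where the nondegenerate combination is positive and noting its exponential decay forces trajectories toward, not away from, the line $Y_1=\text{const}>0$. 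With these checks the proof closes exactly as in the $\epsilon=1$ case, and then the proof of Theorem \ref{thm:coflow.2} for $\epsilon=-1$ follows by assembling Lemma \ref{lem:X.bound}, Lemma \ref{lem:-1.Y.bound}, Lemma \ref{lem:-1.Y.bound.2}, Proposition \ref{prop:-1.Y.bound} and this lemma, together with the observations in \S\ref{sss:strategy}.
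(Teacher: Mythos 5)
Your proposal follows essentially the same route as the paper: restrict via Remark \ref{rmk:coflow.collpase} and Lemma \ref{lem:-1.Y.bound} to a neighbourhood of $(1/2,0)$ to the right of $\gamma_Y$, linearize there, and use that the degenerate direction is tangent to $\gamma_Y$ while the transverse combination decays exponentially. In the paper the computation gives $\dot{Y_1}=0$ and $\dot{X_1}-\dot{Y_1}=-24(X_1-Y_1)$, with $\gamma_Y$ lying above the line $X_1=Y_1$, so one restricts to $X_1-Y_1>0$ and concludes that to leading order $Y_1$ remains a nonzero constant --- exactly the structure and sign check you anticipated.
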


\begin{proof}
The proof is entirely analogous to that of Lemma \ref{lem:1.Y.bound.0}.  
Remark \ref{rmk:coflow.collpase}  and Lemma \ref{lem:-1.Y.bound} show that we may restrict attention to points near $(1/2,0)$ to the right of $\gamma_Y$.  Writing $X=\frac{1}{2}+\delta X_1$ and $Y=\delta Y_1$ for $\delta$ small, we find that the linear term in $\delta$ gives
\[
\dot{X_1}-\dot{Y_1} = -24(X_1-Y_1) \quad\text{and}\quad \dot{Y_1}=0.
\]
Note that the line $X_1=Y_1$ is tangent to $\gamma_Y$ at $(1/2,0)$ and that $\gamma_Y$ lies above this line.  Therefore, we need only consider $X_1-Y_1>0$ and see that, to leading order, $Y_1$ remains a non-zero constant whilst $X_1-Y_1$ exponentially decreases.  Hence, $Y$ will  be bounded away from $0$.
\end{proof}

We may now complete the proof of Theorem \ref{thm:coflow.2}.

\begin{proof}[Proof of Theorem \ref{thm:coflow.2} for $\epsilon=-1$]
We first see that Lemma \ref{lem:X.bound} shows that $X$ is bounded away from $0$ and bounded if $Y$ is bounded.  Lemmas \ref{lem:-1.Y.bound} and \ref{lem:-1.Y.bound.2}, together with Proposition \ref{prop:-1.Y.bound}, show that $Y$ is bounded, and thus $X$ is also bounded.  Finally, Lemma \ref{lem:-1.Y.bound.0} shows that $Y$ is bounded away from zero, which completes the proof by the discussion in \S\ref{sss:strategy}.
\end{proof}

\section{Laplacian flow}

We now consider the Laplacian flow for our family of $\GG_2$-structures in \eqref{eq:varphi.ansatz}.  We recall that this flow, if it is well-posed and stays within the ansatz, is given by
\begin{equation}\label{eq:flow.ansatz}
    \frac{\partial}{\partial t}\varphi_{\epsilon}(t)=\Delta_{\varphi_{\epsilon}(t)}\varphi_{\epsilon}(t)=\mathrm{d}^*_{\varphi(t)}\mathrm{d}\varphi_{\epsilon}(t),
\end{equation}
for the coclosed 3-forms $\varphi_{\epsilon}(t)$ in \eqref{eq:varphi.ansatz}.  Here we have to be particularly mindful that the coclosed condition may not be preserved by the flow, let alone the ansatz. 

\subsection{The flow equations} We first observe that the Hodge Laplacian of the 3-form defining the $\mathrm{G}_2$-structure follows immediately from that of the 4-form by taking the Hodge star: 
\[
\mathrm{d}\mathrm{d}^*_{\varphi_{\epsilon}}\varphi_{\epsilon}=*\Delta_{\psi_{\epsilon}}\psi_{\epsilon}.
\]
To compute the Hodge star we use the following relations: 
\begin{align*}
*_{\varphi_{\epsilon}}\pi^*\mathrm{vol}_N&=\frac{\epsilon a^2 b}{c^4}\eta_1\wedge\eta_2\wedge\eta_3;\\
*_{\varphi_{\epsilon}}(\eta_2\wedge\eta_3\wedge\omega_1+\eta_3\wedge\eta_1\wedge\omega_2)&= \frac{\epsilon}{b}(\eta_1\wedge\omega_1+\eta_2\wedge\omega_2);\\
*_{\varphi_{\epsilon}}(\eta_1\wedge\eta_2\wedge\omega_1)&=\frac{\epsilon b}{a^2}\eta_3\wedge\omega_3.
\end{align*}
We can now use Lemma \ref{lem:Lap.psi} to find an expression for the Hodge Laplacian of $\varphi_{\epsilon}$, which we need to consider the Laplacian flow \eqref{eq:flow.ansatz}.

\begin{lemma}\label{lem:Lap.phi}
The Hodge Laplacian of $\varphi_{\epsilon}$ in \eqref{eq:varphi.ansatz} is given by:
\begin{equation}\label{eq:Lap.phi}
\begin{split}
\Delta_{\varphi_{\epsilon}}\varphi_{\epsilon} &= \frac{8\epsilon a^2b}{c^2}\left(\frac{2a^2}{c^2} +\frac{b^2}{c^2} +2 + \frac{2\epsilon b}{a} - \frac{b^2}{a^2}\right) \eta_1\wedge\eta_2\wedge\eta_3 \\
&\qquad - 4\Big(\epsilon b+\frac{4 a^3 }{c^2} + \frac{2\epsilon a^2b}{c^2} + \frac{2 c^2}{a} - \frac{\epsilon b c^2}{a^2} \Big) (\eta_{1}\wedge\omega_{1}+ \eta_{2}\wedge\omega_{2} ) \\
&\qquad - 4\epsilon b\Big(2 -\frac{b^2}{a^2} + \frac{2c^2}{a^2}+\frac{4\epsilon a b}{c^2} + \frac{2b^2}{c^2} -\frac{2\epsilon bc^2}{a^3} + \frac{b^2c^2}{a^4} \Big) \eta_{3}\wedge\omega_{3}
\end{split}
\end{equation}
In particular, \eqref{eq:Lap.phi} is coclosed and in the same form as \eqref{eq:varphi.ansatz}, so the Laplacian flow \eqref{eq:flow.ansatz} is well-defined.
\end{lemma}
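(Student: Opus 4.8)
The plan is to sidestep any new differential-form computation by using that the Hodge Laplacian commutes with the Hodge star. Both $\varphi_\epsilon$ and its dual $\psi_\epsilon=*_{\varphi_\epsilon}\varphi_\epsilon$ induce the same metric $g_{\mathbf{a},c}$ and, once the orientation has been fixed by the $\GG_2$-structure, the same Hodge star $*_{\varphi_\epsilon}=*_{\psi_\epsilon}$; since $**=\mathrm{id}$ on $3$- and $4$-forms in dimension $7$, we have $\varphi_\epsilon=*_{\varphi_\epsilon}\psi_\epsilon$, and therefore
\[
\Delta_{\varphi_\epsilon}\varphi_\epsilon \;=\; \Delta_{\varphi_\epsilon}\big(*_{\varphi_\epsilon}\psi_\epsilon\big) \;=\; *_{\varphi_\epsilon}\big(\Delta_{\psi_\epsilon}\psi_\epsilon\big)
\]
(the identity flagged just before the statement). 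Thus the lemma reduces entirely to applying $*_{\varphi_\epsilon}$, term by term, to the right-hand side of \eqref{eq:Lap.psi}, which is already available from Lemma~\ref{lem:Lap.psi}.

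Concretely, I would compute $*_{\varphi_\epsilon}$ of each of the three types of $4$-form occurring in \eqref{eq:Lap.psi} — namely $\pi^*\mathrm{vol}_N$, the symmetric combination $\eta_2\wedge\eta_3\wedge\omega_1+\eta_3\wedge\eta_1\wedge\omega_2$, and $\eta_1\wedge\eta_2\wedge\omega_3$ — using the Hodge-star relations recorded just before the statement. Those relations are themselves verified by passing to the orthonormal coframe in which $g_{\mathbf{a},c}$ of \eqref{eq:metric.ansatz} is diagonalised — namely $\{a\eta_1,\,a\eta_2,\,b\eta_3\}$ together with $c$ times an oriented orthonormal coframe on $N$ — and then invoking the self-duality of $\omega_1,\omega_2,\omega_3$ on $N$, their normalisation $\omega_i\wedge\omega_j=2\delta_{ij}\pi^*\mathrm{vol}_N$, and the orientation fixed by $\mathrm{vol}_\epsilon(t)$, through which the factor $\epsilon$ enters. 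Substituting the resulting expressions back into \eqref{eq:Lap.psi}, pairing each coefficient there with the correct term, and collecting the powers of $a,b,c$ then produces \eqref{eq:Lap.phi} directly.

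For the ``in particular'' assertion, I would observe that every term of the resulting formula is a multiple of $\eta_1\wedge\eta_2\wedge\eta_3$, of $\eta_1\wedge\omega_1+\eta_2\wedge\omega_2$, or of $\eta_3\wedge\omega_3$ — exactly the three types of term appearing in the ansatz $3$-form $\varphi_\epsilon$ of \eqref{eq:varphi.ansatz}. Hence $\Delta_{\varphi_\epsilon}\varphi_\epsilon$ is tangent to the family \eqref{eq:varphi.ansatz}, so the Laplacian flow \eqref{eq:flow.ansatz} stays inside the ansatz (reducing to an ODE system for $a,b,c$) and is well-defined there; and coclosedness holds since $\Delta_{\varphi_\epsilon}\varphi_\epsilon=*_{\varphi_\epsilon}\big(\mathrm{d}\mathrm{d}^*_{\psi_\epsilon}\psi_\epsilon\big)$ is the Hodge star of an exact, hence closed, form, while in any case every $\GG_2$-structure of the form \eqref{eq:varphi.general} is coclosed by Lemma~\ref{lem:torsion}. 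I expect no genuine obstacle here: the only delicate points are the verification of the three Hodge-star identities and keeping track of the $\epsilon$-signs and of which coefficient in \eqref{eq:Lap.psi} pairs with which building block — bookkeeping rather than anything conceptual.
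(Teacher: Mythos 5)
Your proposal is correct and is essentially the paper's own argument: the paper likewise obtains $\Delta_{\varphi_\epsilon}\varphi_\epsilon$ as the Hodge star of $\Delta_{\psi_\epsilon}\psi_\epsilon$ from Lemma \ref{lem:Lap.psi}, using exactly the three star identities recorded before the statement (verified in the orthonormal coframe $a\eta_1,a\eta_2,b\eta_3,c\,\pi^*(\cdot)$ with the self-duality and normalisation of the $\omega_i$), and your justification of the ``in particular'' clause (the result lies in the span of the ansatz building blocks, and $*\Delta_{\varphi_\epsilon}\varphi_\epsilon=\d\d^*\psi_\epsilon$ is exact, hence closed) matches the intended reasoning.
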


\begin{remark}
We observe that Lemma \ref{lem:Lap.phi} shows that the coclosed condition on the $\GG_2$-structure is preserved along the Laplacian flow in this situation.
\end{remark}

As a consequence of Lemma \ref{lem:Lap.phi} and \eqref{eq:varphi.ansatz}, we can write down the Laplacian flow \eqref{eq:flow.ansatz} as a system of ODEs for $a(t)$, $b(t)$, $c(t)$:
\begin{align*}
    \frac{\mathrm{d}}{\mathrm{d} t}(a^2b)&=\frac{8 a^2b}{c^2}\left(\frac{2a^2}{c^2} +\frac{b^2}{c^2} +2 + \frac{2\epsilon b}{a} - \frac{b^2}{a^2}\right);\\
    \frac{\mathrm{d}}{\mathrm{d} t}(ac^2)&= 4\Big(\epsilon b+\frac{4 a^3 }{c^2} + \frac{2\epsilon a^2b}{c^2} + \frac{2 c^2}{a} - \frac{\epsilon b c^2}{a^2} \Big);\\
    \frac{\mathrm{d}}{\mathrm{d} t}(bc^2)&=4  b\Big(2 -\frac{b^2}{a^2} + \frac{2c^2}{a^2}+\frac{4\epsilon a b}{c^2} + \frac{2b^2}{c^2} -\frac{2\epsilon bc^2}{a^3} + \frac{b^2c^2}{a^4} \Big).
\end{align*}
Just as for the Laplacian coflow, it turns out that the Laplacian flow is easier to analyze if we introduce scale-invariant quantities and a new time parameter.  The resulting equations we obtain then describe the rescaled Laplacian flow.

\begin{lemma}
Define
\[
X=\frac{a^2}{c^2} \quad\text{and}\quad Y=\frac{ab}{c^2}
\]
and introduce a new variable $s$ by 
\[
\frac{\mathrm{d} s}{\mathrm{d} t}=\frac{1}{c^2}.
\]
If we let $\dot{X}=\frac{\mathrm{d} X}{\mathrm{d} s}$ and  $\dot{Y}=\frac{\mathrm{d} Y}{\mathrm{d} s}$, then the Laplacian flow equations \eqref{eq:flow.ansatz} imply that
\begin{align}
    \dot{X} 
&=\frac{4}{X^2}\left(2X^2(2X-1)(X+1)+2\epsilon (1+2X-2X^2)XY-(1+X)Y^2\right);\label{eq:flow1}\\
    \dot{Y} 
    &=\frac{4Y}{X^2}\left(2X(2X-1)+\epsilon (1+3X-2X^2)Y+2(X-1)Y^2\right)
    .\label{eq:flow2}
\end{align}
\end{lemma}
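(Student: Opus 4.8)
The statement to prove is the Lemma deriving equations \eqref{eq:flow1}--\eqref{eq:flow2} for the rescaled Laplacian flow. This is essentially a computational lemma: we need to convert the ODE system for $a(t), b(t), c(t)$ coming from Lemma \ref{lem:Lap.phi} into the autonomous system for the scale-invariant variables $X = a^2/c^2$ and $Y = ab/c^2$ with respect to the reparametrised time $s$.

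\textbf{Plan.} The plan is to proceed by direct computation, exactly mirroring the (unstated but analogous) derivation for the Laplacian coflow in Lemma \ref{lem:coflow.eqns}. First I would record the three ODEs for $\frac{\d}{\d t}(a^2 b)$, $\frac{\d}{\d t}(ac^2)$, $\frac{\d}{\d t}(bc^2)$ displayed just before the lemma. From these one extracts the evolution of the basic monomials. A convenient route is to note that $X = \frac{a^2 b}{bc^2} = \frac{(a^2b)}{(bc^2)}$ and $Y = \frac{ab}{c^2} = \frac{(a^2b)}{(ac^2)}$, so $X$ and $Y$ are each ratios of two of the three quantities whose $t$-derivatives we already know; alternatively, work with $\ln X$ and $\ln Y$ and use $\frac{\d}{\d t}\ln X = 2\frac{\dot a}{a} - 2\frac{\dot c}{c}$ etc., solving the linear system for $\frac{\dot a}{a}, \frac{\dot b}{b}, \frac{\dot c}{c}$ from the three given logarithmic derivatives $\frac{\d}{\d t}\ln(a^2b) = 2\frac{\dot a}{a} + \frac{\dot b}{b}$, and so on. Either way one obtains $\frac{\d}{\d t}X$ and $\frac{\d}{\d t}Y$ as rational expressions in $a,b,c$.

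\textbf{Key steps.} (1) Compute $\frac{1}{X}\frac{\d X}{\d t}$ and $\frac{1}{Y}\frac{\d Y}{\d t}$ by combining the logarithmic derivatives of $a^2b$, $ac^2$, $bc^2$ with the correct integer coefficients; for instance $\ln X = \ln(a^2b) - \ln(bc^2)$ gives $\frac{\d}{\d t}\ln X$ directly as the difference of the first and third bracketed expressions (after dividing each by the appropriate monomial). (2) Substitute the explicit right-hand sides from Lemma \ref{lem:Lap.phi}, clear denominators, and check that every term reorganises into a polynomial in $X$ and $Y$ only — this is the step where the scale invariance of $X,Y$ manifests, and where any stray factor of $c$ would signal an arithmetic slip. (3) Apply the time change: since $\frac{\d s}{\d t} = \frac{1}{c^2}$, we have $\dot X = \frac{\d X}{\d s} = c^2 \frac{\d X}{\d t}$, and one verifies that the prefactor $c^2$ exactly cancels the leftover $\frac{1}{c^2}$ scaling in $\frac{\d X}{\d t}$, leaving the claimed expressions $\frac{4}{X^2}(\cdots)$. (4) Finally, expand and collect to match the stated right-hand sides, in particular confirming the signs of the $\epsilon$-linear terms $2\epsilon(1+2X-2X^2)XY$ in $\dot X$ and $\epsilon(1+3X-2X^2)Y$ in $\dot Y$, and the cubic terms $-(1+X)Y^2$ and $2(X-1)Y^2$.

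\textbf{Main obstacle.} There is no conceptual difficulty: the only real hazard is bookkeeping — keeping track of the numerous terms with denominators $c^2$, $a^2$, $a^3$, $a^4$ in \eqref{eq:Lap.phi} and ensuring they collapse correctly upon multiplying through. The cleanest safeguard is to verify the computation at the two known critical points of Lemma \ref{lem:np}: plugging $(X,Y) = (1/5,1/5)$ with $\epsilon = 1$ and $(X,Y) = (1,1)$ with $\epsilon = -1$ into \eqref{eq:flow1}--\eqref{eq:flow2} must give $\dot X = \dot Y = 0$, which provides a strong consistency check that the coefficients have been transcribed correctly. One also expects $X = Y$ to be an invariant locus when $\epsilon = -1$ restricted appropriately — a fact that can be read off the final equations and cross-checked against Lemma \ref{lem:np}. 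Granting these checks, the derivation is complete.
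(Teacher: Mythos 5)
Your proposal is correct and follows essentially the same route as the paper, which likewise obtains \eqref{eq:flow1}--\eqref{eq:flow2} by direct computation from the ODE system for $a^2b$, $ac^2$, $bc^2$ given by Lemma \ref{lem:Lap.phi}, passing to the scale-invariant ratios $X,Y$ and the reparametrised time $s$ (the paper states the result without displaying the bookkeeping). One small correction to your proposed cross-check: the locus $X=Y$ is preserved for $\epsilon=+1$, not $\epsilon=-1$ (for $\epsilon=-1$ it is preserved only at $X=Y=1$), as recorded in the lemma following this one; this does not affect the validity of your derivation.
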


\subsection{Critical points and dynamics}
We observe that \eqref{eq:flow1}--\eqref{eq:flow2} are just the negative of the equations \eqref{eq:coflow1}--\eqref{eq:coflow2} arising from the Laplacian coflow.  We therefore have the following.

\begin{lemma}
The only critical points for $X,Y>0$ to \eqref{eq:flow1}--\eqref{eq:flow2} are:
\[X=Y=\frac{1}{5}\quad\text{and}\quad \epsilon=1
\]
and
\[
X=Y=1\quad\text{and}\quad \epsilon=-1,
\]
and the condition $X=Y$ is preserved for $\epsilon=1$, but not preserved for $\epsilon=-1$ except when $X=Y=1$.
\end{lemma}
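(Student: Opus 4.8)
The plan is to exploit the remark recorded immediately before the statement, namely that the right-hand sides of \eqref{eq:flow1}--\eqref{eq:flow2} are precisely the negatives of those of \eqref{eq:coflow1}--\eqref{eq:coflow2}. First I would make this sign identity fully explicit: one checks that the cubic polynomials in $X$, $Y$ (with parameter $\epsilon$) appearing inside the brackets of \eqref{eq:flow1} and \eqref{eq:flow2} agree termwise with minus the brackets of \eqref{eq:coflow1} and \eqref{eq:coflow2}, the only points to watch being the sign rearrangements such as $1+2X-2X^2=-(2X^2-2X-1)$ and $1+3X-2X^2=-(2X^2-3X-1)$. Once this is in place, the whole statement follows from the elementary fact that negating a vector field reverses the direction (and rescales the speed) of its flow but changes neither its zero set nor its invariant subsets.

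For the critical-point claim: a point $(X,Y)$ with $X,Y>0$ is a critical point of \eqref{eq:flow1}--\eqref{eq:flow2} exactly when the two bracketed cubics vanish simultaneously, and by the sign identity these are the same equations that characterise the critical points of \eqref{eq:coflow1}--\eqref{eq:coflow2}. Hence Lemma \ref{lem:coflow.critpts} applies verbatim and yields the two claimed critical points, $X=Y=\tfrac15$ with $\epsilon=1$ and $X=Y=1$ with $\epsilon=-1$, together with the $(X,Y)=(\tfrac12,0)$ point on the boundary (which lies outside the range $X,Y>0$ under consideration).

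For invariance of the diagonal $\{X=Y\}$: substituting $X=Y$ into \eqref{eq:flow1}--\eqref{eq:flow2} with $\epsilon=1$ and invoking the sign identity together with the corresponding coflow computation (where one gets $\dot X=\dot Y=4(1-5X)$ on $\{X=Y\}$) gives $\dot X=\dot Y=-4(1-5X)$ there, so the diagonal is preserved when $\epsilon=1$. For $\epsilon=-1$, the same substitution produces $\dot X-\dot Y$ equal to a nonzero multiple of $(X-1)$ along $\{X=Y\}$ (up to the overall sign, exactly the expression governing the coflow in Lemma \ref{lem:coflow.critpts}), which vanishes for $X>0$ only at $X=1$; hence the diagonal fails to be invariant except at the critical point $X=Y=1$.

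I do not anticipate any real obstacle: the content is entirely contained in Lemma \ref{lem:coflow.critpts} and the observation that reversing the sign of a vector field leaves its critical points and invariant submanifolds untouched. The only step requiring genuine care is the first one — verifying the termwise sign/rearrangement identity between \eqref{eq:flow1}--\eqref{eq:flow2} and \eqref{eq:coflow1}--\eqref{eq:coflow2} — after which everything is immediate.
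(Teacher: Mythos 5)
Your proposal is correct and follows essentially the same route as the paper: the paper's justification is precisely the observation that \eqref{eq:flow1}--\eqref{eq:flow2} are the negatives of \eqref{eq:coflow1}--\eqref{eq:coflow2} (via the sign rearrangements you note, e.g.\ $1+2X-2X^2=-(2X^2-2X-1)$), so the zero set and the invariance or non-invariance of $\{X=Y\}$ are inherited directly from Lemma \ref{lem:coflow.critpts}. Your explicit check along the diagonal, giving $\dot X-\dot Y$ proportional to $X-1$ when $\epsilon=-1$, is a correct (if optional) elaboration of that citation.
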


 The observation that the Laplacian flow equations \eqref{eq:flow1}--\eqref{eq:flow2} are the negative of the Laplacian coflow equations also implies the following  result, based on our stability analysis for the Laplacian coflow, which gives Theorem \ref{thm:flow} in the Introduction.

\begin{theorem}
The only critical points of the  Laplacian flow \eqref{eq:flow.ansatz}, after rescaling,  are the nearly parallel $\GG_2$ structures $\varphi^{ts}$ and $\varphi^{np}$ inducing the 3-Sasakian and squashed Einstein metrics.  Both critical points are unstable sources under the rescaled Laplacian flow. 
\end{theorem}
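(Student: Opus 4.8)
The plan is to leverage the observation already recorded in the excerpt: the rescaled Laplacian flow equations \eqref{eq:flow1}--\eqref{eq:flow2} are precisely the negatives of the rescaled Laplacian coflow equations \eqref{eq:coflow1}--\eqref{eq:coflow2}. Consequently, the vector field governing the Laplacian flow in the $(X,Y)$-plane is the time-reversal of the Laplacian coflow vector field, and all the structural analysis we already carried out for the coflow transfers directly. In particular, the critical points for $X,Y>0$ are unchanged: $X=Y=\tfrac15$ with $\epsilon=1$ (corresponding to $\varphi^{np}$ by Remark \ref{rmk:psi.np} and Lemma \ref{lem:np}) and $X=Y=1$ with $\epsilon=-1$ (corresponding to $\varphi^{ts}$), and by Lemma \ref{lem:torsion2} these are exactly the nearly parallel $\GG_2$-structures in each branch. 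So the first step is simply to invoke the earlier critical-point lemma, restated for the flow, to identify the candidate critical points.

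Next I would establish instability. Since linearizing the Laplacian flow at either critical point gives the negative of the matrix obtained for the coflow, and the coflow linearizations were shown (in the proof of the Proposition preceding Remark \ref{rmk:coflow.collpase}) to have two negative eigenvalues at each critical point, the Laplacian flow linearizations have two \emph{positive} eigenvalues at each critical point. Hence each critical point is a linearly unstable source, and by the Hartman--Grobman theorem (or simply because a hyperbolic source for the linearization is a source for the nonlinear flow) it is a nonlinear source: there is a neighbourhood of the critical point from which every nontrivial trajectory escapes. This is the core of the argument and it is genuinely short given the work already done.

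The one point requiring a little care — and what I expect to be the main (modest) obstacle — is upgrading ``source'' to the statement that coclosed $\GG_2$-structures in our families which are not nearly parallel cannot flow to either critical point, i.e.\ global rather than merely local instability within the ansatz. Here I would argue as follows: the Laplacian flow is the time-reversal of the Laplacian coflow, which is a gradient flow (of the Hitchin volume functional), so the Laplacian flow in the $(X,Y)$-plane is a \emph{backward} gradient flow and in particular admits a strictly increasing Lyapunov-type function along nonconstant trajectories; therefore no nonconstant trajectory can converge to a critical point as $s\to+\infty$ (convergence would force the Lyapunov function to approach its value at the critical point monotonically from one side, while the gradient structure of the coflow — whose trajectories are exactly these trajectories run backwards and which we proved in Theorem \ref{thm:coflow.2} all converge to the critical point — shows the only $s\to+\infty$ limit points possible for the forward coflow are the critical points, hence the only $s\to-\infty$ limit points for the Laplacian flow, i.e.\ the Laplacian flow moves strictly away from them). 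Concretely: if a Laplacian flow trajectory $(X(s),Y(s))$ converged to the critical point as $s\to+\infty$, then reversing time would give a nonconstant Laplacian coflow trajectory emanating \emph{from} the critical point as $s\to-\infty$, but the linearization of the coflow at the critical point has only negative eigenvalues, so its unstable manifold is trivial and no such trajectory exists — contradiction. This closes the argument.

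Assembling these, the proof reads: by the preceding lemma the only critical points of the rescaled flow with $X,Y>0$ are $\varphi^{np}$ and $\varphi^{ts}$; linearizing \eqref{eq:flow1}--\eqref{eq:flow2} at each gives the negatives of the coflow linearizations computed earlier, hence matrices with two positive eigenvalues, so each is a hyperbolic source; and since the coflow has trivial unstable manifold at each critical point (all eigenvalues negative), no nonconstant Laplacian flow trajectory can limit onto either critical point, giving Theorem \ref{thm:flow}. No lengthy computation is needed beyond reading off signs of the eigenvalues already found for the coflow.
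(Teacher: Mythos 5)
Your proposal is correct and follows essentially the same route as the paper: the paper likewise deduces the theorem directly from the observation that \eqref{eq:flow1}--\eqref{eq:flow2} are the negatives of \eqref{eq:coflow1}--\eqref{eq:coflow2}, so the critical points coincide with those of the coflow and the linearizations' eigenvalues flip sign, turning the stable sinks at $X=Y=\tfrac15$ ($\epsilon=1$) and $X=Y=1$ ($\epsilon=-1$) into sources. Your additional remark that the stable manifold of each source is trivial (equivalently, the coflow's unstable manifold is trivial), so no non-critical trajectory in the ansatz can converge to either nearly parallel structure, is a correct and welcome sharpening of what the paper leaves implicit.
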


For completeness, we provide the dynamic plots in Figure \ref{fig:flow.plots} for the Laplacian flow \eqref{eq:flow.ansatz} for our ansatz with $\epsilon=1$ and $\epsilon=-1$.  We again indicate the curves $\gamma_X$ and $\gamma_Y$ where $\dot{X}$ and $\dot{Y}$ change sign, respectively.  Of course, the dynamics are simply the opposite of those which appear in the Laplacian coflow plot in Figure \ref{fig:coflow.plots}

\begin{figure}[ht]\caption{Dynamic plots for Laplacian flow for $\epsilon=1$ and $\epsilon=-1$}
    \centering
    \includegraphics[width=0.45\textwidth]{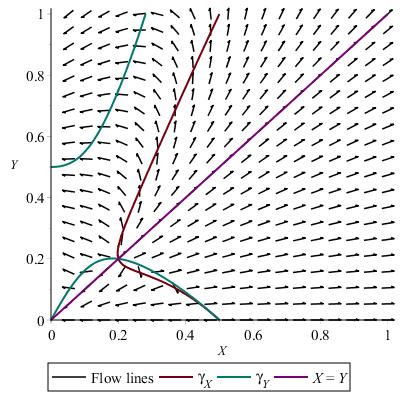}\qquad
     \includegraphics[width=0.45\textwidth]{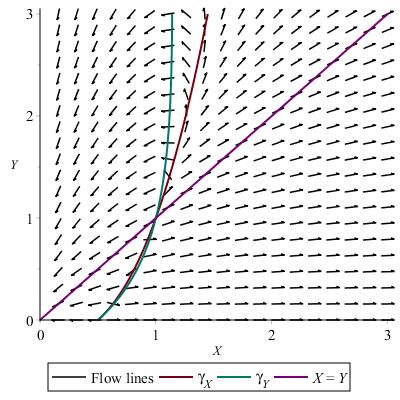}
    \label{fig:flow.plots}
\end{figure}

\begin{remark}
Figure \ref{fig:flow.plots} shows that several different behaviours are possible for the rescaled Laplacian flow when the initial condition is close to a nearly parallel $\GG_2$-structure.  One possibility is flowing to the origin, which corresponds again to the 7-manifold $M$ collapsing to the 4-manifold $N$ through the fibration \eqref{eq:M.Z.pi}.  Notice that the 3-dimensional fibres of \eqref{eq:M.Z.pi} are calibrated by the $\GG_2$-structure $\varphi$, i.e.~$\varphi$ restricts to be the volume form on the fibres, and so are associative by definition.  The fact that $\d\varphi\neq  0$ means that the volume of any compact associative 3-fold is not necessarily topologically determined, unlike for closed $\GG_2$-structures.  
\end{remark}

\section{Ricci flow}

In this section we compare the results we have obtained for the Laplacian flow and coflow of our $\GG_2$-structures with the behaviour of the induced metric of the $\GG_2$-structures under the Ricci flow.  We recall that this comparison is useful because the nearly parallel $\GG_2$-structures, which are critical points of the rescaled Laplacian flow and coflow  that we studied earlier, induce Einstein metrics which are then critical points of the Ricci flow up to scale.  It is also of interest because if we use the Laplacian coflow for coclosed $\GG_2$-structures \eqref{eq:Lap.coflow}, then the induced flow \eqref{eq:Ric.flow.psi} on the induced metric of the $\GG_2$-structures is the Ricci flow plus lower order terms determined algebraically by the torsion of the $\GG_2$-structure.  

\subsection{The flow equations}
We wish to study the Ricci flow for our ansatz
\begin{equation}\label{eq:Ricflow.ansatz}
\frac{\partial}{\partial t}g(t)=-2\mathrm{Ric}(t),
\end{equation}
if it exists. From general theory the Ricci flow will have short time existence starting from our metric ansatz \eqref{eq:metric.ansatz}, though it is not immediately obvious  that the ansatz will be preserved.

We begin by computing the Ricci curvature of   $g(t)$ from \eqref{eq:metric.ansatz}.  (Note that $g(t)$ is independent of   $\epsilon\in\{\pm 1\}$.)

\begin{lemma}\label{lem:Ricci}
Let $g(t)$ be as in \eqref{eq:metric.ansatz}.  The Ricci curvature $\mathrm{Ric}(t)$ of $g(t)$ satisfies
\begin{align*}
\mathrm{Ric} &=2\left(2-\frac{b^2}{a^2}+\frac{2a^4}{c^4}\right)(\eta_1^2+\eta_2^2)+2\left(\frac{b^4}{a^4}+\frac{2b^4}{c^4}\right)\eta_3^2+2\left(6-\frac{2a^2+b^2}{c^2}\right)\pi^*g_N.
\end{align*}
In particular, $\mathrm{Ric}(t)$ is of the same form as  \eqref{eq:metric.ansatz}, and so the Ricci flow \eqref{eq:Ricflow.ansatz} is well-defined.
\end{lemma}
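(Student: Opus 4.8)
The plan is to compute the full curvature of $g(t)$ by Cartan's structure equations in an orthonormal coframe adapted to the fibration \eqref{eq:M.Z.pi}, using only the structure equations \eqref{eq:deta}, \eqref{eq:domega}, the normalisation \eqref{eq:normalization}, and the fact that $(N,g_N)$ is anti-self-dual Einstein with scalar curvature $48$. First I would fix the coframe $\theta^1=a\eta_1$, $\theta^2=a\eta_2$, $\theta^3=b\eta_3$ on the vertical directions together with $\theta^4,\dots,\theta^7=c\,\pi^{*}(\text{a local }g_N\text{-orthonormal coframe})$, so that $g(t)=\sum_{i=1}^{7}(\theta^i)^2$. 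Differentiating and substituting \eqref{eq:deta} gives
\[
\d\theta^1=-\tfrac{2}{b}\,\theta^2\wedge\theta^3-\tfrac{2a}{c^2}\,(c^2\omega_1),
\]
and cyclically, where by \eqref{eq:normalization} each $c^2\omega_i$ is one of the standard self-dual $2$-forms built from $\theta^4,\dots,\theta^7$. Solving $\d\theta^i=-\sum_j\phi^i_j\wedge\theta^j$ with $\phi^i_j=-\phi^j_i$ then produces the Levi-Civita connection forms: the purely horizontal block is the pullback of the Levi-Civita connection of $(N,g_N)$, the purely vertical block is the connection of the left-invariant (Berger) metric $a^2(\eta_1^2+\eta_2^2)+b^2\eta_3^2$ on the $S^3$-fibres, and the mixed blocks are the O'Neill $A$-tensor terms generated by the $\omega_i$ in $\d\eta_i$.

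Next I would form the curvature $2$-forms $\Psi^i_j=\d\phi^i_j+\sum_k\phi^i_k\wedge\phi^k_j$. The horizontal blocks require the curvature of $g_N$, for which I would use that $(N,g_N)$ is Einstein with $\mathrm{Ric}_N=12\,g_N$ and has $W^+=0$, so its curvature acts as $4\,\mathrm{Id}$ on self-dual $2$-forms (consistently with \eqref{eq:domega}); the vertical blocks are the standard Berger-sphere curvature; and the mixed blocks are controlled by \eqref{eq:deta}--\eqref{eq:domega}. Contracting $\Psi^i_j$ yields $\mathrm{Ric}$. By the manifest symmetry of the coframe and the structure equations under rotating $(\eta_1,\eta_2)$ one has $\mathrm{Ric}(\theta^1,\theta^1)=\mathrm{Ric}(\theta^2,\theta^2)$, and by $\mathrm{SU}(2)$-equivariance the horizontal part of $\mathrm{Ric}$ is a function times $\pi^{*}g_N$ with no cross terms; hence the whole computation reduces to the three coefficients in the stated formula. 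The ``in particular'' claim is then immediate: $\mathrm{Ric}(t)$ has the form \eqref{eq:metric.ansatz}, which together with the short-time existence of Ricci flow already noted shows \eqref{eq:Ricflow.ansatz} stays in the ansatz. As consistency checks, setting $a=b=c=1$ must give $\mathrm{Ric}=6g_M$ and $a=b=\tfrac{1}{\sqrt5}c$ must give a positive multiple of $\tilde g_M$, both of which the stated formula delivers.

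The main obstacle is the bookkeeping of the mixed vertical--horizontal connection and curvature terms arising from the curvature $2$-forms $\omega_i$ in $\d\eta_i$ (the O'Neill correction terms), because the fibres of $M\to N$ are not totally geodesic for $g(t)$ once $a\neq b$; after these contributions are organised the remaining horizontal and vertical pieces are routine. An alternative that avoids some of this is to combine O'Neill's submersion formulas with Besse's formulas for the canonical variation of a Riemannian submersion with Einstein base and $3$-Sasakian fibres, treating the $a/b$ anisotropy as a further left-invariant deformation of the fibre metric; but the moving-frame computation above is the most self-contained and is what I would carry out.
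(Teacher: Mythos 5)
Your plan is essentially the computation the paper carries out: an orthonormal frame adapted to the submersion \eqref{eq:M.Z.pi}, the structure equations \eqref{eq:deta}--\eqref{eq:domega}, the normalisation \eqref{eq:normalization}, and the fact that $g_N$ is anti-self-dual Einstein with scalar curvature $48$. The paper merely packages the computation through O'Neill's submersion formulas in the rescaled orthonormal frame $\tilde E_1=E_1/a$, $\tilde E_2=E_2/a$, $\tilde E_3=E_3/b$, $\tilde B_l=B_l/c$, rather than through connection and curvature $2$-forms; your consistency checks ($\mathrm{Ric}=6g_M$ at $a=b=c=1$, and Einstein at $a=b=c/\sqrt5$) are correct and are delivered by the stated formula.

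One assertion in your discussion is wrong, though harmlessly so: the fibres of \eqref{eq:M.Z.pi} \emph{are} totally geodesic for $g(t)$ even when $a\neq b$. Since $a,b,c$ are constants on $M$, for vertical $U,V\in\{E_1,E_2,E_3\}$ and a basic horizontal field $X$ the Koszul formula gives $2g(t)(\nabla_U V,X)=g(t)([U,V],X)-g(t)([U,X],V)-g(t)([V,X],U)=0$, because $[E_i,E_j]=2E_k$ is vertical while $[E_i,X]$ is horizontal; equivalently, the connection terms $\tilde\nabla_{\tilde E_i}\tilde E_j$ computed in the paper's proof are purely vertical. This is exactly what the paper exploits: the O'Neill tensor $T$ vanishes identically, and the only mixed contributions come from the $A$-tensor (the $\omega_i$ terms in \eqref{eq:deta}), now weighted by $a/c^2$ in the $\eta_1,\eta_2$ directions and $b/c^2$ in the $\eta_3$ direction, while the $a\neq b$ anisotropy otherwise enters only through the Berger-sphere curvature of the fibre metric $a^2(\eta_1^2+\eta_2^2)+b^2\eta_3^2$. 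So the bookkeeping is lighter than you anticipate, and your Cartan-coframe computation, carried out as described, reproduces the stated formula; the ``in particular'' conclusion then follows exactly as you say.
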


\begin{proof}
Our approach is to use the O'Neill formulas for Riemannian submersions \cite{O}. 

We let $\nabla$ denote the Levi-Civita connection of the 3-Sasakian metric $g_M$.  Recall the orthonormal Killing fields $E_1,E_2,E_3$ in Definition \ref{dfn:3Sak} and let $B_0,B_1,B_2,B_3$ be local orthonormal vector fields on $M$ which are horizontal for the Riemannian submersion \eqref{eq:M.Z.pi}.  It is then straightforward to compute that
\begin{align*}
    \nabla_{E_i}E_j=\sum_{k=1}^3\epsilon_{ijk}E_k, \quad \nabla_{E_i}B_l=0,
    \quad \nabla_{B_l}E_i=-\sum_{m=0}^4\sigma_{ilm}B_m,\quad (\nabla_{B_l}B_m)^{\mathcal{V}}=\sum_{i=1}^3\sigma_{ilm}E_i,
\end{align*}
where $()^{\mathcal{V}}$ indicates the vertical projection with respect to \eqref{eq:M.Z.pi}, $\epsilon_{ijk}$ is the sign of the permutation $(i,j,k)$ of $(1,2,3)$, and $\sigma_{ilm}$ is skew-symmetric in $l,m$ satisfying $\sigma_{i0j}=1$ and $\sigma_{ijk}=\epsilon_{ijk}$ for $i,j,k\in\{1,2,3\}$.  In particular, we notice that the fibres of the Riemannian submersion \eqref{eq:M.Z.pi} are totally geodesic, so the O'Neill tensor often denoted $T$ vanishes, and that the other O'Neill tensor, often called $A$, is horizontally divergence-free. 

We then let $\tilde{\nabla}$ denote the Levi-Civita connection of $g=g(t)$ and let
\begin{gather*}
    \tilde{E}_1=\frac{E_1}{a},\quad \tilde{E}_2=\frac{E_2}{a},\quad \tilde{E}_3=\frac{E_3}{b},\quad \tilde{B}_l=\frac{B_l}{c}.
\end{gather*}
We may then compute that the quantities we need to complete our computation are:
\begin{gather*}
[\tilde{E}_2,\tilde{E}_3]=\frac{2}{b}\tilde{E}_1,\quad [\tilde{E}_3,\tilde{E}_1]=\frac{2}{b}\tilde{E}_2,\quad [\tilde{E}_1,\tilde{E}_2]=\frac{2b}{a^2}\tilde{E}_3,\\
\tilde{\nabla}_{\tilde{E}_2}\tilde{E}_3=  \frac{b}{a^2}\tilde{E}_1,\quad  \tilde{\nabla}_{\tilde{E}_3}\tilde{E}_1=\frac{2a^2-b^2}{a^2b}\tilde{E}_2,\quad  \tilde{\nabla}_{\tilde{E}_1}\tilde{E}_2=\frac{b}{a^2}\tilde{E}_3,\quad \tilde{\nabla}_{\tilde{E}_i}\tilde{B}_l=0,\\
 \tilde{\nabla}_{\tilde{B}_l}\tilde{E}_1=-\frac{a}{c^2}\sum_{m=0}^4\sigma_{1lm}\tilde{B}_m,\quad \tilde{\nabla}_{\tilde{B}_l}\tilde{E}_2=-\frac{a}{c^2}\sum_{m=0}^4\sigma_{2lm}\tilde{B}_m,\quad \tilde{\nabla}_{\tilde{B}_l}\tilde{E}_3=-\frac{b}{c^2}\sum_{m=0}^4\sigma_{3lm}\tilde{B}_m.
\end{gather*}
Using the fact that the metric on $N$ is Einstein with scalar curvature $48$ and the O'Neill formulas, we see that the Ricci tensor of $g$ is diagonal and satisfies:
\begin{align*}
\mathrm{Ric}(\tilde{E}_1,\tilde{E}_1)=\mathrm{Ric}(\tilde{E}_2,\tilde{E}_2)&=\frac{4}{a^2}-\frac{2b^2}{a^4}+\frac{4a^2}{c^4},\\
\mathrm{Ric}(\tilde{E}_3,\tilde{E}_3)&=\frac{2b^2}{a^4}+\frac{4b^2}{c^4},\\
\mathrm{Ric}(\tilde{B}_l,\tilde{B}_l)&=\frac{12c^2-2(2a^2+b^2)}{c^4}
\end{align*}
as desired.
\end{proof}

\noindent Given Lemma  \ref{lem:Ricci}, we can now write down the Ricci flow equations for our ansatz in \eqref{eq:metric.ansatz}:
\begin{align*}
    \frac{\mathrm{d}}{\mathrm{d} t}(a^2)&= -4\left(2-\frac{b^2}{a^2}+2\frac{a^4}{c^4}\right);
    \\
        \frac{\mathrm{d}}{\mathrm{d} t}(b^2)&= -4\left(\frac{b^4}{a^4}+2\frac{b^4}{c^4}\right);
        \\
     \frac{\mathrm{d}}{\mathrm{d} t}(c^2)&= -4\left(6-2\frac{a^2}{c^2}-\frac{b^2}{c^2}\right).
\end{align*}
To simplify the analysis of these equations we introduce some scale-invariant quantities and rescale the time parameter to  find the following after a short computation.

\begin{lemma}\label{lem:Ric.flow}
Define
\begin{equation*}
    A=\frac{a^2}{c^2}\quad\text{and}\quad B=\frac{b^2}{c^2}
\end{equation*}
and introduce a new variable $s$ by 
\[
\frac{\mathrm{d} s}{\mathrm{d} t}=\frac{1}{c^2}.
\]
If we let $\dot{A}=\frac{\mathrm{d} A}{\mathrm{d} s}$ and $\dot{B}=\frac{\mathrm{d} B}{\mathrm{d} s}$, then the Ricci flow equations for \eqref{eq:metric.ansatz} imply that
\begin{align}
    \dot{A}&=\frac{4(1-A)}{A}\left(B(1+A)-2A(1-2A)\right);\label{eq:Ric.flow.A}
\\
\dot{B}&=\frac{4B}{A^2}\left(2A^2(3-A)-B(1+3A^2)\right).\label{eq:Ric.flow.B}
\end{align}
\end{lemma}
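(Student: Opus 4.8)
The plan is a direct computation starting from the three Ricci flow ODEs for $a^2$, $b^2$, $c^2$ recorded just above the statement. First I would record the effect of the time reparametrization: since $\frac{\d s}{\d t}=\frac{1}{c^2}$, any scale-invariant quantity $F$ satisfies $\dot F=\frac{\d F}{\d s}=c^2\frac{\d F}{\d t}$. Applying this to $A=\frac{a^2}{c^2}$ via the quotient rule gives
\[
\dot A=c^2\left(\frac{1}{c^2}\frac{\d(a^2)}{\d t}-\frac{a^2}{c^4}\frac{\d(c^2)}{\d t}\right)=\frac{\d(a^2)}{\d t}-A\,\frac{\d(c^2)}{\d t},
\]
and similarly $\dot B=\frac{\d(b^2)}{\d t}-B\,\frac{\d(c^2)}{\d t}$.

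Next I would substitute the three given ODEs into these two expressions and rewrite everything in terms of $A$ and $B$ using $\frac{b^2}{a^2}=\frac{B}{A}$, $\frac{a^4}{c^4}=A^2$, $\frac{b^4}{a^4}=\frac{B^2}{A^2}$, $\frac{b^4}{c^4}=B^2$, $\frac{a^2}{c^2}=A$ and $\frac{b^2}{c^2}=B$. This yields
\[
\dot A=-4\Big(2-\tfrac{B}{A}+2A^2\Big)+4A\big(6-2A-B\big),\qquad
\dot B=-4\Big(\tfrac{B^2}{A^2}+2B^2\Big)+4B\big(6-2A-B\big).
\]
Finally I would clear denominators and collect monomials in $A,B$, and compare with the expansions of $\frac{4(1-A)}{A}\big(B(1+A)-2A(1-2A)\big)$ and $\frac{4B}{A^2}\big(2A^2(3-A)-B(1+3A^2)\big)$; matching terms gives \eqref{eq:Ric.flow.A}--\eqref{eq:Ric.flow.B}.

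There is no real obstacle here: the argument is elementary polynomial algebra. The only point requiring care is the bookkeeping in the chain rule for the rescaled time parameter, in particular retaining the cross terms $-A\,\frac{\d(c^2)}{\d t}$ and $-B\,\frac{\d(c^2)}{\d t}$ coming from differentiating the denominator $c^2$; once these are in place the verification of the stated factorized forms is routine.
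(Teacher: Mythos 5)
Your computation is correct and follows exactly the route the paper takes: the paper states the lemma as a ``short computation'' obtained by substituting the scale-invariant quantities $A,B$ and the reparametrized time $s$ into the three Ricci flow ODEs for $a^2,b^2,c^2$, which is precisely your chain-rule plus polynomial bookkeeping argument. Your intermediate expressions $\dot A=-4\bigl(2-\tfrac{B}{A}+2A^2\bigr)+4A(6-2A-B)$ and $\dot B=-4\bigl(\tfrac{B^2}{A^2}+2B^2\bigr)+4B(6-2A-B)$ do expand to the stated factorized forms, so nothing is missing.
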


\noindent The equations \eqref{eq:Ric.flow.A}--\eqref{eq:Ric.flow.B} describe the rescaled Ricci flow.

\subsection{Critical points and dynamics}  It is straightforward to find the critical points and observe some basic facts about the dynamics of the flow equations \eqref{eq:Ric.flow.A}--\eqref{eq:Ric.flow.B} as follows.

\begin{lemma}\label{lem:Ric.crit}
The only critical points for $A,B>0$ of the flow equations \eqref{eq:Ric.flow.A}--\eqref{eq:Ric.flow.B} are 
$$A=B=\frac{1}{5}\quad\text{and}\quad A=B=1.$$ 
Moreover, the lines $A=B$ and $A=1$ are preserved by the flow.
\end{lemma}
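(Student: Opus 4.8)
The plan is to determine all critical points by solving $\dot A = \dot B = 0$ algebraically, and then to verify the invariance of the two lines using the tangency criterion together with uniqueness of solutions to the ODE system \eqref{eq:Ric.flow.A}--\eqref{eq:Ric.flow.B}.

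\textbf{Critical points.} Since $A,B>0$, the prefactor $\tfrac{4B}{A^2}$ in \eqref{eq:Ric.flow.B} is nonzero, so $\dot B=0$ is equivalent to $B=\tfrac{2A^2(3-A)}{1+3A^2}$ (note this also forces $A<3$). For $\dot A=0$, the factor $(1-A)$ in \eqref{eq:Ric.flow.A} produces two branches: either $A=1$, or $B(1+A)=2A(1-2A)$. In the first branch, substituting $A=1$ into the formula for $B$ gives $B=1$, so $A=B=1$. In the second branch, $B=\tfrac{2A(1-2A)}{1+A}$, and positivity of $B$ forces $A\in(0,\tfrac12)$; equating the two expressions for $B$, cancelling the common factor $2A>0$ and clearing denominators, one obtains $(1-2A)(1+3A^2)=A(3-A)(1+A)$, which simplifies to the cubic $5A^3-A^2+5A-1=0$. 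This factors as $(5A-1)(A^2+1)=0$, whose only real root is $A=\tfrac15$, and back-substitution yields $B=\tfrac15$. Hence the only critical points with $A,B>0$ are $A=B=\tfrac15$ and $A=B=1$.

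\textbf{Invariant lines.} The line $A=1$ is immediate: the right-hand side of \eqref{eq:Ric.flow.A} carries the factor $(1-A)$, so $\dot A\equiv 0$ on $\{A=1\}$, and uniqueness of solutions shows this set is preserved. For the diagonal $\{A=B\}$ it suffices, again by uniqueness, to check that the vector field $(\dot A,\dot B)$ is tangent to the line along it, i.e.\ that $\dot A=\dot B$ whenever $B=A$. Substituting $B=A$ into \eqref{eq:Ric.flow.A}--\eqref{eq:Ric.flow.B} and simplifying, both $\dot A$ and $\dot B$ reduce to $4(1-A)(5A-1)$; hence $\tfrac{d}{ds}(A-B)=0$ on the line and $\{A=B\}$ is preserved.

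\textbf{Main difficulty.} The argument is elementary throughout, so there is no real obstacle; the only steps that require a little care are carrying out the factorisation $5A^3-A^2+5A-1=(5A-1)(A^2+1)$ correctly (so that one sees the second branch contributes a single positive critical point) and keeping track of the constraint $B>0$ in that branch, namely $A<\tfrac12$, which is consistent with the root $A=\tfrac15$.
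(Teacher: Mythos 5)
Your proof is correct and is exactly the ``straightforward'' computation the paper has in mind (the paper omits the details): the factorisation $5A^3-A^2+5A-1=(5A-1)(A^2+1)$ and the constraint checks are right, and your verification that both sides reduce to $4(1-A)(5A-1)$ on the diagonal agrees with the formula \eqref{eq:Ric.AB} used later in the paper's proof of Theorem \ref{thm:Ric.2}.
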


Lemma \ref{lem:Ric.flow} leads us to draw the plot in Figure \ref{fig:Ric} showing the dynamics of the equations. If we denote the curve where $B(1+A)=2A(1-2A)$ for $A,B\geq 0$ by $\gamma_A$ and the curve $2A^2(3-A)=B(1+3A^2)$ for $A,B\geq 0$ by $\gamma_B$, these are the curves which, together with the line $A=1$, determine the sign of $\dot{A}$ and $\dot{B}$.  

\begin{figure}[ht]\caption{Dynamic plot for Ricci flow}\label{fig:Ric}
\includegraphics[width=0.5\textwidth]{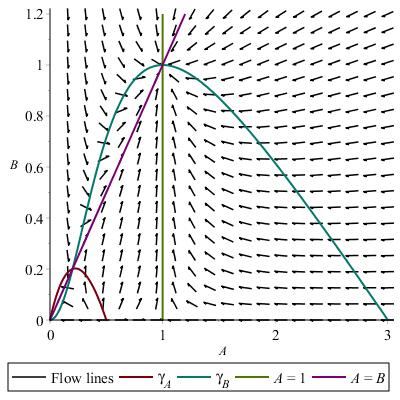} \quad \includegraphics[width=0.35\textwidth]{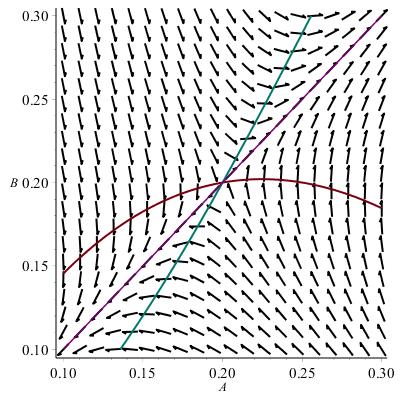}
\end{figure}

The first plot in Figure \ref{fig:Ric} indicates that there is a stable critical point at $A=B=1$, where $\gamma_A$ and $A=1$ intersect. In the second plot, we focus  on the  critical point $A=B=1/5$ where $\gamma_A$ and $\gamma_B$ intersect, which appears to be unstable (in fact, a saddle point).  

\begin{remark}\label{rmk:Ric.collapse.1}
The curves $\gamma_A$ and $\gamma_B$ also intersect at $A=B=0$, which is a degenerate critical point in terms of the equations \eqref{eq:Ric.flow.A}--\eqref{eq:Ric.flow.B}, since they are not defined there.  From the geometric point of view, here $M^7$ has collapsed to the 4-dimensional orbifold $N$ (or a point)  in the fibration \eqref{eq:M.Z.pi}, since $a=b=0$ in \eqref{eq:metric.ansatz}, and so any flow lines tending to $(0,0)$ converge to the Einstein metric $g_N$ on $N$ up to scale (or $0$).  We can avoid the possibility of converging to a point along the rescaled Ricci flow, since the diameter will stay bounded away from $0$.
\end{remark}

\begin{remark}
One may observe  from \eqref{eq:Ric.flow.A}--\eqref{eq:Ric.flow.B} that there are two other degenerate critical points if we allow $A=0$ or $B=0$:
$$(A,B)=\left(\frac{1}{2},0\right)\quad\text{and}\quad (A,B)=(1,0).$$
  These critical points correspond to a collapsed situation where the $S^1$ fibres corresponding to $E_3$ (in the notation of Definition \ref{dfn:3Sak}) now have zero size since $b=0$ in \eqref{eq:ansatz}. The 7-manifold  $M$ has therefore collapsed to the \emph{twistor space} $Z^6$, which is a 2-sphere bundle over $N^4$.  The  metrics with $(A,B)=(1,0)$ and $(A,B)=(1/2,0)$ correspond to two Einstein metrics on the twistor space $Z$: the canonical \emph{K\"ahler--Einstein} and \emph{nearly K\"ahler} metric, respectively.  It is natural to see these Einstein metrics appear at the boundary of our Ricci flow ansatz as critical points.  However, it is interesting to note that the K\"ahler--Einstein metric on $Z$ does not play a distinguished role in the study of the Laplacian coflow and Laplacian flow, whereas the nearly K\"ahler metric does.
\end{remark}

 We now study the dynamics of the flow and show the following, recalling the fibration \eqref{eq:M.Z.pi} of $M^7$ over a 4-dimensional base $N^4$.
 
\begin{theorem}\label{thm:Ric.2} The only critical points for the Ricci flow \eqref{eq:Ricflow.ansatz}, after rescaling, are the 3-Sasakian  metric $g_M$ and the squashed Einstein metric $\tilde{g}_M$ on $M^7$.
The 3-Sasakian metric is a stable critical point for the rescaled Ricci flow within the ansatz \eqref{eq:metric.ansatz}, whilst the squashed Einstein metric is an unstable critical point which is a saddle point. Moreover, there is an open set of initial metrics in the ansatz \eqref{eq:metric.ansatz}, which can be chosen arbitrarily near $\tilde{g}_M$, such that they flow either to $g_M$ or  to the collapsed limit (even after rescaling) where the 3-dimensional fibres in \eqref{eq:M.Z.pi} shrink to $0$ and  the flow converges to the Einstein 4-orbifold $(N,g_N)$.
\end{theorem}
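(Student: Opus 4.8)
The plan is to reduce the whole statement, via Lemma~\ref{lem:Ric.flow}, to a planar phase-plane analysis of \eqref{eq:Ric.flow.A}--\eqref{eq:Ric.flow.B} in the scale-invariant variables $A=a^2/c^2$, $B=b^2/c^2$: the Ricci flow \eqref{eq:Ricflow.ansatz} starting from a metric in the ansatz \eqref{eq:metric.ansatz} exists for short time by Hamilton's theorem and preserves the ansatz by Lemma~\ref{lem:Ricci}, so by uniqueness it is governed by these ODEs after the reparametrisation $\d s/\d t=1/c^2$. First I would identify the two interior critical points with the two metrics: at $(A,B)=(1,1)$ we have $a=b=c$, hence $g=c^2g_M$, and at $(A,B)=(1/5,1/5)$ we have $a=b=c/\sqrt5$, hence $g=c^2\tilde g_M$; combined with Lemma~\ref{lem:Ric.crit} this already yields the first sentence of the theorem. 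Linear stability is then a $2\times2$ eigenvalue computation: linearising \eqref{eq:Ric.flow.A}--\eqref{eq:Ric.flow.B} at $(1,1)$ gives the matrix $-16\,\mathrm{Id}$, so $g_M$ is a hyperbolic sink; linearising at $(1/5,1/5)$ gives a matrix with negative trace and negative determinant, hence real eigenvalues of opposite sign, so $\tilde g_M$ is a saddle, with the unstable eigendirection lying along the invariant line $A=B$ of Lemma~\ref{lem:Ric.crit}.

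For the ``moreover'' part I would use the two invariant lines. On $A=B$ the system reduces to $\dot A=4(1-A)(5A-1)$, which is positive on $(1/5,1)$ and negative on $(0,1/5)$; hence the open segment $\{A=B:1/5<A<1\}$ flows monotonically into $(1,1)$, while the open segment $\{A=B:0<A<1/5\}$ flows monotonically to the degenerate equilibrium $(0,0)$. The first segment therefore lies in the basin of attraction $\mathcal B$ of the hyperbolic sink $(1,1)$; since $\mathcal B$ is open and its closure contains $(1/5,1/5)$, it is an open set of initial data --- containing points arbitrarily close to $\tilde g_M$ --- all of whose flow lines converge to $g_M$.

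For the collapsing behaviour I would build a forward-invariant trapping region. With $\gamma_A,\gamma_B$ as in the discussion after Lemma~\ref{lem:Ric.flow}, set $\Omega=\{(A,B):0<A<1/5,\ \gamma_B(A)<B<\gamma_A(A)\}$; a short calculation shows $\gamma_B<\gamma_A$ on $(0,1/5)$ and that both curves are increasing there, so $\Omega$ is a lens pinching at $(0,0)$ and $(1/5,1/5)$ and containing the segment $\{A=B:0<A<1/5\}$. Checking the signs of $(\dot A,\dot B)$ on the two boundary arcs ($\dot A=0,\ \dot B<0$ on $\gamma_A$; $\dot B=0,\ \dot A<0$ on $\gamma_B$) shows the vector field points strictly into $\Omega$, so $\Omega$ is forward-invariant; as $\dot A<0$ throughout $\Omega$ there are no periodic orbits and $A(s)$ strictly decreases, forcing every trajectory in $\Omega$ to converge to $(0,0)$ (it cannot limit on $(1/5,1/5)$ since $A<1/5$ is preserved). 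I would then translate this into geometry exactly as in Remark~\ref{rmk:Ric.collapse.1}: since $A,B$ are scale-invariant, in any rescaling (e.g.\ to fixed diameter) the fibre directions of \eqref{eq:M.Z.pi} shrink and $g/c^2=A(\eta_1^2+\eta_2^2)+B\eta_3^2+\pi^*g_N\to\pi^*g_N$, so the flow converges to the Einstein $4$-orbifold $(N,g_N)$, with collapse to a point excluded by the normalisation. Then $V=\mathcal B\cup\Omega$ is the required open set near $\tilde g_M$.

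The main obstacle is the control near the degenerate equilibrium $(0,0)$, where \eqref{eq:Ric.flow.A}--\eqref{eq:Ric.flow.B} are singular (factors $1/A$, $1/A^2$): one must check that the relevant products remain bounded so that solutions persist up to the collapsed limit, establish the forward-invariance of $\Omega$ cleanly, and exclude a trajectory asymptoting to a non-equilibrium boundary point of $\Omega$. A secondary point requiring care is the rigorous passage from the ODE conclusions back to statements about the Ricci flow of metrics and about Gromov--Hausdorff convergence to $(N,g_N)$; the identification of critical points, the linearisations, and the dynamics on the two invariant lines are all elementary.
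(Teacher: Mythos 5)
Your proposal is correct and follows essentially the same route as the paper: reduction to the planar system \eqref{eq:Ric.flow.A}--\eqref{eq:Ric.flow.B}, the linearizations $-16\,\mathrm{Id}$ at $(1,1)$ and a saddle (eigenvalues $16$ and $-208/5$, unstable direction along $A=B$) at $(1/5,1/5)$, the invariant line $A=B$, and the same trapping region between $\gamma_A$ and $\gamma_B$ (your $\Omega$ is the paper's $\mathcal{R}$) with the collapse interpreted via Remark \ref{rmk:Ric.collapse.1}. The only minor variation is that you obtain the open set of data flowing to $g_M$ from openness of the basin of the hyperbolic sink together with the invariant segment $\{A=B,\ 1/5<A<1\}$, whereas the paper argues directly that the region above $\gamma_A$ with $A>1/5$ converges to $(1,1)$; your auxiliary claims (e.g.\ $\gamma_B<\gamma_A$ on $(0,1/5)$ and the boundary sign checks) are correct, and the issues you flag near $(0,0)$ are treated at the same level of detail in the paper itself.
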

 
 \begin{proof}  We recall the curves $\gamma_A$, $\gamma_B$ introduced after Lemma \ref{lem:Ric.crit} and plotted in Figure \ref{fig:Ric}.
 
For $A>1$ we have that $\dot{A}<0$ and hence $A$ remains bounded as long as the flow exists. We also see that $\dot{B}<0$ whenever $B$ is above the curve $\gamma_B$ (which passes through $(0,0)$ and $(3,0)$) and hence also remains bounded as long as the flow exists.  Since
\[
\frac{\d}{\d t}(c^2)=-4(6-2A-B),
\]
the right-hand side is bounded and so $c^2$ cannot blow up in finite time.  Moreover, $c^2$ goes to zero at a linear rate in $t$ at most, and so we can integrate $\frac{1}{c^2}$ with respect to $t$ to obtain $s$ in Lemma \ref{lem:Ric.flow}. 

We linearize  \eqref{eq:Ric.flow.A}--\eqref{eq:Ric.flow.B} around $(A,B)=(1,1)$ to obtain:
$$\dot{A}=-16A\quad\text{and}\quad \dot{B}=-16B.$$
Hence $(1,1)$ is a stable critical point, which corresponds to the 3-Sasakian  metric.  

We now observe that if we start above the curve $\gamma_A$, which passes through $(0,0)$ and $(\frac{1}{2},0)$, then   $\dot{A}>0$ for $A<1$, so $A$ is always bounded away from $0$ in this region, depending on its initial value.  Recalling that $\dot{A}<0$ for $A>1$, we deduce that all of the terms in \eqref{eq:Ric.flow.A}--\eqref{eq:Ric.flow.B} are bounded and there can be no periodic orbits in this region as $A$ cannot cross the line $A=1$ by Lemma \ref{lem:Ric.crit}.   We also note that $\dot{B}>0$ when $A\in(1/5,1)$ and $B$ is near $0$ since then $(A,B)$ lies below $\gamma_B$, and thus $B$ is bounded away from $0$  when $A\in(1/5,1)$.  

Since $(1/5,1/5)$ lies on $\gamma_A$, we deduce that the flow will converge to the critical point at $(1,1)$ if the initial value of $(A,B)$ lies above or to the right of the curve $\gamma_A$ and $A$ is greater than $1/5$ initially.  This proves the statement about initial conditions near $\tilde{g}_M$ which flow to $g_M$, since $\tilde{g}_M$ corresponds to $(1/5,1/5)$.  

Now if we specialize to the line $A=B$ which, by Lemma \ref{lem:Ric.flow}, is preserved, we see that
\begin{equation}\label{eq:Ric.AB}
    \dot{A}=4(1-A)(5A-1).
\end{equation}
We see immediately that for $A=B<1/5$ we have $\dot{A}<0$, for $A=B\in (1/5,1)$ we have $\dot{A}>0$. (For $A=B>1$ we have $\dot{A}<0$ again, as expected by the stability of $(1,1)$.) Thus, the critical point at $(1/5,1/5)$ is unstable, even within the restricted ansatz when $A=B$.  Furthermore, if we linearize the system \eqref{eq:Ric.flow.A}--\eqref{eq:Ric.flow.B} around $(1/5,1/5)$ we obtain
\[
\dot{A}=-\frac{16}{5}A+\frac{96}{5}B\quad\text{and}\quad \dot{B}=\frac{192}{5}A-\frac{112}{5}B,
\]
from which it follows that $(1/5,1/5)$ is a saddle point, as the matrix corresponding to the above dynamical system has one positive and one negative eigenvalue: $16$ and $-208/5$.  

We see from \eqref{eq:Ric.AB} that we have initial conditions for \eqref{eq:Ric.flow.A}--\eqref{eq:Ric.flow.B}, even with $A=B$, such that   $A$ and $B$ go to $0$.  In fact, suppose we choose any initial condition in the region $\mathcal{R}$ below $\gamma_A$ but above $\gamma_B$, which means in particular that $A,B\in(0,\frac{1}{5})$.  Since the flow lines enter $\mathcal{R}$ vertically from above along $\gamma_A$ and horizontally from the right along $\gamma_B$, no flow lines can leave $\mathcal{R}$ so $A,B$ are bounded and can only reach $0$ when $(A,B)=(0,0)$.  In $\mathcal{R}$, $\dot{A}<0$ and $\dot{B}<0$ and  there are no periodic orbits.  We conclude that all flow lines starting in $\mathcal{R}$ must converge to $(0,0)$.   We again note as in Remark \ref{rmk:Ric.collapse.1} that $M$ cannot collapse to a point along the rescaled Ricci flow.  The discussion in Remark \ref{rmk:Ric.collapse.1} then implies that the point $(0,0)$ corresponds to the fibres of the fibration \eqref{eq:M.Z.pi} collapsing, even in the rescaled Ricci flow, so that $M^7$ collapses to the base $N^4$ (with its Einstein metric).  Since $\mathcal{R}$ is open and has $(1/5,1/5)$ on its boundary, this complete the proof.
\end{proof}


\noindent Theorem \ref{thm:Ric.2} proves Theorem \ref{thm:Ric} in the Introduction.

\begin{remark}
    By dynamical systems theory, there is a 1-dimensional stable manifold for the squashed Einstein metric within our rescaled Ricci flow ansatz.  We can discern this stable manifold from the plots in Figure \ref{fig:Ric}.  It might be interesting to
understand whether this stable manifold (or, equally, the corresponding unstable manifold)  has any geometric significance, e.g.~any special curvature properties. 
\end{remark}

\paragraph{\bf Acknowledgements.} AK would like to thank Rafe Mazzeo and his advisor Dave Morrison for useful suggestions related to this paper. JDL would like thank the Simons Laufer
Mathematical Sciences Institute (previously known as MSRI) in Berkeley, California, for hospitality during the latter stages of this project. The authors also thank the Simons Collaboration on Special Holonomy in Geometry, Analysis, and Physics for support and many interesting talks which inspired this research.

 \end{document}